\documentclass[12pt]{article}
\usepackage{array}
\usepackage{xcolor}
\usepackage[hidelinks]{hyperref}
\usepackage{amsmath}
\usepackage{amsfonts}
\usepackage{amsthm}
\usepackage{calrsfs}
\usepackage{enumerate}
\usepackage{geometry}

\usepackage{amssymb,latexsym}

\newcommand{\SL}{\mathrm{SL}}

\newcommand{\FF}{\mathbb F}

\newcommand{\SSS}{\mathbb S}
\newcommand{\KK}{\mathbb K}

\newcommand{\cL}{\mathcal L}

\newcommand{\cP}{\mathcal P}

\newcommand{\cC}{\mathcal C}

\newcommand{\PG}{\mathrm{PG}}
\newcommand{\vep}{\varepsilon }
\newcommand{\PGL}{\mathrm{PGL}}

\newcommand{\GL}{\mathrm{GL}}

\newcommand{\Aut}{\mathrm{Aut}\,}
\newcommand{\Tr}{\mathrm{Tr}}
\newcommand{\rank}{\mathrm{rank}\,}

\newtheorem{theorem}{Theorem}[section]
\newtheorem{lemma}[theorem]{Lemma}
\newtheorem{corollary}[theorem]{Corollary}
\newtheorem{prop}[theorem]{Proposition}

\theoremstyle{definition}
\newtheorem{remark}{Remark}

\newtheorem{definition}[theorem]{Definition}
\newcommand{\cH}{\mathcal H}
\newcommand{\cM}{\mathcal M}
\newcommand{\cX}{\mathcal X}

\newcommand{\bee}[1]{\pmb{e}_{#1}}
\newcommand{\bet}[1]{\pmb{\eta}_{#1}}


\newcommand{\segreG}{\Gamma}
\newcommand{\lrootG}{\bar{\Gamma}}

\author{I.~Cardinali, L.~Giuzzi}
\title{Linear codes arising from the point-hyperplane geometry --- Part II: the twisted embedding}
\begin{document}
\maketitle
\begin{abstract}
Let $\lrootG$ be the point-hyperplane geometry of a projective space $\PG(V),$ where $V$ is a $(n+1)$-dimensional vector space over a finite field $\FF_q$ of order $q.$
Suppose that $\sigma$ is an automorphism of $\FF_q$  and consider the projective embedding $\vep_{\sigma}$ of $\lrootG$ into the projective space $\PG(V\otimes V^*)$ mapping the point $([x],[\xi])\in \lrootG$ to the projective point represented by the pure tensor $x^{\sigma}\otimes \xi$, with $\xi(x)=0.$
In~\cite{parte1}, we focused on the case $\sigma=1$ and we studied the projective code arising from the projective system $\Lambda_1=\vep_{1}(\lrootG).$ Here we focus on the case $\sigma\not=1$ and we investigate the linear code $\cC( \Lambda_{\sigma})$
arising from the projective system $\Lambda_{\sigma}=\vep_{\sigma}(\lrootG).$
  In particular, after having verified that
$\cC( \Lambda_{\sigma})$ is a minimal code, we determine its parameters,
its minimum distance as well as its automorphism
group.
We also give a (geometrical) characterization of its minimum and second lowest weight codewords and determine its maximum weight when $q$ and $n$ are
both odd.
\end{abstract}

\section{Introduction}
Let $V$ be a vector space of dimension $k+1$ over a finite field $\FF_q$ of order $q$. It is well known that given a set $\Omega$ of $N$ points spanning $\PG(V)$
(a \emph{projective system of $\PG(V)$}) it is possible to construct a projective code $\cC(\Omega)$ of length $N$
and dimension $k+1$ by taking as generator matrix for $\cC(\Omega)$
a matrix $G$ whose columns are vector representatives of the points
of $\Omega$; see~\cite{TVZ}.
This approach, pioneered  by MacWilliams
in her Ph.D. Thesis~\cite[Lemma 1.5]{MacWilliams1962}, establishes a nice interplay between the properties of the code and the geometry of the projective system itself; for instan9ce, the minimum distance of the code, its higher Hamming weights, its minimality as well as its automorphism group can all be
studied by looking at the geometric properties of $\Omega$.

In a series of papers, we have been interested in studying codes
arising from embeddings of point-line geometries; see~\cite{Cardinali2016b,
  Cardinali2018, Cardinali2018c, parte1, Cardinali2016a}.

In particular, in~\cite{parte1} we considered the linear code arising from a special subvariety $\Lambda_1$
of the Segre variety $\Lambda$ of $\PG(V\otimes V^*)$, where $V^*$ is the dual space of $V$, consisting of all pure tensors $x\otimes \xi\in V\otimes V^*$ such that $\xi(x)=0.$

The Segre variety  is an important algebraic variety
arising from the Segre embedding $\vep$
of the Segre geometry
$\segreG$ of $\PG(V\otimes V^*).$ Accordingly, $\Lambda=\vep(\Gamma).$ The variety $\Lambda$ has been extensively studied, as it corresponds to
the determinantal variety consisting of all matrices
of rank $1.$
The code arising from the projective system of the $\FF_q$-rational
points of $\Lambda$ has been
investigated by Beleen, Ghorpade and Hasan in 2015
\cite{BGH}.

In~\cite{parte1} we introduced and studied  a family of codes associated to the projective system $\Lambda_1$ defined by the image under the  \emph{Segre embedding} of the point-hyperplane geometry of $\PG(V).$
Keeping the same notation as in~\cite{parte1}, we denote by $\lrootG$ the  point-hyperplane geometry of $\PG(V).$ Then $\lrootG$ is a {\it geometric  hyperplane} of $\segreG$ and the \emph{Segre embedding} $\bar{\vep}$ of $\lrootG$ maps any point $([x],[\xi])$ of $\lrootG$ into the point $[x\otimes \xi]\in \PG(M^0_{n+1}(q)),$ where $M^0_{n+1}(q)$ is  a hyperplane of $V\otimes V^*.$ According to this notation, we have $\Lambda_1:=\bar{\vep}(\lrootG).$

This paper can be considered as the natural continuation of~\cite{parte1}:
now we investigate the linear code arising from another relevant subvariety $\Lambda_{\sigma}$ of the Segre variety defined again as the image of the point-hyperplane geometry $\lrootG$ but under the so-called {\it twisted embedding} of $\lrootG$, which turns out to be  a projective embedding not isomorphich to the Segre embedding (if $\sigma\not=1$).

The existence of the twisted embedding assumes the existence of non-trivial automorphisms of $\FF_q$ (see Section~\ref{prelim}) and its definition is a slight modification of the definition of the Segre embedding. Indeed, suppose $\sigma\in\Aut(\FF_q),\,\sigma\not= 1.$ Then the \emph{twisted embedding} $\bar{\vep}_{\sigma}$ of $\lrootG$ maps any point $([x],[\xi])$ of $\lrootG$ into the point $[x^\sigma\otimes \xi]\in \PG(V\otimes V^*).$ When $\sigma=1$, $\vep_1$ is just the Segre embedding. Accordingly,
\[\Lambda_{\sigma}:=\bar{\vep}_{\sigma} (\lrootG)=\{[x^\sigma\otimes \xi]\colon  [x]\in\PG(V), [\xi]\in\PG(V^*) \text{ and } [x]\in [\xi]\}\]
In the present paper  we study the code $\cC(\Lambda_{\sigma})$
arising from $\Lambda_{\sigma}.$

\subsection{Main results}
In this paper,
after introducing the family of codes $\cC(\Lambda_{\sigma})$ arising from the projective system $\Lambda_{\sigma}$ defined in the Introduction, we will determine their parameters and  their  automorphism group.
We will also prove that $\cC(\Lambda_{\sigma})$ is a minimal code and characterize its minimum and second lowest weight codewords.
More in detail, the main results  are the following.
\begin{theorem} \label{main thm 3}
 Suppose $V$ is a $(n+1)$-dimensional vector space over $\FF_q$ and $1\not=\sigma\in\Aut(\FF_q).$ Let $\Lambda_{\sigma}$ be the projective system of $\PG(V\otimes V^*)$ whose points are represented by pure tensors $x^\sigma \otimes \xi$ such that $\xi(x)=0.$ The  $[N_{\sigma},k_{\sigma},d_{\sigma}]$-linear code $\cC(\Lambda_{\sigma})$ associated  to $\Lambda_{\sigma}$ has parameters
\[N_{\sigma}=\frac{(q^{n+1}-1)(q^n-1)}{(q-1)^2},\quad k_{\sigma}=n^2+2n+1, \]
\[ d_{\sigma}=\begin{cases}
	q^3-\sqrt{q}^3 & \text{ if }  \sigma^2=1 \text{ and } n=2, \\
	q^{2n-1}-q^{n-1} & \text{ if }  \sigma^2\neq 1 \text{ or } n>2.
\end{cases}
\]
\end{theorem}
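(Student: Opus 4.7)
The length $N_\sigma=|\Lambda_\sigma|$ counts incident pairs $([x],[\xi])$ in $\lrootG$: there are $(q^{n+1}-1)/(q-1)$ points $[x]$, each incident with $(q^n-1)/(q-1)$ hyperplanes $[\xi]$. For the dimension, the plan is to show $\Lambda_\sigma$ spans all of $V\otimes V^*$. Identifying $(V\otimes V^*)^*$ with $\mathrm{End}(V)$ via $A\mapsto(x\otimes\xi\mapsto \xi(Ax))$, a functional vanishing on $\Lambda_\sigma$ corresponds to an $A$ with $\xi(Ax^\sigma)=0$ whenever $\xi(x)=0$, equivalently $Ax^\sigma\in\langle x\rangle$ for every $x\in V$. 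Testing on two independent vectors forces a single common scalar $\lambda$ with $Ax^\sigma=\lambda x$ for all $x$; since the left-hand side is $\sigma$-semilinear and the right-hand side linear, the hypothesis $\sigma\neq 1$ forces $\lambda=0$ and hence $A=0$. Thus $k_\sigma=(n+1)^2=n^2+2n+1$.

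Next, I parameterise codewords by matrices and derive a weight formula. For $A\in\mathrm{End}(V)\setminus\{0\}$, the codeword $c_A$ has weight equal to the number of flags $([x],[\xi])\in\lrootG$ with $\xi(Ax^\sigma)\neq 0$. Splitting the complement according to whether $Ax^\sigma\in\langle x\rangle$ (one condition on $\xi$) or not (two independent conditions) gives
\[\wt(c_A)=q^{n-1}\!\left(\frac{q^{n+1}-1}{q-1}-|S_A|\right),\qquad S_A:=\{[x]\in\PG(V):Ax^\sigma\in\langle x\rangle\}.\]
Computing $d_\sigma$ therefore reduces to maximising $|S_A|$ over nonzero $A$, which is the geometric heart of the problem.

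For a lower bound on $\max_A|S_A|$, a rank-one choice $A=v\otimes\eta$ with $\eta(v)\neq 0$ suffices: $\{Ax^\sigma=0\}$ is the hyperplane $\{\eta=0\}$, and the unique point outside it with $Ax^\sigma\in\langle x\rangle$ is $[v]$, so $|S_A|=(q^n-1)/(q-1)+1$ and hence $d_\sigma\le q^{2n-1}-q^{n-1}$. For the matching upper bound, the key observation is that if $[x_1],[x_2]\in S_A$ with $Ax_i^\sigma=\lambda_i x_i\neq 0$ and $x_1,x_2$ independent, then $[x_1+x_2]\in S_A$ iff $\lambda_1=\lambda_2$; moreover the locus $E_\lambda=\{x:Ax^\sigma=\lambda x\}$ is an additive subgroup which, for $\lambda\neq 0$, is a vector space over the fixed field $\FF_q^\sigma$ rather than over $\FF_q$. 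Analysing how these subfield structures, together with the linear kernel $E_0$, can cover a large portion of $\PG(V)$ is what bounds $|S_A|$ by $(q^n-1)/(q-1)+1$ outside of one very specific exceptional configuration.

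That exceptional configuration is precisely $\sigma^2=1$ and $n=2$, where $\FF_q^\sigma=\FF_{\sqrt{q}}$ gives rise to Baer subgeometries in $\PG(2,q)$. Here taking $A$ to be the identity yields $S_A=\{[x]:x^\sigma\in\langle x\rangle\}=\PG(2,\sqrt{q})$, a Baer subplane, so $|S_A|=q+\sqrt{q}+1$ and $d_\sigma\le q^3-\sqrt{q}^3$. The matching upper bound reduces, via the eigenvalue-matching closure above, to the classical fact that any sufficiently large subset of $\PG(2,q)$ closed under such a rule lies in a Baer subplane, capping $|S_A|$ at $q+\sqrt{q}+1$. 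Showing that analogous subfield enlargements cannot occur when either $\sigma^2\neq 1$ or $n>2$ (for higher $n$, the Baer subgeometry $\PG(n,\sqrt{q})$ is strictly smaller than $(q^n-1)/(q-1)+1$, so the rank-one bound dominates) is the most delicate step and is where I anticipate the principal technical effort.
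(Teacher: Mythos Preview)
Your approach is essentially the paper's, dualized: the paper parametrises codewords by matrices $M$ and counts hyperplanes $[\xi]$ with $[\xi^\sigma]\subseteq[\xi M]$, obtaining the identical weight formula with $\theta_M$ in place of your $|S_A|$. Your dimension argument is a clean self-contained version of what the paper only asserts by reference.

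Two small corrections. First, for the rank-one lower bound with $A=v\otimes\eta$, the condition that $[v]$ lies outside the hyperplane $\{[x]:\eta(x^\sigma)=0\}$ is $\eta(v^\sigma)\neq0$, not $\eta(v)\neq0$. Second, your eigenvalue-matching observation is literally true for the single point $[x_1+x_2]$, but since $\lambda_i$ depends on the chosen representative (rescaling $x_i\mapsto\alpha x_i$ sends $\lambda_i\mapsto\alpha^{1-\sigma}\lambda_i$), the useful consequence is that a line through two ``nonzero-eigenvalue'' points of $S_A$ meets $S_A$ in at most $s+1$ points, where $\FF_s=\FF_q^\sigma$---exactly the line bound the paper proves.

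Where you anticipate the principal effort, the paper organises the upper bound cleanly as follows. From the line bound it proves by induction that for \emph{invertible} $M$ any $r$-dimensional subspace contains at most $(s^r-1)/(s-1)$ solutions. For general $M$ of rank $r$, the solution set decomposes as the projective kernel (size $(q^{n+1-r}-1)/(q-1)$) together with solutions in a complement where $M$ acts invertibly, yielding
\[
\theta_M\le m(r):=\frac{q^{n+1-r}-1}{q-1}+\frac{s^r-1}{s-1}.
\]
The sequence $m(1),\dots,m(n+1)$ satisfies $m(r+1)-m(r)=s^r-q^{n-r}$, hence is convex, so its maximum lies at an endpoint. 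Comparing $m(1)=\tfrac{q^n-1}{q-1}+1$ with $m(n+1)=\tfrac{s^{n+1}-1}{s-1}$ shows $m(1)$ wins except precisely when $n=2$ and $q=s^2$ (i.e.\ $\sigma^2=1$), where $m(3)=s^2+s+1$ wins and is attained by $M=I$. This rank-stratified convexity argument is a tidy substitute for the case analysis you were bracing for.
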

Suppose $a\in\FF_q$ and $\sigma\in\Aut(\FF_q)$.
Denote by $\KK=\FF_s$ the fixed field of $\sigma$. 
When $\sigma\neq1$ and $\sigma^2=1$ we put $s=\sqrt{q}$ and we define the {norm} of $a$
as $N(a):=a^{s+1}$.

In the next theorem we characterize the minimum, the second lowest and some of the maximum weight codewords in terms of geometrical hyperplanes of
the geometry $\lrootG.$

We refer to Section~\ref{prelim} for the concept of {\it geometrical hyperplane of $\lrootG$ arising from an embedding} and to Section~\ref{hyperplanes} for the definition and description of the hyperplanes of $\lrootG$ mentioned in the following theorem.
\begin{theorem}
  \label{main thm 4}
	Let $\cC(\Lambda_{\sigma})$ be the code as defined in Theorem~\ref{main thm 3}.  The following hold.
        \begin{enumerate}
        \item\label{c4t0} The code $\cC(\Lambda_{\sigma})$ is minimal.
  \item\label{c4t1}
    If $n=2$ and $\sigma^2=1$, $\sigma\neq1$
    then the minimum weight codewords of $\cC(\Lambda_{\sigma})$
    have weight $q^3-\sqrt{q}^3$ and
    correspond
    to the hyperplanes of $\lrootG$ associated to matrices $M$ for which there exist three linearly independent vectors
    $\xi_1,\xi_2,\xi_3
    \in V^*$ and
    $\alpha,\beta,\gamma\in\FF_q$ with $N(\alpha)=N(\beta)=N(\gamma)$
    such that
    \[ \xi_1M=\alpha\xi_1^{\sigma},\qquad
      \xi_2M=\beta\xi_2^{\sigma},\qquad
      \xi_3M=\gamma\xi_3^{\sigma}.
    \]
   \item\label{c4t2}
  If $n>2$ or $\sigma^2\neq1$ then the minimum weight codewords of $\cC(\Lambda_{\sigma})$
    have weight $q^{2n-1}-q^{n-1}$ and  correspond
    to quasi-singular, non-singular hyperplanes of $\lrootG.$ The second lowest weight codewords of $\cC(\Lambda_{\sigma})$ have weight
    $q^{2n-1}$ and  correspond to singular hyperplanes
    of $\lrootG$.
  \item\label{pt4-13} If both $q$ and $n$ are odd, then the maximum weight codewords of $\cC(\Lambda_{\sigma})$
    have weight $q^{n-1}(q^{n+1}-1)/(q-1)$. Every semi-standard spread type hyperplane of $\lrootG$ (in
    particular $\sigma^2=1$ and $n$ is odd) is associated to a maximum weight codeword of $\cC(\Lambda_{\sigma})$. 
   \end{enumerate}
\end{theorem}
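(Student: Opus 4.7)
The plan is to translate the problem into matrix form and then count. Via the pairing $\langle x^{\sigma}\otimes\xi, M\rangle = \xi M x^{\sigma}$, codewords of $\cC(\Lambda_\sigma)$ are parametrised (up to a scalar) by non-zero matrices $M\in M_{n+1}(\FF_q)$, and the weight of $c_M$ equals $N_{\sigma} - |\lrootG \cap H_M|$, where $H_M$ is the geometrical hyperplane of $\lrootG$ associated with $M$, consisting of the flags $([x],[\xi])$ with $\xi(x)=0$ and $\xi M x^{\sigma}=0$. All four parts then reduce to controlling $|\lrootG \cap H_M|$ according to the algebraic structure of $M$.

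For item~(\ref{c4t0}) I would use the standard characterisation of minimality in terms of cutting projective systems: $\cC(\Lambda_\sigma)$ is minimal if and only if $\langle \Lambda_\sigma \cap H\rangle = H$ for every hyperplane $H$ of $\PG(V\otimes V^*)$. My plan is to verify this directly by exhibiting, for each non-zero $M$, enough pure tensors $x^{\sigma}\otimes\xi\in \Lambda_\sigma\cap H_M$ to span $H_M$: for every $[\xi]\in\PG(V^*)$ the condition $\xi M x^{\sigma}=0$ cuts out either a sub-hyperplane or the whole of the hyperplane $[\xi]$ of $\PG(V)$, so there are many admissible $[x]$, and by varying $[\xi]$ one obtains a spanning family.

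For items~(\ref{c4t1}) and~(\ref{c4t2}) I would classify matrices $M$ by their $\sigma$-\emph{semi-eigenstructure}, that is, by the set of $[\xi]\in\PG(V^*)$ satisfying $\xi M = \alpha \xi^{\sigma}$ for some $\alpha\in\FF_q$. Fixing $[\xi]$, the number of admissible $[x]\in[\xi]$ drops from $(q^n-1)/(q-1)$ to $(q^{n-1}-1)/(q-1)$ exactly when $\xi$ is not a $\sigma$-semi-eigenvector of $M$, and summing yields $|\lrootG\cap H_M|$ as a function of this count. Using the descriptions of the hyperplane types given in Section~\ref{hyperplanes}, a direct enumeration per type would yield $|\lrootG\cap H_M|=N_\sigma - q^{2n-1}$ for singular hyperplanes and $N_\sigma - (q^{2n-1}-q^{n-1})$ for quasi-singular, non-singular ones, giving the claimed weights in the generic case. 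For the sporadic case $n=2$ and $\sigma^2=1$, three linearly independent $\sigma$-semi-eigenvectors with common norm $N(\alpha_i)$ force $M$ to encode (up to twist) a non-degenerate Hermitian form on $V$ over $\KK=\FF_s$, and the known count of isotropic points of a Hermitian curve in $\PG(2,q)$ then recovers the exceptional weight $q^3-s^3$.

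For item~(\ref{pt4-13}) the plan is twofold. First, when $q$ and $n$ are both odd I would compute $|\lrootG\cap H_M|$ directly for a semi-standard spread type hyperplane: the underlying spread partitions $\PG(V)$ into lines on which $M$ acts compatibly, and summing the per-line contributions gives $|\lrootG\cap H_M|=N_\sigma - q^{n-1}(q^{n+1}-1)/(q-1)$. Second, to conclude that this is the maximum weight I would establish the matching lower bound $|\lrootG\cap H_M|\ge N_\sigma - q^{n-1}(q^{n+1}-1)/(q-1)$ for every non-zero $M$ via a case analysis based on the $\sigma$-semi-eigen structure of $M$ over $\KK$. The main obstacle throughout is this last sharp lower bound, together with the Hermitian count in the sporadic case of~(\ref{c4t1}): both require a delicate handling of the interplay between $\sigma$-semilinearity and the incidence of $\lrootG$, and the extremality argument for~(\ref{pt4-13}) --- rather than the mere exhibition of one attainable value --- is the main technical hurdle.
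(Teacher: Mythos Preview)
Your reduction to counting $\theta_M=|\{[\xi]\in\PG(V^*):[\xi^\sigma]\subseteq[\xi M]\}|$ matches the paper's Lemma~\ref{pesi}, and your plan for~(\ref{c4t2}) is essentially the paper's Theorem~\ref{mw:n>2}. For~(\ref{c4t0}) the paper uses a shorter abstract route (Proposition~\ref{c:min} plus Theorem~\ref{msub}: all geometric hyperplanes of $\lrootG$ are maximal subspaces), rather than a direct spanning argument; your approach could be made to work but is more laborious.

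There are, however, two genuine gaps. In~(\ref{c4t1}) the Hermitian-curve interpretation is wrong: a Hermitian curve in $\PG(2,s^2)$ has $s^3+1$ absolute points, not the required $\theta_M=s^2+s+1$. The three $\sigma$-semi-eigenvectors with equal norms do not make $M$ encode a Hermitian form; rather, they force the semilinear collineation $[\zeta]\mapsto[\zeta^{\sigma^{-1}}M]$ to fix a \emph{Baer subplane} $\PG(2,s)$ pointwise, and the paper obtains $\theta_M=s^2+s+1$ by a direct enumeration over the triangle $\bigcup_{i<j}\langle[\xi_i],[\xi_j]\rangle$ and its complement (Theorem~\ref{mw:n2s1}). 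In~(\ref{pt4-13}) you have the difficulty inverted: the lower bound $|\lrootG\cap H_M|\ge N_\sigma - q^{n-1}(q^{n+1}-1)/(q-1)$ is exactly $\theta_M\ge0$, which is tautological; the real work is \emph{achievability}, i.e.\ producing an $M$ with $\theta_M=0$. Semi-standard spread hyperplanes do this only when $\sigma^2=1$, yet the statement covers all $\sigma\neq1$ with $q,n$ odd. The paper handles the general case via the field model $V^*\cong\FF_{q^{n+1}}$: for a primitive $\omega$, the map $[\zeta]\mapsto[\zeta^{s^j}\omega]$ is fixed-point free whenever $\gcd\bigl((q^{n+1}-1)/(q-1),\,s^j-1\bigr)>1$, which holds when $q$ and $n$ are both odd (Corollary~\ref{co-odd}). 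Without this construction your argument for~(\ref{pt4-13}) does not cover the case $\sigma^2\neq1$.
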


It is interesting to point out that the minimum and the second lowest weight codewords of $\cC(\Lambda_{\sigma})$ are associated to the same families of hyperplanes of $\lrootG$ as the minimum and the second lowest weight codewords of the code $\cC(\Lambda_1)$ studied in~\cite{parte1}; likewise the
argument leading to the minimality of $\cC(\Lambda_{\sigma})$ depend
only on $\lrootG$.
This suggest that  the properties of the geometry   $\lrootG$ play a crucial role in giving information on the structure of the codes associated to it, regardless of
the way $\lrootG$ is embedded.

We refer to the beginning of Section~\ref{twisted code} for a description of the codewords $c_M$ of $\cC(\Lambda_{\sigma}).$ 
\begin{theorem}
  \label{main thm 5}
  The group $\GL(n+1,q)$ acts as an automorphism group of the
  code $\cC(\Lambda_{\sigma})$ via the action
  \[ \varrho(g): c_M\to c_{g^{-1}Mg^{\sigma}}. \]
  The kernel of this action is the subgroup of all scalar matrices
  of the form $K_{\sigma}:=\{ \alpha I: \alpha\in\FF_s \}$.


\end{theorem}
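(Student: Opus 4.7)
The plan is to derive the formula for $\varrho$ by dualizing the natural action of $\GL(n+1,q)$ on the projective system $\Lambda_{\sigma}$, and then compute the kernel via a short matrix argument. Throughout, I identify $V\otimes V^{*}$ with the space $M_{n+1}(\FF_q)$ of $(n+1)\times(n+1)$ matrices via the outer product $x\otimes\xi\leftrightarrow x\xi$ (with $x$ a column and $\xi$ a row), so that $c_M([x^{\sigma}\otimes\xi])=\xi M x^{\sigma}=\Tr(Mx^{\sigma}\xi)$.

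First I would set up the action on the projective system. The group $\GL(n+1,q)$ acts on $\PG(V)$ by $[x]\mapsto[gx]$ and on $\PG(V^{*})$ by $[\xi]\mapsto[\xi g^{-1}]$, preserving the pairing $\xi(x)$ and hence the incidence defining $\lrootG$. Under $\bar{\vep}_{\sigma}$, the point $[x^{\sigma}\otimes\xi]$ is carried by $g$ to $[(gx)^{\sigma}(\xi g^{-1})]=[g^{\sigma}(x^{\sigma}\xi)g^{-1}]$, so $\GL(n+1,q)$ preserves $\Lambda_{\sigma}$ and acts on its matrix realization by $[M]\mapsto[g^{\sigma}Mg^{-1}]$. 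This is a linear automorphism of $V\otimes V^{*}$ that permutes the points of the projective system, hence induces a code automorphism $\varrho(g)$ of $\cC(\Lambda_{\sigma})$.

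Next I would dualize. By Theorem~\ref{main thm 3} the code dimension equals $k_{\sigma}=(n+1)^2=\dim(V\otimes V^{*})$, so $\Lambda_{\sigma}$ spans $V\otimes V^{*}$ and the map $M\mapsto c_M$ is a linear bijection between $M_{n+1}(\FF_q)$ and $\cC(\Lambda_{\sigma})$. The pullback identity
\[
c_M(g^{\sigma}Ag^{-1})=\Tr(Mg^{\sigma}Ag^{-1})=\Tr((g^{-1}Mg^{\sigma})A)=c_{g^{-1}Mg^{\sigma}}(A)
\]
then yields the stated formula $\varrho(g)\colon c_M\mapsto c_{g^{-1}Mg^{\sigma}}$. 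For the kernel, by the injectivity just noted $\varrho(g)$ fixes every codeword if and only if $g^{-1}Mg^{\sigma}=M$, equivalently $Mg^{\sigma}=gM$, for every $M\in M_{n+1}(\FF_q)$. Putting $M=I$ forces $g^{\sigma}=g$, hence $g$ has entries in $\FF_s$; the residual requirement $gM=Mg$ for all $M$ then forces $g=\alpha I$ for some $\alpha\in\FF_s$ by Schur's lemma, and conversely every such matrix acts trivially. This identifies $K_{\sigma}$.

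The only real obstacle is bookkeeping of conventions: because the action on points is from the left while the action on codewords is by pullback, the composition law naturally yields $\varrho(g_1 g_2)=\varrho(g_2)\varrho(g_1)$, so strictly speaking $\varrho$ is an anti-homomorphism (or equivalently a right action). This has no effect on either the image or the kernel, so I would remark on it in passing. Aside from this formality, the substance reduces to the trace manipulation above and the Schur-lemma step, both of which are short.
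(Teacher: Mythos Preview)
Your proposal is correct and follows essentially the same route as the paper: both use the cyclic trace identity $\Tr(X_ig^{-1}Mg^{\sigma})=\Tr(g^{\sigma}X_ig^{-1}M)$ to show that $c_{g^{-1}Mg^{\sigma}}$ is obtained from $c_M$ by the monomial transformation coming from the action $[X]\mapsto[g^{\sigma}Xg^{-1}]$ on $\Lambda_{\sigma}$, and both compute the kernel by the same $M=I$ plus Schur-type argument. Your remark that $\varrho$ is strictly an anti-homomorphism (a right action) is a valid point that the paper leaves implicit.
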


\subsection{Organization of the paper}
Since we are considering codes arising from  different embeddings of the same geometry $\lrootG$,  this work can be regarded as a continuation of~\cite{parte1}.
Consequently,
some of the set-up (as well as the notation) of the present paper
 overlaps with those of~\cite{parte1}, even if the specific
 arguments and results about the code turn out to be quite different.
 In Section~\ref{Sec 1} we shall limit the presentation of the
 topics otherwise
 introduced in~\cite[Section 2]{parte1}, without providing proofs
 or extensive
 discussion (for which we refer the reader to the previous paper).

The organization of the paper is as follows.
In Section~\ref{notation} we set the notation to be used throughout
the paper and recall the notion of \emph{saturation form} for
the space $M_{n+1}(q)$. In Section~\ref{prelim} we  recall the basics about
point-line geometries and their embeddings; in Section~\ref{pmcodes}
we recall the basics about codes from projective systems and minimal
codes; we refer to~\cite{survey,parte1} for more details.
Sections~\ref{long root geometry} and~\ref{hyperplanes} are dedicated to the
point-hyperplane geometry and its hyperplanes; we also discuss here
its natural and its twisted embedding. In Section~\ref{twisted code} we will prove our main theorems.
Theorem~\ref{main thm 3} will be proved in Subsection~\ref{proof of main thm 3}, Theorem~\ref{main thm 4} will be proved in Subsection~\ref{proof of main thm 4} and Theorem~\ref{main thm 5} will be proved in Subsection~\ref{proof of main thm 5}.

\section{Notation and Basics}\label{Sec 1}
\subsection{Notation}
\label{notation}
Let $V=V(n+1,\FF_q)$ be a $(n+1)$-dimensional vector space over $\FF_q$ and $V^*$ its dual. Henceforth we always assume that $E = (\bee{i})_{i=1}^{n+1}$ is a given fixed basis of $V$
and $E^*=(\bet{i})_{i=1}^{n+1}$ is its dual basis in $V^*$.
Vectors will always be regarded as expressed by their components
with respect to these bases. We also adopt the convention that
the elements of $V$ are denoted by roman letters
and are column vectors, while the elements of $V^*$ are
row vectors denoted by Greek letters.

Given an automorphism $\sigma$ of $\FF_q$ and $x=\sum_{i=1}^n\bee{i}x_i\in V$,
$\xi=\sum_{i=1}^n\bet{i}\xi_i\in V^*$, we put
$x^{\sigma}=\sum_{i=1}^n\bee{i}x_i^{\sigma}$ and
$\xi^{\sigma}=\sum_{i=1}^n\bet{i}\xi_i^{\sigma}$.


It is well known that the space $V\otimes V^*$ can be canonically
identified with the space $M_{n+1}(q)$ of all $(n+1)\times(n+1)$ matrices with
entries in $\FF_q$.
More in detail,
the elements $E\otimes E^*=\{\bee{i}\otimes \bet{j}\}_{1\leq i,j\leq n+1}$ form a basis of $V\otimes V^*$. If we map $\bee{i}\otimes\bet{j}$ to the elementary matrix $\bee{ij}$ whose only non-zero entry is $1$ in position $(i,j)$, we
see that this gives the isomorphism $\phi\colon V\otimes V^* \rightarrow M_{n+1}(q)$ described by
\[ x\otimes\xi \in V\otimes V^*\mapsto \begin{pmatrix} x_1\\\vdots\\x_{n+1} \end{pmatrix}
  \begin{pmatrix} \xi_1 & \dots & \xi_{n+1} \end{pmatrix}\in M_{n+1}(q). \]
Thus, the tensor $x \otimes \xi\in V\otimes V^*$ can be
regarded as the usual matrix
product  $x \xi$ between a column and a row vector and  for a matrix $M \in M_{n+1}(q)$, the product $\xi M x$ is the scalar obtained as the product of the row $\xi$ times
$M$ times the column $x$.
Recall that a matrix $M$ can be written as $x\xi$ (namely, it is
a pure tensor of the form $x\otimes\xi$) if and only if it has rank $1$.

If $M:=\sum_{i,j=1}^{n+1} m_{ij}(\bee{i}\otimes \bet{j})$ is
any element of $V\otimes V^*$, then the \emph{trace} of $M$ is defined
as $\Tr(M):=\sum_{i,j=1}^{n+1}m_{ij}\bet{i}(\bee{j})=\sum_{i=1}^{n+1}m_{ii}$.
It can be seen that the trace is independent from the choice of
the bases $E$ and $E^*$; indeed, for pure tensors $x\otimes\xi$ the
trace of the corresponding matrix corresponds to the value of $\xi(x)$.

For any two matrices $X,Y\in M_{n+1}(q)$ we can consider \emph{the saturation form}
$f \colon  M_{n+1}(q) \times  M_{n+1}(q) \rightarrow \FF_q$ of $M_{n+1}$
 given by
\begin{equation}\label{sat form}
  f(X, Y ) = \Tr(XY), \,\,\, \forall\,\, X, Y \in  M_{n+1}(q),
\end{equation}
where $XY$ is the usual row-times-column product.
It is immediate to see that this form is bilinear, symmetric and non-degenerate;
furthermore it does not depend on the choice of the basis of $V$; for more
detail see~\cite{Pasini24}.

Let $\perp$ be the orthogonality relation associated to $f.$ Since $f$ is non-degenerate, the hyperplanes of $M_{n+1}(q)$ are the orthogonal spaces
\[M^{\perp}=\{X\in M_{n+1}(q)\colon \Tr(XM)=0 \},\]
 for $M \in  M_{n+1}(q) \setminus\{O\}$ and,
for two matrices $M,N \in M_{n+1}(q) \setminus\{O\}$, we have $M^{\perp} = N^{\perp}$ if and only
if $M$ and $N$ are proportional.


The pure tensors in the orthogonal space of a matrix $M$ with
respect to the saturation form admit a simple description,
as illustrated by the following proposition; see~\cite{parte1}
for the proof.
\begin{prop}\label{prop}
Let $x \in V \setminus  \{O\}$, $\xi \in V^* \setminus  \{O\}$  and $M \in  M_{n+1}(q)$. Then
$x \otimes  \xi \in M^{\perp}$ if and only if $\xi M x = 0.$
\end{prop}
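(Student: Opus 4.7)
The plan is to unpack the definition of the saturation form on pure tensors and reduce everything to the cyclic property of the trace. By the identification $\phi\colon V\otimes V^*\to M_{n+1}(q)$ described just above the statement, the pure tensor $x\otimes\xi$ corresponds to the rank-one matrix $x\xi$, written as the product of the column $x$ and the row $\xi$. So by definition of $M^\perp$ and of the saturation form $f$,
\[
x\otimes\xi\in M^\perp \iff f(x\xi,M)=\Tr\bigl((x\xi)M\bigr)=0.
\]

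The key computational step is to rewrite $\Tr((x\xi)M)$ using the cyclic invariance of the trace: $\Tr((x)(\xi M))=\Tr((\xi M)(x))$. The factor $\xi M\in V^*$ is a row vector of length $n+1$, and $x$ is a column vector of length $n+1$, so their product $\xi Mx$ is a $1\times 1$ matrix, i.e.\ a scalar. A $1\times 1$ matrix is equal to its own trace, giving
\[
\Tr\bigl((x\xi)M\bigr)=\Tr(\xi Mx)=\xi Mx.
\]
Combining this with the previous display yields $x\otimes\xi\in M^\perp$ iff $\xi M x=0$, which is exactly the statement.

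There is essentially no obstacle here: the only thing one needs to be a bit careful about is to justify the cyclic move, which works for any rectangular matrices as long as the products involved are well defined, so the fact that $x$ and $\xi$ have unequal "shapes" is irrelevant. One could alternatively verify the identity $\Tr((x\xi)M)=\xi M x$ by a direct index computation: writing $M=(m_{ij})$ one gets $(x\xi M)_{ii}=\sum_j x_i\xi_j m_{ji}$, hence $\Tr(x\xi M)=\sum_{i,j}\xi_j m_{ji} x_i=\xi M x$. Either way, the proof is a one-line consequence of the definitions, so the write-up should just make the matrix identification and the cyclic trace step explicit.
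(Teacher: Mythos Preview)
Your proof is correct and is exactly the natural argument: identify $x\otimes\xi$ with $x\xi$, apply the definition of $M^{\perp}$, and use cyclicity of the trace (or the equivalent index computation) to get $\Tr((x\xi)M)=\xi Mx$. The paper itself does not supply a proof here but refers to~\cite{parte1}; your write-up is presumably what that reference contains, since there is really only one thing to do.
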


Henceforth, we shall adopt either
the matrix notation or the tensor product notation, silently switching
between them, according to convenience.

Turning to projective spaces, let $\PG(n,q)=\PG(V)$ be the $n$-dimensional projective space defined by $V.$
When we need to distinguish between a non-zero vector $x$ of $V$ and the point
of $\PG(V)$ represented by it, we denote the latter by $[x]$. We extend
this convention to subsets of $V.$ If $X \subseteq V \setminus \{0\}$ then $[X] := \{[x] | x \in X\}$. The
same conventions will be adopted for vectors and subsets of $V^*$ and $V \otimes V^*$. In
particular, if $\xi \in V^* \setminus \{0\}$ then $[\xi]$ is the point of $\PG(V^*)$ which corresponds
to the hyperplane $[\ker(\xi)]$ of $\PG(V )$. In the sequel we shall freely take $[\xi]$ as a
name for $[\ker(\xi)].$ Accordingly, if $0\in V^*$ then $[0]:=\PG(V).$

Observe that in terms of projective spaces,
Proposition~\ref{prop} states that
the point $[x \otimes  \xi]\in\PG(V\otimes V^*)$ is contained in
the space
$[M^{\perp}]$ if and only if the point $[x]\in\PG(V)$ is contained
in $[\xi M]\in\PG(V^*)$.

\subsection{Embeddings and hyperplanes of point-line geometries }
\label{prelim}

Let $\Gamma=(\cP,\cL)$
and $\Gamma'=(\cP',\cL')$ be two point-line geometries with
pointsets $\cP$ respectively  $\cP'$ and linesets $\cL$ respectively $\cL'$
and incidence given by inclusion.
An \emph{embedding} of $\Gamma$ in $\Gamma'$ is an injective function
$\iota:\cP\to\cP'$ which maps lines in $\cL$ onto lines in
$\cL'$.

When $\Gamma'=\PG(V)$ is a projective geometry, we say that
$\vep:\Gamma\to\PG(V)$ is a \emph{projective embedding} of $\Gamma$ if
  $\vep$ is an embedding in the sense mentioned above and
  the image of $\vep$ spans $\PG(V)$.
  In this case we call \emph{dimension of $\vep$} the vector dimension
  of $V$.


A \emph{subspace} of  $\Gamma$ is a nonempty subset $\cX$ of  $\cP$ such
that every line $\ell\in\cL$ meeting $\cX$ in at least $2$ points
is fully contained in $\cX$. A subspace of $\Gamma$ is {\it maximal} if it is not properly contained in any other proper subspace of $\Gamma.$
A proper subspace $\cX$ of $\Gamma$ is
a {\it geometric hyperplane} if it meets every line in at least
$1$ point.

Given a projective embedding $\vep\colon \Gamma \rightarrow \PG(V)$
of $\Gamma$, for any projective hyperplane $W$ of $\PG(V)$, the point set $\mathcal{W}:=\vep^{-1}(W)$ is a geometric hyperplane of $\Gamma$ called  {\it the geometric hyperplane
 arising from  $W$ via $\vep$}.
Clearly, $\vep( \vep^{-1}(W)) = W\cap  \vep(\cP)$.

\subsection{Projective and minimal codes}
\label{pmcodes}
We recall that given a projective system $\Omega$ consisting of
$N$ points in $\PG(k-1,q)$, the projective code $\cC(\Omega)$ defined by
$\Omega$ is a $[N,k,d]$-linear code whose generator $G$ matrix consists of
the vector representatives of the points of $\Lambda$.

The minimum distance of $\cC(\Omega)$ is related to the maximum possible intersection of the projective system $\Omega$ with any hyperplane of
$\PG(k-1,q)$. In particular,
\[ d=N-\max_{H\in\PG(k-1,q)^*} |H\cap\Omega|. \]

Another important property of codes which has a nice geometrical counterpart is that of {\it minimality}.
We briefly recall the definition of minimal codes.

Let $\cC$ be a projective $[N,k,d]$-code. For any $c=(c_1,\dots,c_N)\in\cC$ the \emph{support} of $c$ is the set
$\mathrm{supp}(c)=\{i: c_i\neq 0\}$.

The notion of \emph{minimal codewords} has been introduced by Massey
in~\cite{Massey}.

\begin{definition}
  \label{minc}
A codeword $c\in\cC$ is \emph{minimal} if
  \[ \forall c'\in\cC:\mathrm{supp}(c')\subseteq\mathrm{supp}(c)
    \Rightarrow\exists\lambda\in\FF_q: c'=\lambda c. \]
  A code $\cC$ is \emph{minimal} if all its codewords are minimal.
\end{definition}

Obviously, all codewords with minimum weight are minimal,  however, to determine if \emph{all} codewords
satisfy Definition~\ref{minc} might be a difficult problem in general.

Minimal codes have been extensively investigated by Ashikhmin and Barg~\cite{Ashikhmin1998} who also provided a necessary condition on the weights of the codewords for the code to be minimal.

For projective codes arising from a projective system $\Omega$,  being minimal is equivalent
to ask that the projective system $\Omega$ is a so-called \emph{cutting blocking set with respect to hyperplanes}, i.e.  for any hyperplane $H$ of $\PG(\langle\Omega\rangle)$,
$ \langle H\cap\Omega\rangle=H$;
see~\cite{Alfarano2022a,Alfarano2022,Bonini2021, Davydov2011,Tang2021}.

Relying on the notion of cutting sets and on the properties of the  geometric hyperplanes of a point-line geometry $\Gamma$,  we have obtained in~\cite{survey} a sufficient condition for a code to be minimal
in terms of the maximality of the geometrical hyperplanes of $\Gamma$. We shall make use of the following characterization from~\cite{survey}.

\begin{prop}
	\label{c:min}
	Suppose that $\Gamma=(\cP,\cL)$ is a point-line geometry where every
	geometric hyperplane is a maximal subspace.
	Then the projective code
	$\cC(\vep(\Gamma))$ is minimal, for any projective embedding
	$\vep$ of $\Gamma$.
\end{prop}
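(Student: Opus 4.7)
The plan is to reduce minimality of $\cC(\vep(\Gamma))$ to the cutting blocking set condition recalled immediately above the statement: it suffices to check that, for every hyperplane $H$ of $\PG(V)$, the intersection $H\cap\vep(\cP)$ projectively spans $H$.

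Suppose, for a contradiction, that some hyperplane $H$ of $\PG(V)$ satisfies $W:=\langle H\cap\vep(\cP)\rangle\subsetneq H$, and set $\mathcal{W}:=\vep^{-1}(H)$. As recalled in Section~\ref{prelim}, $\mathcal{W}$ is a geometric hyperplane of $\Gamma$, and $\vep(\mathcal{W})=H\cap\vep(\cP)$, so $\langle\vep(\mathcal{W})\rangle=W$. In particular $\mathcal{W}\neq\cP$, hence I may pick a point $p\in\cP\setminus\mathcal{W}$. By the hypothesis that every geometric hyperplane of $\Gamma$ is a maximal subspace, the smallest subspace $\cX$ of $\Gamma$ containing $\mathcal{W}\cup\{p\}$ strictly contains $\mathcal{W}$ and is therefore forced to coincide with the whole pointset $\cP$.

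The crucial step now exploits that $\vep$ is a projective embedding: since $\vep$ sends each line of $\cL$ onto a projective line of $\PG(V)$, a straightforward induction on the construction of $\cX$ by successive addition of lines meeting the current set in at least two points yields $\vep(\cX)\subseteq\langle W,\vep(p)\rangle$. Combining this with $\cX=\cP$ gives $\vep(\cP)\subseteq\langle W,\vep(p)\rangle$. On the other hand, the strict inclusions $W\subsetneq H\subsetneq\PG(V)$ force $\cod W\geq 2$, so $\langle W,\vep(p)\rangle$ is still a proper subspace of $\PG(V)$. This contradicts the defining property of a projective embedding, namely that $\vep(\cP)$ spans $\PG(V)$, and proves the claim.

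The only technical point, which I would expect to be the main (and rather mild) obstacle, is the inductive inclusion $\vep(\cX)\subseteq\langle W,\vep(p)\rangle$: it relies on the fact that whenever a line $\ell\in\cL$ meets a set $S\subseteq\cP$ in at least two points, the projective line $\vep(\ell)$ meets $\vep(S)$ in at least two points and is therefore contained in $\langle\vep(S)\rangle$. Once this closure compatibility between subspaces of $\Gamma$ and projective spans in $\PG(V)$ is secured, the rest of the argument is simply a codimension count.
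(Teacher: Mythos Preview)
Your argument is correct. The paper does not actually prove Proposition~\ref{c:min}: it is quoted verbatim from the companion paper~\cite{survey}, with only the sentence ``Relying on the notion of cutting sets and on the properties of the geometric hyperplanes \ldots\ we have obtained in~\cite{survey} a sufficient condition \ldots'' by way of explanation. So there is no in-paper proof to compare against.

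That said, your route is exactly the one the surrounding text is pointing at: reduce to the cutting blocking set criterion $\langle H\cap\vep(\cP)\rangle=H$ and then exploit maximality of the geometric hyperplane $\mathcal{W}=\vep^{-1}(H)$. The inductive step you flag --- that the subspace closure of $\mathcal{W}\cup\{p\}$ in $\Gamma$ lands, under $\vep$, inside the projective span $\langle W,\vep(p)\rangle$ --- is sound: the transfinite construction of the generated subspace adds only points lying on lines already meeting the current set in two points, and since $\vep$ sends lines onto projective lines, each such addition stays inside the projective span. The codimension count ($W\subsetneq H\subsetneq\PG(V)$ forces $\langle W,\vep(p)\rangle\neq\PG(V)$) then gives the contradiction with $\langle\vep(\cP)\rangle=\PG(V)$. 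One small remark: you do not need to worry separately about whether $\vep(p)\in W$, since $p\notin\mathcal{W}$ already gives $\vep(p)\notin H\supseteq W$; but even without that, the dimension bound alone suffices.
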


 \medskip

A \emph{monomial transformation} of $\FF_q^N$ is an
invertible linear transformation $\FF_q^N\to\FF_q^N$ which
is described with respect to the canonical basis by the product of
a permutation matrix $P$ by a diagonal matrix $D$.
By the MacWilliams extension theorem~\cite[Theorem 1.10]{MacWilliams1962}, the group of monomial transformations of $\FF_q^N$ extends to the same
as the group of isometries of $\FF_q^N$ with respect to Hamming distance.

An \emph{automorphism} of the code $\cC(\Omega)$ (thus regarded as a subspace of $\FF_q^N$) is
an isometry of $\FF_q^N$ which maps codewords into codewords; see~\cite{MS}.
It can be shown that if $\gamma$ is an automorphism of the geometry
$\Gamma$ which lifts into a linear transformation of $\PG(\langle\vep(\Gamma)\rangle)$
via an embedding $\vep$, then $\gamma$ induces an automorphism of
$\cC(\vep(\Gamma))$.

\subsection{The point-hyperplane geometry $\lrootG$ of $\PG(V)$ and its embeddings}\label{long root geometry}
Following the notation of~\cite{parte1}, denote by $\Gamma$  the Segre geometry $\PG(V)\otimes \PG(V^*)$ whose points are all the  ordered pairs $([p],[\xi])$, where $[p]$ and $[\xi]$ are respectively a point and a hyperplane of $\PG(V)$ and $\vep\colon \Gamma \rightarrow \PG(V\otimes V^*)$ denote the Segre embedding of $\Gamma$, mapping $(x,\xi)$ to $[x\otimes\xi]$.

 As mentioned in Section~\ref{notation}, we shall always
silently identify $V\otimes V^*$ with $M_{n+1}(q)$ by the isomorphism
induced by
$(\bee{i}\otimes \bet{j})\to \bee{i}\bet{j}=:\bee{ij}$.

The \emph{point-hyperplane geometry} of $\PG(V)$ is the
geometry $\lrootG=(\cP,\cL)$ whose points are all the pairs
$(p,H)\in\PG(V)\otimes\PG(V^*)$ such that $p\in H$ and whose
lines are either of the form
$\ell_{r,H}:=\{ (p,H): p\in r \}$, where $r$ is a line of $\PG(V)$, or $\ell_{p,S}:=\{ (p,H) : S\subseteq H \}$
 where  $S$ is a subspace of $\PG(V)$ of
codimension $2$ (i.e. a line of $\PG(V^*)$).
Its linear automorphism group is
$\PGL(n+1,q)$ and it acts transitively on the points of $\lrootG$.

The geometry $\lrootG$ is also known as
the \emph{long root geometry for the
  special linear group $\SL(n+1,\FF_q)$}; see e.g.
~\cite{ILP24,Kantor1979}.

It follows from the definition of $\lrootG$ that the identity map
$\iota:\lrootG\to\segreG$ sending the point $(x,\xi)\in \lrootG$ to the same point $(x,\xi)\in \segreG$, is
an embedding of geometries.

Consequently, the map $\bar{\vep}:=\vep\circ \iota$ is a projective
embedding of $\lrootG$, called the \emph{Segre} or \emph{natural embedding} of $\lrootG$.
Note that the image of $\bar{\vep}$ spans $\PG(M^0_{n+1}(q))$
where $M^0_{n+1}(q)$ is the hyperplane of the traceless matrices of $M_{n+1}(q).$
In~\cite{parte1} we studied the code $\cC(\Lambda_1)$ where
$\Lambda_1:=\bar{\vep}(\lrootG).$

The geometry $\lrootG$ turns out to be a geometric hyperplane of the Segre geometry $\Gamma$ which is called in~\cite{Hvm24} of {\it black type.} It is shown in~\cite{Hvm24} that any geometric hyperplane of $\Gamma$
of black type corresponds to an embedding of $\lrootG$ in $\Gamma$ and
that these hyperplanes might not lie in the same orbit with respect to
the collineation group of the Segre geometry. Actually, $\lrootG=\iota(\lrootG)\subset \Gamma$ is one of these hyperplanes.

Let $\sigma$ be a non-trivial automorphism of $\FF_q.$ Consider the {\it twisted map} of $\lrootG$ defined as follows
\[\iota_{\sigma}:\lrootG\to\Gamma,\,\,(x,\xi)\mapsto (x^{\sigma},\xi).\]
Then, see~\cite{Hvm24}, $\iota_{\sigma}(\lrootG)\cong\lrootG$ is again a hyperplane of $\Gamma$ and different automorphisms of $\FF_q$ correspond to different orbits of geometric hyperplanes of $\Gamma$ under the automorphism group of $\Gamma.$
Define the {\it twisted embedding} of $\lrootG$ as
\[\bar{\vep}_{\sigma}=\vep\circ\iota_{\sigma}.\]
More explicitly, we have
 \[\bar{\vep}_{\sigma} \colon \lrootG \rightarrow \PG(M_{n+1}(q)),\,\,\bar{\vep}_{\sigma} (([x], [\xi]))=[x^{\sigma}\otimes \xi],\]
where $x^{\sigma}:=({x_i}^{\sigma})_{i=1}^{n+1}.$
The dimension of $\bar{\vep}_1:=\bar{\vep}$ is $(n+1)^2-1$, while
the dimension of $\bar{\vep}_{\sigma}$ is $(n+1)^2$, since
the image of $\lrootG$ by
means of $\bar{\vep}_{\sigma}$ spans $\PG(V\otimes V^*)$.
We put
\begin{equation}
	\label{e7}
	\Lambda_{\sigma}:=\bar{\vep}_{\sigma} (\lrootG)=\{[x^\sigma\otimes \xi]\colon  [x]\in\PG(V), [\xi]\in\PG(V^*) \text{ and } [x]\in [\xi]\}
\end{equation}

For more information on the embeddings of $\lrootG$, we refer the reader
to~\cite{Pas24}. In particular it is shown that embeddings related
to different automorphisms of $\FF_q$ are inequivalent. This being
said, most of the properties of the code $\cC(\Lambda_{\sigma})$
which will be studied in the present paper depend only on whether
$\sigma=1$ (in which case we have the Segre embedding and we refer
the reader to~\cite{parte1}) or $\sigma\neq1$.

The group $\GL(n+1,q)$ acts on the geometry $\lrootG$ as an automorphism
group by the following  action:
given $([x],[\xi])\in\lrootG$ and $g\in\GL(n+1,q)$, then
\[ ([x],[\xi])\to ([x],[\xi])^g:=([gx],[\xi g^{-1}]). \]
The kernel of the action consists exactly of the scalar
matrices; so $\PGL(n+1,q)=\GL(n+1,q)/\{\alpha I: \alpha\in\FF_q \}$
acts faithfully as a permutation group on $\lrootG$.
As $\PGL(n+1,q)$ is flag-transitive on $\PG(V)$, its action
on $\lrootG$ is transitive.
As the embedding $\bar{\vep}_{\sigma}$ is {\it homogeneous},
this action of $\GL(n+1,q)$ lifts through $\bar{\vep}_{\sigma}$ to an automorphism subgroup of $\PG(M_{n+1}(q))$ as follows
\begin{multline}
  \label{action-gl}
\bar{\vep}_{\sigma}( ([x],[\xi])^g)=
\bar{\vep}_{\sigma}( ([gx],[\xi g^{-1}]))=[g^{\sigma}x^{\sigma}\otimes \xi g^{-1}]= \\
[ g^{\sigma}(x^{\sigma}\xi)g^{-1}]=
g^{\sigma}\bar{\vep}_{\sigma}( ([x],[\xi] ) g^{-1}.
\end{multline}
In particular, extending the action to all of $M_{n+1}(q)$ we put
\[ M^g:=g^{\sigma}Mg^{-1},\qquad\qquad \forall M\in M_{n+1}(q). \]
Write  $\Lambda_{\sigma}=\{ [X_1],\dots,[X_N]\}$.
By definition~(\ref{action-gl}) of the action, $\Lambda_{\sigma}^g=\Lambda_{\sigma}$ for
all $g\in\GL(n+1,q)$; so
there is a group homomorphism
$\pi:\GL(n+1,q)\to S_N$ (where $S_N$ is the symmetric group on $\{1,\dots,N\}$) such that
\begin{equation}
  \label{action-L}
  [X_i]^g=[X_i^g]=[g^{\sigma}X_ig^{-1}]=[X_{\pi(g)(i)}].
\end{equation}
Observe that this action of $\GL(n+1,q)$ on $M_{n+1}(q)$ is matrix conjugation
if and only if $\sigma=1$.

\subsection{Hyperplanes of $\lrootG$}\label{hyperplanes}
In this section we will briefly recall from~\cite{Pas24} and~\cite{Pasini24} the most significant results related to the hyperplanes of $\lrootG$ arising from the embeddings $\bar{\vep}$ and $\bar{\vep}_{\sigma}.$
For the case of the embedding $\bar{\vep}$ see also Section 2.7 of~\cite{parte1}.

We begin with a general theorem about geometric hyperplanes.
\begin{theorem}[Theorem 1.5, \cite{Pasini24}]
\label{msub}
  All hyperplanes of $\lrootG$ are maximal subspaces.
\end{theorem}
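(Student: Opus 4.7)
The plan is to prove the contrapositive: if $\cX$ is a subspace of $\lrootG$ with $\cX \supsetneq \cH$, then $\cX$ is the full point set $\cP$. Fix $p_0 = ([x_0],[\xi_0]) \in \cX \setminus \cH$. Since $\cH$ meets every line, each line $\ell$ through $p_0$ contains a point of $\cH$ distinct from $p_0$, hence two distinct points of $\cX$; the subspace axiom forces $\ell \subseteq \cX$. Consequently the set of all points collinear with $p_0$ is contained in $\cX$. This set decomposes as $A \cup B$ where $A := \{([y],[\xi_0]) : [y] \in [\ker \xi_0]\}$ and $B := \{([x_0],[\eta]) : [x_0] \in [\eta]\}$ are each naturally isomorphic to $\PG(n-1,q)$ and meet only at $p_0$.

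Because $A \cap \cH$ and $B \cap \cH$ are projective subspaces of $A$ and $B$ respectively, neither containing $p_0$, both are \emph{proper}; so $(A \cup B) \setminus \cH$ is large. Iterating the previous line argument at every $p \in (A \cup B) \setminus \cH$ shows that the full star of $p$ also lies in $\cX$. In particular, for each $y'$ with $([y'],[\xi_0]) \in A \setminus \cH$ the entire set $B_{y'} := \{([y'],[\eta']) : [y'] \in [\eta']\}$ joins $\cX$, and symmetrically sets $A_\eta$ join $\cX$ for suitable $\eta$. For an arbitrary $q = ([y],[\eta]) \in \cP$ we place $q$ in $\cX$ via a path of length at most three starting at $p_0$: the cases $q \in \cH \cup A \cup B$ are immediate, the cases $\xi_0(y) = 0$ or $\eta(x_0) = 0$ reduce to a $2$-step path, and the generic case $\xi_0(y) \neq 0$ and $\eta(x_0) \neq 0$ is treated with the $3$-step path $p_0 \to ([y'],[\xi_0]) \to ([y'],[\eta]) \to q$ for some $y' \in \ker \xi_0 \cap \ker \eta$, a subspace of vector dimension $n-1 \geq 1$.

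The main obstacle is to choose $y'$ so that both $([y'],[\xi_0])$ and $([y'],[\eta])$ avoid $\cH$, ensuring each line along the path carries two points of $\cX$ and so lies in $\cX$. The "bad" subsets of $[\ker \xi_0 \cap \ker \eta]$ (one for each of the two conditions) are projective subspaces, proper whenever $A \not\subseteq \cH$ and $A_\eta \not\subseteq \cH$. Since a union of two proper projective subspaces of $\PG(m,q)$ with $m \geq 1$ never exhausts the whole space, a good $y'$ exists whenever $n \geq 3$. The borderline configurations (the low-dimensional case $n = 2$, and the exceptional situations where one of the bad subspaces fills all of $[\ker \xi_0 \cap \ker \eta]$) are handled by invoking the transitive $\PGL(n+1,q)$-action on $\cP$ recalled in Section~\ref{long root geometry}: one simply shifts $p_0$ to a more favourable point of $\cX \setminus \cH$, or runs the iteration one step further to acquire additional stars inside $\cX$.
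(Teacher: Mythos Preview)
The paper does not prove this statement at all; it is simply quoted from \cite{Pasini24} (Theorem~1.5 there) and used as a black box, so there is no in-paper argument to compare your attempt against.

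Your overall strategy is the standard one and is correctly set up in the generic case for $n\geq 3$: once a single point $p_0\in\cX\setminus\cH$ is available, every line through it lies in $\cX$, hence the full residue $A\cup B$ lies in $\cX$, and one then tries to reach an arbitrary $q$ by a short path whose intermediate vertices all avoid $\cH$.  The gap is in your final paragraph.  The appeal to the transitive $\PGL(n+1,q)$-action cannot rescue the borderline configurations: that action does not fix $\cH$, so ``shifting $p_0$'' moves $\cH$ and the target $q$ along with it, and nothing is gained.  What is actually needed is the other option you mention --- iterate further or route the path through $B$ instead of $A$ --- but this is not automatic and you do not carry it out.  The bad case is exactly when $A\cap\cH$ coincides with the hyperplane $[\ker\xi_0\cap\ker\eta]$ of $A$: then every candidate $y'$ with $\eta(y')=0$ is blocked, while the points of $A\setminus\cH$ all have $\eta(y')\neq 0$, so $([y'],[\eta])$ is not even a point of $\lrootG$ and your $3$-step path collapses.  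One must then exhibit a different detour (for instance via an auxiliary $[\eta']$ through both $[x_0]$ and $[y]$, using $B$ first) and argue that the corresponding obstructions on the $B$-side and on the $A_\eta$-side cannot \emph{all} occur simultaneously; your sketch does not supply this case analysis.  The case $n=2$ is likewise unfinished: there $[\ker\xi_0\cap\ker\eta]$ is a single point, $[y']$ is forced, and you give no reason why both obstructions cannot hit that one point.
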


Take now $M\in M_{n+1}(q)\setminus\langle I\rangle$ and let $\bar{\vep}$ be the Segre embedding of $\lrootG.$ Then
\[{\cH}_M:=\bar{\vep}^{-1}([M^{\perp}])
  =\bar{\vep}^{-1}(\{ [X]\in \PG(M^0_{n+1}(q))\colon \Tr(XM)=0 \})\]
is a geometric hyperplane of $\lrootG$ called a \emph{hyperplane of plain type}, as defined in~\cite{Pasini24}.
\begin{prop}\cite[Corollary 1.7]{Pasini24}
  The hyperplanes of $\lrootG$ which arise from the Segre embedding $\bar{\vep}$ are precisely those of plain type.
\end{prop}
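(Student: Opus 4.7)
The statement is a biconditional about hyperplanes of $\lrootG$, so I would split it into the two natural implications. Throughout, I keep in mind that $\bar{\vep}$ maps $\lrootG$ into $\PG(M^0_{n+1}(q))$ (not into all of $\PG(M_{n+1}(q))$), and that its image spans this hyperplane; consequently the geometric hyperplanes of $\lrootG$ that arise from $\bar{\vep}$ are exactly the preimages $\bar{\vep}^{-1}(W)$ with $W$ a projective hyperplane of $\PG(M^0_{n+1}(q))$.

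For the ``plain-type $\Rightarrow$ arising from $\bar{\vep}$'' direction, I would start from the definition $\cH_M=\bar{\vep}^{-1}([M^{\perp}])$ and verify that $[M^{\perp}]\cap \PG(M^0_{n+1}(q))$ is indeed a proper hyperplane of $\PG(M^0_{n+1}(q))$. The key observation is that, for the saturation form, $I^{\perp}=\{X\in M_{n+1}(q):\Tr(X)=0\}=M^0_{n+1}(q)$; since hyperplanes of $M_{n+1}(q)$ are parametrized by nonzero vectors up to scalar via $M\mapsto M^{\perp}$ (using non-degeneracy of $f$), the condition $M\notin\langle I\rangle$ is precisely what guarantees $M^{\perp}\neq M^0_{n+1}(q)$, hence that $[M^{\perp}]\cap \PG(M^0_{n+1}(q))$ has codimension one in $\PG(M^0_{n+1}(q))$. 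Thus $\cH_M$ is a geometric hyperplane of $\lrootG$ arising from $\bar{\vep}$.

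For the converse, let $\cH=\bar{\vep}^{-1}(W)$ with $W$ a projective hyperplane of $\PG(M^0_{n+1}(q))$. Using non-degeneracy of $f$ once more, every hyperplane of $\PG(M^0_{n+1}(q))$ is the trace on $M^0_{n+1}(q)$ of a hyperplane of $\PG(M_{n+1}(q))$ distinct from $\PG(M^0_{n+1}(q))$ itself; that is, there exists $M\in M_{n+1}(q)\setminus\{O\}$, uniquely determined up to a scalar and up to addition of multiples of $I$, with $W=[M^{\perp}]\cap \PG(M^0_{n+1}(q))$. The condition that $W$ is proper forces $M\notin\langle I\rangle$, and then $\cH=\bar{\vep}^{-1}(W)=\cH_M$ is of plain type.

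The only real subtlety is the ambient-space mismatch: the form $f$ lives on the whole of $M_{n+1}(q)$, whereas $\bar{\vep}(\lrootG)$ spans only the codimension-one subspace $M^0_{n+1}(q)$. The non-uniqueness of $M$ modulo $\langle I\rangle$ is cosmetic (different representatives of the same coset define the same $\cH_M$), so the main thing to check carefully is the bijection between hyperplanes of $\PG(M^0_{n+1}(q))$ and classes $[M]\in \PG(M_{n+1}(q)/\langle I\rangle)$, which follows directly from non-degeneracy of $f$ and the identification $I^{\perp}=M^0_{n+1}(q)$.
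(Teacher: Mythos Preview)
The paper does not give its own proof of this proposition: it is stated as a citation of \cite[Corollary 1.7]{Pasini24} and no argument is supplied in the present paper. So there is no ``paper's proof'' to compare your proposal against.

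That said, your proposal is a correct and self-contained proof. The two directions you outline are exactly what is needed, and you identify the only nontrivial point: because $\bar{\vep}(\lrootG)$ spans $\PG(M^0_{n+1}(q))$ rather than the full $\PG(M_{n+1}(q))$, one must check that the map $M\mapsto [M^{\perp}]\cap\PG(M^0_{n+1}(q))$ sets up a bijection between classes $[M]$ with $M\in M_{n+1}(q)\setminus\langle I\rangle$ (modulo scalars and modulo $\langle I\rangle$) and projective hyperplanes of $\PG(M^0_{n+1}(q))$. Your use of the non-degeneracy of the saturation form together with the identification $I^{\perp}=M^0_{n+1}(q)$ is exactly the right tool for this, and the observation that $M\notin\langle I\rangle$ is equivalent to $M^{\perp}\neq M^0_{n+1}(q)$ closes the argument cleanly.
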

If $\sigma\in \Aut(\FF_q)$ then
\[{\cH}_{M,\sigma}:=\bar{\vep}_{\sigma}^{-1}([M^{\perp} \cap \bar{\vep}_{\sigma}(\lrootG)])
  =\bar{\vep}_{\sigma}^{-1}(\{ [X]\in \PG(M_{n+1}(q))\colon \Tr(XM)=0 \})\]
is again a hyperplane of $\lrootG$, but it is in general different from
$\cH_M$; see Proposition~\ref{pp} for more details.

Take $p\in\PG(V)$, $A\in\PG(V^*)$. Put ${\cM}_p:=\{ (p, H) : p\in H \}$
and ${\cM}_A:=\{ (x, A) : x\in A \}$. Then,
\begin{equation}\label{quasi-singular}
 \cH_{p,A}:=\{ (x,H) \colon (x,H) \text{ collinear with a point of }
  \cM_p\cup\cM_A \}
  \end{equation}
is a geometric hyperplane of $\lrootG$, called the \emph{quasi-singular hyperplane}
defined by $(p,A)$. If $p\in A$, then $\cH_{p,A}$ is called \emph{singular
  hyperplane of deepest point $(p,A)$} and consists of all points of
$\lrootG$ not at maximal distance from $(p,A)$ in the collinearity
graph of $\lrootG$.

The following theorem further describes the quasi-singular hyperplanes
of $\lrootG$.
\begin{prop}[\S 1.3, \cite{Pasini24}]
\label{qsh}
  Take $[x]\in\PG(V)$ and $[\xi]\in\PG(V^*)$. Then
  the quasi-singular hyperplane $\cH_{[x],[\xi]}$
  is the  hyperplane of
  plain type $\cH_M$ where $M=x\xi$.
\end{prop}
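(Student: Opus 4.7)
The plan is to compute both hyperplanes point by point and check that they describe the same subset of $\lrootG$. Let $M=x\xi$, which is a rank-one matrix.

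\textbf{Step 1: Rewrite $\cH_M$ analytically.} Take an arbitrary point $(y,\eta)\in\lrootG$ (so $\eta(y)=0$). By Proposition~\ref{prop}, $(y,\eta)\in\cH_M$ iff $\eta M y=0$. Since $M=x\xi$ is a product of a column by a row, we have $\eta M y=(\eta x)(\xi y)=\eta(x)\cdot\xi(y)$, a product of two scalars. Hence
\[ (y,\eta)\in\cH_M \iff \eta(x)=0 \ \text{ or }\ \xi(y)=0. \]

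\textbf{Step 2: Rewrite $\cH_{[x],[\xi]}$ combinatorially.} By definition~(\ref{quasi-singular}), $(y,\eta)\in\cH_{[x],[\xi]}$ iff $(y,\eta)$ is collinear in $\lrootG$ with some point of $\cM_{[x]}\cup\cM_{[\xi]}$. I would now use the classification of lines of $\lrootG$ recalled in Section~\ref{long root geometry}: two points $(y,\eta)$ and $(y',\eta')$ are collinear precisely when they agree in the first coordinate (and then lie on a line of type $\ell_{y,S}$ with $S\subseteq\eta\cap\eta'$) or in the second coordinate (and then lie on a line of type $\ell_{r,\eta}$ with $\{y,y'\}\subseteq r$). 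So, for $(y,\eta)$ to be collinear with some $([x],[H])\in\cM_{[x]}$ (which requires $x\in H$), either $y=x$, or $\eta=H$ for some hyperplane $H\ni x$; both possibilities reduce to the single condition $\eta(x)=0$. An identical argument for $\cM_{[\xi]}$ yields $\xi(y)=0$. Therefore
\[ (y,\eta)\in\cH_{[x],[\xi]} \iff \eta(x)=0 \ \text{ or }\ \xi(y)=0. \]

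\textbf{Step 3: Conclude.} The two characterizations obtained in Steps~1 and~2 coincide, so $\cH_{[x],[\xi]}=\cH_M$ as subsets of $\lrootG$, proving the proposition.

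The only delicate point is Step~2: one must be careful to use both types of lines of $\lrootG$ and to recognise that the case $\eta=H$ forces $x\in\eta$ (since $(x,H)\in\cM_{[x]}$ already requires $x\in H$), so both sub-cases collapse to $\eta(x)=0$. Everything else is a direct unwinding of definitions together with the identity $\Tr(y\xi\cdot x\xi)=\eta(x)\xi(y)$ for the rank-one matrix $M=x\xi$.
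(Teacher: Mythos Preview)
The paper does not give its own proof of this proposition; it is quoted without argument from \S1.3 of \cite{Pasini24}. Your direct verification is correct: Step~1 uses Proposition~\ref{prop} and the factorization $\eta(x\xi)y=(\eta x)(\xi y)$ for the rank-one matrix $M=x\xi$, and Step~2 correctly unwinds the collinearity relation in $\lrootG$ via the two line types (noting in particular that the sub-case $[y]=[x]$ already forces $\eta(x)=0$ since $(y,\eta)\in\lrootG$). One small typo: in your last line you wrote $\Tr(y\xi\cdot x\xi)$ where you meant $\Tr(y\eta\cdot x\xi)$.
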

In particular,
all quasi-singular hyperplanes are hyperplanes of plain type arising
from matrices $M$ of rank $1$ and, conversely, for each matrix $M\in M_{n+1}(q)$
of rank $1$ the hyperplane of plain type ${\cH}_M$ is quasi-singular.

\begin{prop}[Theorem 1.6, \cite{Pas24}]
  \label{pp}
  Let $\cH$ be a geometric
  hyperplane of $\lrootG$.
  \begin{enumerate}
  \item If $\cH$ is quasi-singular, then $\cH$ arises
    from $\bar{\vep}_{\sigma}$ for all $\sigma\in\Aut(\FF_q)$.
  \item If $\cH$ is not quasi-singular, then $\cH$ arises
    from $\bar{\vep}_{\sigma}$ for at most one $\sigma\in\Aut(\FF_q)$.
  \end{enumerate}
\end{prop}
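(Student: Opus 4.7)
The plan is to handle Part (1) by an explicit construction and then prove Part (2) by its contrapositive, reducing the uniqueness of $\sigma$ to a rank condition on the defining matrix.

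For Part (1), Proposition~\ref{qsh} already supplies the case $\sigma=1$, namely $\cH_{[y],[\eta]} = \cH_{y\eta}$. For arbitrary $\sigma\in\Aut(\FF_q)$ I would test the candidate rank-$1$ matrix $M_\sigma:=y\eta^\sigma$. Since $(w^\sigma\xi)(y\eta^\sigma)=(\xi y)(w^\sigma\eta^\sigma)$ and $\Tr(w^\sigma\eta^\sigma)=\eta^\sigma w^\sigma=(\eta w)^\sigma$, we obtain
\[ \Tr\bigl((w^\sigma\xi)(y\eta^\sigma)\bigr) = (\xi y)(\eta w)^\sigma, \]
which vanishes exactly when $(\xi y)(\eta w)=0$. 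Hence $\bar\vep_\sigma^{-1}([M_\sigma^\perp])$ is the set of $([w],[\xi])\in\lrootG$ with $(\xi y)(\eta w)=0$, i.e.\ $\cH_{[y],[\eta]}$ itself, and (1) follows.

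For Part (2), I would argue the contrapositive: suppose $\cH$ arises from $\bar\vep_\sigma$ via a matrix $M$ and from $\bar\vep_{\sigma'}$ via $M'$ with $\sigma\neq\sigma'$, and aim to show $\rank M = 1$, so that $\cH$ is quasi-singular by Proposition~\ref{qsh}. The two descriptions yield, for every $(w,\xi)$ with $\xi w = 0$,
\[ \xi M w^\sigma = 0 \iff \xi M' w^{\sigma'} = 0. \]
Fixing $w\neq 0$ and letting $\xi$ range in the hyperplane $\{\xi:\xi w=0\}$, both sides are linear in $\xi$ and test $\xi$ against $Mw^\sigma$ and $M'w^{\sigma'}$ respectively; for the two subsets to coincide one must have $M'w^{\sigma'} \equiv \mu(w)\,Mw^\sigma \pmod{\langle w\rangle}$ for some $\mu(w)\in\FF_q$, with the degenerate case being that both residues vanish.

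Replacing $w$ by $\alpha w$ and exploiting the $\sigma$-semilinearity of $w\mapsto Mw^\sigma$ versus the $\sigma'$-semilinearity of $w\mapsto M'w^{\sigma'}$ yields the functional equation $\mu(\alpha w) = \alpha^{\sigma'-\sigma}\mu(w)$; for $\sigma\neq\sigma'$ this twist is incompatible with $\FF_q$-linearity. Expanding the proportionality at $w_1$, $w_2$ and $w_1+w_2$ for a basis of $V$ should then force the image of $w\mapsto Mw^\sigma \bmod\langle w\rangle$ to lie in a single $1$-dimensional family, equivalent to $\rank M \leq 1$. The main obstacle is precisely reconciling the multiplicative twist $\alpha^{\sigma'-\sigma}$ with the additivity coming from bilinearity; in addition, the degenerate stratum where $Mw^\sigma\in\langle w\rangle$ holds for many $w$ has to be isolated and treated by a separate semi-eigenvector argument, showing that even this case forces $\rank M = 1$.
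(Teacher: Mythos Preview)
The paper does not give a proof of this proposition: it is quoted verbatim from~\cite{Pas24} (Theorem~1.6 there) and used as a black box. So there is no ``paper's own proof'' to compare your attempt with.

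That said, your Part~(1) is correct and self-contained. The computation $\Tr\bigl((w^{\sigma}\xi)(y\eta^{\sigma})\bigr)=(\xi y)(\eta w)^{\sigma}$ is right, and this vanishes exactly when $(\xi y)(\eta w)=0$, which by Proposition~\ref{qsh} (applied to the Segre embedding) characterises $\cH_{[y],[\eta]}$. One small remark: later you invoke Proposition~\ref{qsh} again to deduce ``$\rank M=1\Rightarrow\cH$ quasi-singular'' for the \emph{twisted} embedding, but Proposition~\ref{qsh} is a statement about $\bar\vep_{1}$. The correct justification is precisely your Part~(1) calculation read backwards: if $M=a\beta$ then $\bar\vep_{\sigma}^{-1}([M^{\perp}])=\cH_{[a],[\beta^{\sigma^{-1}}]}$.

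Your Part~(2) is only a sketch, and you flag the two obstacles honestly. The twist identity $\mu(\alpha w)=\alpha^{\sigma'-\sigma}\mu(w)$ is correct and is indeed the crux, but the step ``expanding at $w_{1},w_{2},w_{1}+w_{2}$ \dots\ should force the image to lie in a single $1$-dimensional family'' hides real work: one must simultaneously control the $\langle w\rangle$-error terms and track how the degenerate locus $\{[w]:Mw^{\sigma}\in\langle w\rangle\}$ interacts with the additivity relations. Note in particular that your closing assertion --- that the degenerate stratum ``forces $\rank M=1$'' --- cannot be literally true: if $Mw^{\sigma}\in\langle w\rangle$ for \emph{every} $w$ then $M$ is a nonzero scalar matrix, which has full rank. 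What one actually needs in that extreme case is to show directly that $\cH_{I,\sigma}$ cannot arise from any $\bar\vep_{\sigma'}$ with $\sigma'\neq\sigma$; the rank-$1$ strategy does not cover it. So the outline is reasonable, but the endgame is more delicate than the last sentence suggests, and since the paper relies on~\cite{Pas24} rather than proving this here, you would be supplying a genuinely new argument.
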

By Proposition~\ref{qsh},  there is a one-to-one correspondence between
quasi-singular hyperplanes of $\lrootG$
and proportionality classes of matrices of rank $1$;
Proposition~\ref{pp} shows that these hyperplanes are the only ones
which arise from both the Segre embedding $\bar{\vep}$ and
also from all the twisted embeddings $\bar{\vep}_{\sigma}$; as we will see in Subsection~\ref{sec pesi}, they induce words with the same weight on the code $\cC(\Lambda_{\sigma})$
and on the code $\cC(\Lambda_1)$ studied in~\cite{parte1}. 

We now consider a further family of hyperplanes.
Suppose $\phi$ is a non-linear, fixed-point-free  collineation of $\PG(V).$ 
The set $S_{\phi}:=\{ \langle p,\phi(p)\rangle \colon p\in\PG(V) \}$ is
a line-spread of $\PG(V)$ if and only if $\phi$ is an involution;
see~\cite[Lemma 2.9]{Pasini24}.
Such an involution may exist only
if $n$ is odd, since otherwise $\PG(V)$ does not admit
line-spreads at all.
We shall call a spread $S_{\phi}$ obtained in this way
a \emph{semi-standard line-spread} of $\PG(V)$.
Under our assumptions $\phi$ is an involution, hence
the semi-linear mapping $f\colon V\rightarrow V$
associated to $\phi$ is defined as $f(x)=Mx^{\sigma}$ for $M\in M_{n+1}(q)$ such that $M^{\sigma}=M^{-1}$ and
$\sigma^2=1.$

We say that a line-spread $S$ \emph{admits a dual} if there exists a line spread $S^*$ of
$\PG(V^*)$ such that for every line $\ell^*\in S^*$ (i.e.\ for every $2$-codimensional subspace of $\PG(V)$),  the members of $S$ contained in $\ell^*$, form a line spread of $\ell^*$; see~\cite{Pasini24}.

If a dual spread exists, then this is unique.

Given a spread $S$ admitting a dual $S^*$ it is possible to
construct a geometric hyperplane $\cH_S$ of $\lrootG$;
see~\cite[Theorem 1.11]{Pasini24}.
In particular, if $S_{\phi}$ is a semi-standard line-spread of
$\PG(V)$, then by~\cite[Proposition 2.10]{Pasini24}, $S_{\phi}$
admits a dual spread and the corresponding geometric hyperplane of $\lrootG$ is defined as
\begin{multline*}
   \cH_{\phi}:=\{ ([x],[\xi])\in\lrootG\colon [\xi]\supset \ell_x\}=
   \{ ([x],[\xi])\in\lrootG\colon [x],[\phi(x)]\in [\xi] \}=\\
   \{ ([x],[\xi])\colon \xi x=0, \xi Mx^{\sigma}=0 \},
 \end{multline*}
 where $\ell_{x}:=\langle x,\phi(x)\rangle$ is the unique element of $S$ containing $[x]$ and $M$ is
 the matrix appearing in the definition of $\phi$; see~\cite[Theorem 1.11]{Pasini24}.
We call $  \cH_{\phi}$ a hyperplane of $\lrootG$ of {\it semi-standard spread type}.

 Let now $\bar{\vep}_{\sigma}$ be the twisted embedding of $\lrootG$ obtained
 using the same automorphism $\sigma$ as above.
 The hyperplane $\cH_{\phi}$ of semi-standard spread type arises from the
 twisted embedding $\bar{\vep}_{\sigma}$. Indeed,
 \begin{multline*}
   \bar{\vep}_{\sigma}(\cH_{\phi})=
   \{ \bar{\vep}_{\sigma}(([x],[\xi]))\colon \xi x=0, \xi M x^{\sigma}=0 \}
   = \\
   \{ [x^{\sigma}\otimes \xi]: \xi x=0, \xi Mx^{\sigma}=0 \}=
   \bar{\vep}_{\sigma}(\lrootG)\cap\{ [x^{\sigma}\otimes \xi] \colon \xi M x^{\sigma}=0\}=\\
   \bar{\vep}_{\sigma}(\lrootG)\cap\{ [x\otimes \xi] \colon \xi Mx=0\}=
   \bar{\vep}_{\sigma}(\lrootG)\cap [M^{\perp}].
 \end{multline*}
 
Since $\cH_{\phi}$
 is not a (semi-)singular hyperplane, it arises \emph{only} from
 $\bar{\vep}_{\sigma}$; see~\cite[Theorem 1.12]{Pas24}.
 We summarize the above arguments in the following proposition.
 \begin{prop}
 If a semi-standard spread type hyperplane $\cH_{\phi}$ of $\lrootG$ arises from the
 twisted embedding $\bar{\vep}_{\sigma}$ then $\sigma^2=1$, $n$ is odd and  
   $\phi\colon \PG(V)\rightarrow \PG(V)$ is an involutory
     fixed-point free
   semilinear collineation of the form $\phi([x])=[Mx^{\sigma}]$ with
   $M^{\sigma}=M^{-1}.$ Furthermore
   $\cH_{\phi}=\bar{\vep}_{\sigma}^{-1}(M^{\perp})$.
 \end{prop}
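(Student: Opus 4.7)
The strategy is to unpack the definition of a semi-standard spread type hyperplane, identify a natural twisted embedding that realises $\cH_\phi$, and then invoke the uniqueness statement of Proposition~\ref{pp} to force this natural embedding to coincide with $\bar\vep_\sigma$.

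First, I would recall the structure of $\phi$. By definition $\cH_\phi$ is attached to a semi-standard line-spread $S_\phi=\{\langle p,\phi(p)\rangle:p\in\PG(V)\}$ of $\PG(V)$, and by~\cite[Lemma 2.9]{Pasini24} together with the discussion preceding the proposition, $\phi$ is a non-linear fixed-point free involution. Since $S_\phi$ is a line-spread of $\PG(V)$, the dimension $n+1$ of $V$ must be even, so $n$ is odd. Lifting $\phi$ to a semilinear map $V\to V$, we may write $\phi([x])=[N x^{\tau}]$ for some invertible $N\in\GL(n+1,q)$ and some $\tau\in\Aut(\FF_q)$ with $\tau\neq 1$. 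Imposing $\phi^{2}=\mathrm{id}$ yields $N N^{\tau}\in\langle I\rangle$ and $\tau^{2}=1$; after rescaling $N$ we may arrange $N^{\tau}=N^{-1}$.

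Second, I would identify the twisted embedding naturally realising $\cH_\phi$. Plugging the representation $\phi([x])=[Nx^{\tau}]$ into the description
\[
\cH_{\phi}=\{([x],[\xi])\in\lrootG: \xi x=0,\ \xi N x^{\tau}=0\}
\]
and repeating verbatim the computation carried out just before the proposition (which uses Proposition~\ref{prop} to rewrite $\xi N x^{\tau}=0$ as $\Tr((x^{\tau}\otimes\xi)N)=0$), one obtains $\bar\vep_{\tau}(\cH_{\phi})=\bar\vep_{\tau}(\lrootG)\cap[N^{\perp}]$. Hence $\cH_{\phi}=\bar\vep_{\tau}^{-1}(N^{\perp})$, and $\cH_{\phi}$ does arise from the twisted embedding whose companion automorphism is $\tau$.

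Third, I would match $\sigma$ with $\tau$. Because $N$ is invertible (being conjugate under $\tau$ to its inverse) it has rank $n+1>1$, so by Proposition~\ref{qsh} the hyperplane $\cH_{\phi}$ is not quasi-singular. Proposition~\ref{pp}(2) then says that $\cH_{\phi}$ arises from at most one twisted embedding. Since by hypothesis $\cH_{\phi}$ arises from $\bar\vep_{\sigma}$ and by the previous step it also arises from $\bar\vep_{\tau}$, one must have $\sigma=\tau$. Renaming $N$ as $M$, this gives $\sigma^{2}=1$, $M^{\sigma}=M^{-1}$, $\phi([x])=[Mx^{\sigma}]$, and the final identification $\cH_{\phi}=\bar\vep_{\sigma}^{-1}(M^{\perp})$.

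The main obstacle is conceptually the matching $\sigma=\tau$: the bulk of the argument is routine unpacking of definitions, but this identification is not automatic and depends on the uniqueness clause of Proposition~\ref{pp}(2). What makes this clause applicable here is precisely the invertibility of $M$, which rules out rank-one behaviour and therefore quasi-singularity; everything else (the value of $n\bmod 2$, the involutory identities on $M$, the coincidence with $M^{\perp}$) follows cleanly from the preceding computation.
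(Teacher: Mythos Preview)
Your proposal is correct and mirrors the paper's own argument almost exactly: the text immediately preceding the proposition unpacks $\phi$ as $[x]\mapsto[Mx^{\tau}]$ with $\tau^{2}=1$ and $M^{\tau}=M^{-1}$, verifies $\bar\vep_{\tau}(\cH_{\phi})=\bar\vep_{\tau}(\lrootG)\cap[M^{\perp}]$, and then uses uniqueness of the realising embedding (the paper cites \cite[Theorem~1.12]{Pas24}, which is your Proposition~\ref{pp}(2)) to force $\sigma=\tau$.

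One small imprecision worth tightening: your appeal to Proposition~\ref{qsh} to conclude that $\cH_{\phi}$ is not quasi-singular does not quite work as stated, because that proposition characterises quasi-singular hyperplanes through the \emph{Segre} embedding $\bar\vep_{1}$, not through $\bar\vep_{\tau}$; knowing that $N$ has full rank in the $\bar\vep_{\tau}$-description does not by itself exclude a rank-$1$ description via $\bar\vep_{1}$. The paper sidesteps this by a bare citation to \cite{Pas24}; a clean self-contained fix is to compare cardinalities using Proposition~\ref{cardinalita' iperpiani quasi singolari}, where the spread-type count \eqref{cc3} is strictly smaller than both quasi-singular counts \eqref{cc1} and \eqref{cc2}.
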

 By the definition of collinearity in $\lrootG$ and a direct counting
 argument in the case of semi-standard spread type hyperplanes, we have the following.
\begin{prop}[Proposition 2.11, \cite{parte1}]
  \label{cardinalita' iperpiani quasi singolari}
The following hold.
\begin{enumerate}
	\item The cardinality of the singular hyperplanes of $\lrootG$ is
          \begin{equation}\label{cc1}
            \frac{(q^{n+1}-1)(q^{n-1}-1)}{(q-1)^2}+\frac{q^n-1}{q-1} q^{n-1}.
          \end{equation}
	\item
	The cardinality of the quasi-singular but not singular hyperplanes of $\lrootG$ is
	\begin{equation}\label{cc2}
          \frac{(q^{n+1}-1)(q^{n-1}-1)}{(q-1)^2}+(\frac{q^n-1}{q-1} +1)q^{n-1}.
        \end{equation}
      \item
        The cardinality of a semi-standard spread type hyperplane of $\lrootG$ is
        \begin{equation}\label{cc3}
          \frac{(q^{n+1}-1)}{q-1}\frac{(q^{n-1}-1)}{q-1}.
        \end{equation}
\end{enumerate}	
\end{prop}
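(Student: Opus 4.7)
The plan is to treat each of the three cardinalities separately. For Parts 1 and 2 I will compute the complement of the hyperplane inside $\lrootG$ and subtract it from $N_\sigma=\frac{(q^{n+1}-1)(q^n-1)}{(q-1)^2}$; for Part 3 I will count directly.

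First I would unpack definition~(\ref{quasi-singular}) of $\cH_{p,A}$ using the fact that two points of $\lrootG$ are collinear if and only if they share either their first or second coordinate. Consequently $(x,H)$ lies in $\cH_{p,A}$ iff at least one of $x=p$, $p\in H$, $H=A$, $x\in A$ holds, so the complement in $\lrootG$ consists of the pairs $(x,H)$ with $x\in H$, $x\neq p$, $x\notin A$, $p\notin H$ and $H\neq A$.

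For Part 1, the assumption $p\in A$ makes $H\neq A$ automatic from $p\notin H$. There are $q^n$ hyperplanes $H$ with $p\notin H$; for each such $H$ the codimension-$2$ subspace $H\cap A$ contains $\frac{q^{n-1}-1}{q-1}$ points, so $|H\setminus A|=q^{n-1}$. Hence the complement has size $q^{2n-1}$, and subtracting from $N_\sigma$ and simplifying yields~(\ref{cc1}). In Part 2, the hypothesis $p\notin A$ means $A$ itself lies among the $q^n$ hyperplanes avoiding $p$ and must be discarded, so the complement shrinks to $(q^n-1)q^{n-1}=q^{2n-1}-q^{n-1}$. This is exactly $q^{n-1}$ smaller than in Part 1, matching the predicted $+q^{n-1}$ gap between~(\ref{cc2}) and~(\ref{cc1}).

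For Part 3 I would use the characterization $\cH_\phi=\{([x],[\xi])\in\lrootG : [x],[\phi(x)]\in[\xi]\}$ recalled just before the proposition. Since $\phi$ is fixed-point free, $[x]\neq[\phi(x)]$ and these two points span the line $\ell_x$, so the condition on $[\xi]$ collapses to $\ell_x\subseteq[\xi]$. Hyperplanes of $\PG(V)$ containing a fixed line are parametrised by hyperplanes of the $(n-2)$-dimensional projective quotient $\PG(V/\ell_x)$, of which there are $\frac{q^{n-1}-1}{q-1}$; multiplying by $|\PG(V)|=\frac{q^{n+1}-1}{q-1}$ gives~(\ref{cc3}). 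The main (rather modest) obstacle is the bookkeeping of the four exclusion conditions in Parts 1 and 2: one must isolate the case distinction $p\in A$ versus $p\notin A$ cleanly, after which everything reduces to standard subspace counting in $\PG(V)$ and $\PG(V^*)$.
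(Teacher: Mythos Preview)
Your proof is correct and matches the approach the paper indicates (the proposition is quoted from~\cite{parte1} with only the hint ``by the definition of collinearity in $\lrootG$ and a direct counting argument in the case of semi-standard spread type hyperplanes''). Your unpacking of collinearity to the four conditions $x=p$, $p\in H$, $H=A$, $x\in A$, the complement counts $q^{2n-1}$ and $q^{2n-1}-q^{n-1}$, and the direct count $\frac{q^{n+1}-1}{q-1}\cdot\frac{q^{n-1}-1}{q-1}$ for $\cH_{\phi}$ are all accurate and constitute exactly the argument being invoked.
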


\section{The code $\cC(\Lambda_{\sigma})$ from the twisted embedding}
\label{twisted code}
In this section  we focus on the code $\cC(\Lambda_{\sigma}).$ We first recall  the construction of the codewords of $\cC(\Lambda_{\sigma}).$
Suppose $\Lambda_{\sigma}:=\{[X_1],[X_2],\dots, [X_N]\}\subseteq \PG({M}_{n+1}(q))$ and denote by ${M}_{n+1}^*(q)$ the dual of the vector space
  $M_{n+1}(q)$.
  For any functional ${\mathfrak m}\in {M}_{n+1}^*(q)$,
  there exists a unique matrix $M\in M_{n+1}(q)$ such that
  \[{\mathfrak m}\colon {M}_{n+1}(q)\rightarrow \FF_q,\quad \mathfrak{m}(X)=\Tr(XM)\]
  for all $X\in M_{n+1}(q)$.
  Define now $c_{\mathfrak m}$ as the $N$-uple
  \begin{equation}
    c_{\mathfrak{m}}=(\mathfrak{m} (X_1),\dots,\mathfrak{m}(X_N))
    \label{c_m}
  \end{equation}
  with   \begin{equation}\label{m_m} \mathfrak{m}(X_i)=\Tr(X_iM),\,\, 1\leq i\leq N, \end{equation}
  where $M\in {M}_{n+1}(q)$ is associated to $\mathfrak{m}$ as before.
  In this setting,
  \[\cC(\Lambda_{\sigma})=\{ c_{\mathfrak m} : \mathfrak{m}\in M_{n+1}^*(q) \}.\]
  It is clear that the codeword $c_{\mathfrak{m}}\in \cC(\Lambda_{\sigma})$ is associated to the matrix $M\in M_{n+1}(q)$ defining the functional $\mathfrak{m}$, so we shall denote it also by $c_M$.

\begin{lemma}
  \label{triv-lem}
  Let $X_1,\dots,X_N$ be representatives of the points $[X_1],\dots,[X_N]$
  of $\Lambda_{\sigma}$. Then the map
  \begin{equation}
    \label{triv-eq}
    ev:\begin{cases}
    M_{n+1}(q)\to\cC(\Lambda_\sigma) \\
    M \to c_M:=(\Tr(X_1M),\Tr(X_2M),\dots,\Tr(X_NM))
    \end{cases}
  \end{equation}
is a vector space isomorphism.
\end{lemma}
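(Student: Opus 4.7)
The map $ev$ is linear because the trace $\Tr(X_iM)$ is $\FF_q$-linear in $M$ for each fixed $X_i$; each component of $c_M$ is therefore linear in $M$, and the whole map is linear into $\FF_q^N$. Since $\cC(\Lambda_{\sigma})\subseteq\FF_q^N$, this already places $ev$ as a linear map $M_{n+1}(q)\to\cC(\Lambda_{\sigma})$. So the only real content of the lemma is bijectivity.

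For surjectivity, recall that $\cC(\Lambda_{\sigma})$ is by definition $\{c_{\mathfrak m}:\mathfrak m\in M_{n+1}^*(q)\}$, and every functional $\mathfrak m\in M_{n+1}^*(q)$ is realised uniquely as $\mathfrak m(X)=\Tr(XM)$ for some $M\in M_{n+1}(q)$, precisely because the saturation form $f(X,Y)=\Tr(XY)$ of~(\ref{sat form}) is non-degenerate and thus induces an isomorphism $M_{n+1}(q)\to M_{n+1}^*(q)$ by $M\mapsto f(\cdot,M)$. Hence every codeword of $\cC(\Lambda_{\sigma})$ equals $c_M$ for some $M$, and $ev$ is surjective.

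For injectivity, suppose $ev(M)=0$, i.e.\ $\Tr(X_iM)=0$ for every $i=1,\dots,N$. As noted in Subsection~\ref{long root geometry}, the image of $\bar{\vep}_{\sigma}$ spans $\PG(V\otimes V^*)=\PG(M_{n+1}(q))$, so the representatives $X_1,\dots,X_N$ span $M_{n+1}(q)$ as an $\FF_q$-vector space. Extending by linearity we obtain $\Tr(XM)=0$ for every $X\in M_{n+1}(q)$, that is, $M\in M_{n+1}(q)^{\perp}$ with respect to $f$. Non-degeneracy of $f$ then forces $M=O$, so $\ker(ev)=\{O\}$.

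There is essentially no obstacle here: once the spanning property of $\Lambda_{\sigma}$ and the non-degeneracy of the saturation form are invoked, the statement follows. As a sanity check on dimensions, $\dim_{\FF_q} M_{n+1}(q)=(n+1)^2=n^2+2n+1=k_{\sigma}$ (the value stated in Theorem~\ref{main thm 3}), consistent with $ev$ being an isomorphism of $\FF_q$-vector spaces of the same dimension.
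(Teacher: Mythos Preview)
Your proof is correct and follows essentially the same approach as the paper: linearity from the trace, surjectivity from the identification of $M_{n+1}^*(q)$ with $M_{n+1}(q)$ via the saturation form, and injectivity from the fact that $\Lambda_{\sigma}$ spans $M_{n+1}(q)$ together with the non-degeneracy of $f$. If anything, your argument for surjectivity is slightly more explicit than the paper's, which simply attributes it to ``the properties of the trace''.
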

\begin{proof}
  By
  the properties of the trace, $ev$ is well defined, linear
  and surjective. Suppose now $M\in\ker(ev)$. Then
  $\Tr(X_iM)=0$ for all $i=1,\dots,N$. On the other hand
  $\langle\Lambda_{\sigma}\rangle=\PG(M_{n+1}(q))$, so we have that
  the representatives of the points of $\Lambda_{\sigma}$ contain
  a basis $(B_1,\dots,B_{(n+1)^2})$
  for $M_{n+1}(q)$. Since $\Tr(B_iM)=0$ for all $i=1,\dots,(n+1)^2$,
  it follows that it must be $\Tr(XM)=0$ identically for
  all $X\in M_{n+1}(q)$, whence $M=0$.
\end{proof}

\subsection{The weights of $\cC(\Lambda_{\sigma})$}\label{sec pesi}
To determine the weight of the codewords of $\cC(\Lambda_{\sigma})$ we need to compute the cardinality of $\Lambda_{\sigma}\cap [W]$ where $[W]$ is a hyperplane of $[\langle \Lambda_{\sigma}\rangle]=\PG(V\otimes V^*)$.
Recall from Section~\ref{Sec 1} that any hyperplane of $\PG(V\otimes V^*)$
can be regarded as the orthogonal subspace $[M^{\perp}]$ of a $(n+1)\times (n+1)$-matrix $M$ with respect to the saturation form $f$.
The following lemma, an extension of \cite[Lemma 3.2]{parte1}, is
essential.

\begin{lemma}\label{pesi}
 Let $[M^{\perp}]$ be a hyperplane of $\PG(V\otimes V^*)$ with $M\in M_{n+1}(q)$, and
    suppose $\sigma\in\Aut(\FF_q)$. Then
\begin{equation} |[M^{\perp}]\cap\Lambda_{\sigma}|=
  \frac{(q^{n+1}-1)(q^{n-1}-1)}{(q-1)^2}+\theta_M\cdot q^{n-1},
  \end{equation}
  where $\theta_M$ is the number of hyperplanes $[\xi]$ of $\PG(V)$ such that
  $[\xi]^{\sigma}\subseteq [\xi M]$.
\end{lemma}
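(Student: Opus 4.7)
The plan is to translate the incidence condition defining $[M^{\perp}]\cap\Lambda_{\sigma}$ into a pair of linear conditions on $V$, fix the hyperplane coordinate $[\xi]$, count the admissible $[x]$'s by a two--case analysis, and finally sum over $[\xi]\in\PG(V^*)$.

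First I would apply Proposition~\ref{prop} (with $x$ replaced by $x^{\sigma}$): the point $[x^{\sigma}\otimes\xi]$ belongs to $[M^{\perp}]$ if and only if $\xi M x^{\sigma}=0$. Combining this with the defining condition $\xi(x)=0$ of $\Lambda_{\sigma}$, counting $|[M^{\perp}]\cap\Lambda_{\sigma}|$ reduces to counting the pairs $([x],[\xi])\in\PG(V)\times\PG(V^*)$ that satisfy simultaneously
\[ \xi\,x=0 \quad\text{and}\quad (\xi M)\,x^{\sigma}=0. \]

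Next I would fix $[\xi]\in\PG(V^*)$ and substitute $y:=x^{\sigma}$. Since $\sigma\in\Aut(\FF_q)$ induces a bijection on $\PG(V)$, counting admissible $[x]$'s is the same as counting admissible $[y]$'s. The condition $\xi x=0$ becomes $\xi^{\sigma}y=0$, i.e.\ $[y]\in[\xi^{\sigma}]$, and $\xi Mx^{\sigma}=0$ becomes $[y]\in[\xi M]$ (with the convention $[0]=\PG(V)$ covering the case $\xi M=0$). Hence the number of admissible $[y]$ is $|[\xi^{\sigma}]\cap[\xi M]|$.

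Then I would perform the case analysis. If $[\xi^{\sigma}]\subseteq[\xi M]$, which includes $\xi M=0$ as well as $[\xi^{\sigma}]=[\xi M]$ as hyperplanes, the intersection is $[\xi^{\sigma}]$, contributing $(q^n-1)/(q-1)$ projective points. Otherwise, $[\xi^{\sigma}]$ and $[\xi M]$ are two distinct hyperplanes of $\PG(V)$, whose intersection is a subspace of projective codimension $2$, contributing $(q^{n-1}-1)/(q-1)$ points. By definition, the number of $[\xi]$ falling in the first case is exactly $\theta_M$, and there are $(q^{n+1}-1)/(q-1)-\theta_M$ remaining $[\xi]$'s.

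Finally I would sum:
\[
|[M^{\perp}]\cap\Lambda_{\sigma}|
=\theta_M\cdot\frac{q^n-1}{q-1}+\Bigl(\frac{q^{n+1}-1}{q-1}-\theta_M\Bigr)\cdot\frac{q^{n-1}-1}{q-1},
\]
and simplify using $\tfrac{q^n-1}{q-1}-\tfrac{q^{n-1}-1}{q-1}=q^{n-1}$ to obtain the claimed formula. The only real subtlety, more bookkeeping than obstacle, is making sure the degenerate case $\xi M=0$ is consistently absorbed into the $[\xi^{\sigma}]\subseteq[\xi M]$ branch via the convention $[0]=\PG(V)$; everything else is direct linear algebra.
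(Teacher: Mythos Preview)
Your proof is correct and follows essentially the same approach as the paper: decompose over $[\xi]\in\PG(V^*)$, use Proposition~\ref{prop} to rewrite membership in $[M^{\perp}]$ as $[x]^{\sigma}\in[\xi M]$, and then do the two-case count on $|[\xi^{\sigma}]\cap[\xi M]|$. The paper splits into three cases (separating $\xi\in\ker M$ from $[\xi M]=[\xi^{\sigma}]$) before merging them, whereas you treat both under $[\xi^{\sigma}]\subseteq[\xi M]$ from the start; otherwise the arguments are identical.
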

  \begin{proof}
First observe that
   $\Lambda_{\sigma}=\{\bar{\vep}_{\sigma}(([x],[\xi]))\colon [x]\in [\xi]\}$ as a disjoint union
\[\Lambda_{\sigma}=\bigsqcup_{[\xi]\in\PG(V^*)}\{ [x^{\sigma}\otimes \xi] : [x]\in [\xi] \}.\]
So,
   \[ [M^{\perp}]\cap\Lambda_{\sigma}=\bigsqcup_{[\xi]\in\PG(V^*)}(\{ [x^{\sigma}\otimes\xi] : [x]\in [\xi] \}\cap [M^{\perp}]). \]

   By Proposition~\ref{prop},
   $[x^{\sigma}\otimes\xi]\in [M^{\perp}]$ if and only if $[x]^{\sigma}\in [\xi M]$.
Hence, $[x^{\sigma}\otimes \xi]\in \Lambda_{\sigma}\cap [M^{\perp}]$ if and only if $[x]^{\sigma}\in [\xi^{\sigma}]=[\xi]^{\sigma}$ and $[x]^{\sigma}\in [\xi M],$ i.e.
   \[ \Lambda_{\sigma}\cap [M^{\perp}]=\bigsqcup_{[\xi]\in\PG(V^*)}(\{ [x^{\sigma}\otimes \xi] : [x]^{\sigma}\in([\xi]^{\sigma}\cap [\xi M])\}). \]
 Turning to cardinalities,
 \begin{equation}
   \label{9bis} |\Lambda_{\sigma}\cap [M^{\perp}]|=\sum_{[\xi]\in\PG(V^*)}|[\xi]^{\sigma}\cap[\xi M]|.
 \end{equation}

Note that $[\xi]^{\sigma}$ is always a hyperplane of $\PG(V)$ and
$[\xi M]$ is a hyperplane of $\PG(V)$ if $\xi M$ is not the null vector of $V^*.$
The following cases may happen:
\begin{enumerate}[ {Case} 1.]
\item\label{c1} \framebox{$\xi\not\in\ker(M)$ and $[\xi M]\not=[\xi]^{\sigma}.$} In this case  $[\xi]^{\sigma}\cap[\xi M]$ is
a subspace of codimension $2$ of $\PG(V)$, so  $|[\xi]^{\sigma}\cap [\xi M]|=\frac{q^{n-1}-1}{q-1}$.
\item\label{c2} \framebox{$\xi\not\in\ker(M)$ and $[\xi M]=[\xi]^{\sigma}.$} In this case $|[\xi]^{\sigma}\cap [\xi M]|=\frac{q^{n}-1}{q-1}$.
\item\label{c3} \framebox{$\xi\in\ker(M).$} In this case $\xi M$ is the null functional, hence $[\xi M]=\PG(V).$ So, $[\xi]^{\sigma}\cap[\xi M]= [\xi]^{\sigma}$ is a hyperplane of $\PG(V)$ and again $|[\xi]^{\sigma}\cap [\xi M]|=\frac{q^{n}-1}{q-1}$.
\end{enumerate}
Clearly, Cases~\ref{c2} and \ref{c3} correspond to $[\xi]^{\sigma}\subseteq [\xi M]$ while Case~\ref{c1} corresponds to $[\xi]^{\sigma}\not\subseteq[\xi M]$. By plugging the values of the corresponding cardinalities in~\eqref{9bis}
we get
\begin{multline*}
  |\Lambda_{\sigma}\cap [M^{\perp}]|= \theta_M\cdot\frac{q^{n}-1}{q-1}+
  \left(\frac{q^{n+1}-1}{q-1}-\theta_M\right)\cdot\frac{q^{n-1}-1}{q-1}=\\
  \frac{(q^{n+1}-1)(q^{n-1}-1)}{(q-1)^2}+\theta_M\cdot q^{n-1}.
\end{multline*}
\end{proof}
\begin{remark}
  If $\sigma=1$, then the condition of Lemma~\ref{pesi} becomes
  $[\xi]\subseteq [\xi M]$, that is $\xi$ must be a \emph{left}
  eigenvector of $M$.
\end{remark}

The following is straightforward, considering that for any codeword $c_{\frak{m}}\in \cC(\Lambda_{\sigma})$, the weight of $c_{\frak{m}}$ is $wt(c_{\frak{m}})=N_{\sigma}-|[M^{\perp}]\cap\Lambda_{\sigma}|$, where $M$ is the matrix associated to the codeword  $c_{\frak{m}}$ as defined at the beginning of Section~\ref{twisted code}.
\begin{corollary}\label{weights}
Suppose $\sigma\in\Aut(\FF_q).$ The spectrum of the weights of $\cC(\Lambda_{\sigma})$	is
\[\{q^{n-1}\frac{(q^{n+1}-1)}{(q-1)} -q^{n-1} \theta_M\colon M\in M_{n+1}(q)\}\]
where $\theta_M$ is defined as in Lemma~\ref{pesi}.
	\end{corollary}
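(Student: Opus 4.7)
The plan is to reduce the corollary to a direct algebraic computation combining Lemma~\ref{pesi} with the length of the code. By construction, each codeword $c_M\in\cC(\Lambda_\sigma)$ has its $i$-th component equal to $\Tr(X_iM)$, which vanishes exactly when $[X_i]\in[M^\perp]$; consequently
\[ wt(c_M)=N_\sigma-|\Lambda_\sigma\cap[M^\perp]|. \]
Lemma~\ref{triv-lem} guarantees that $M\mapsto c_M$ is a bijection from $M_{n+1}(q)$ onto $\cC(\Lambda_\sigma)$, so the full weight spectrum is obtained by letting $M$ vary over all matrices in $M_{n+1}(q)$; no further enumeration issue arises.

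Next, I would plug in $N_\sigma=(q^{n+1}-1)(q^n-1)/(q-1)^2$ from Theorem~\ref{main thm 3} together with the explicit intersection count of Lemma~\ref{pesi}, namely
\[ |\Lambda_\sigma\cap[M^\perp]|=\frac{(q^{n+1}-1)(q^{n-1}-1)}{(q-1)^2}+\theta_M\cdot q^{n-1}. \]
Subtracting, the two leading terms combine via
\[ \frac{q^{n+1}-1}{(q-1)^2}\bigl((q^n-1)-(q^{n-1}-1)\bigr)=\frac{q^{n+1}-1}{(q-1)^2}\cdot q^{n-1}(q-1)=q^{n-1}\frac{q^{n+1}-1}{q-1}, \]
which, after subtracting the $\theta_M\cdot q^{n-1}$ term, yields the advertised expression $q^{n-1}\frac{q^{n+1}-1}{q-1}-q^{n-1}\theta_M$.

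There is no real obstacle: the geometric content of the argument has already been absorbed into Lemma~\ref{pesi}, and what remains is routine simplification. The only minor sanity check is the case $M=0$, which must correspond to the zero codeword of weight $0$; indeed, in this case $\xi M=0$ for every $\xi\in V^*$, hence $[\xi M]=\PG(V)$ and the condition $[\xi]^\sigma\subseteq[\xi M]$ is satisfied by every $[\xi]$, giving $\theta_0=(q^{n+1}-1)/(q-1)$ and thus weight $0$. Hence the displayed set truly exhausts the weight spectrum of $\cC(\Lambda_\sigma)$.
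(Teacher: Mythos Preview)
Your proposal is correct and follows essentially the same approach as the paper, which simply declares the corollary ``straightforward'' from the identity $wt(c_M)=N_\sigma-|[M^\perp]\cap\Lambda_\sigma|$ and Lemma~\ref{pesi}. You have merely filled in the algebraic simplification and added the sanity check for $M=0$, both of which the paper omits.
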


\begin{definition}\label{automorfismo notazione}
 Assume $\sigma \in \Aut(\FF_q)$ with $\sigma\neq\{1\}.$ Let  $\mathrm{Fix}(\sigma)\cong \FF_s$ be  the subfield  of  $\FF_q$ fixed by $\sigma.$ So, $q=s^t$ for some $t>1$ and there exists an index $j$ such that for all $x\in\FF_q$ we have $x^{\sigma}=x^{s^j}.$ 
 The \emph{$\sigma$-fixed subgeometry} of $\PG(n,q)$ is the point-line geometry having as points, the points of $\PG(n,q)$ fixed by $\sigma$, and as lines, the lines of $\PG(n,q)$ stabilized by $\sigma$; incidence is given by inclusion. 
\end{definition}

%
The  $\sigma$-fixed subgeometry  of $\PG(n,q)$ is isomorphic to $\PG(n,s).$

\subsection{Minimum weight codewords}
\label{minW}
Recall that
\begin{equation}\label{teta}
	\theta_M:=|\{ [\xi]\in\PG(V^*)\colon [\xi^{\sigma}]\subseteq [\xi M] \}|.
\end{equation}
By Corollary~\ref{weights}, in order to determine the minimum weight of $\cC(\Lambda_{\sigma})$ we need
to compute
\[ \max\{\theta_M: M\in M_{n+1}(q)\}. \]
We distinguish two cases.

\noindent {\bf{* Case A: $M$ is an invertible matrix.}} Hence, $[\xi M]$ is a hyperplane of $\PG(V)$ for all $[\xi]\in\PG(V^*)$. According to~\eqref{teta}, we need to count the number of points $[\xi]\in\PG(V^*)$ such that
 \begin{equation}
   \label{tce}
   [\xi^{\sigma}]=[\xi M].
 \end{equation}
This is equivalent to consider the set of fixed points of the semilinear collineation $\vartheta_{M,\sigma}$ of $\PG(V^*)$, induced by $\sigma$ and $M$, defined as
\begin{equation}\label{collineation}
\vartheta_{M,\sigma}: \begin{cases} \PG(V^*)\rightarrow \PG(V^*)\\
  [\zeta]\rightarrow [\zeta^{{\sigma}^{-1}} M].
\end{cases}
\end{equation}
Indeed, put $\xi:=\zeta^{\sigma^{-1}}.$ Then, $[\zeta]$ is fixed by $\vartheta_{M,\sigma}$, i.e. $[\zeta]=[\zeta^{{\sigma}^{-1}} M],$ if and only if
$[\xi^{\sigma}]=[\xi M]$. So, restricting to a subspace $W$ of $V^*$, the following is straightforward.
\begin{lemma} \label{l:c}
Let $W\subseteq V^*$ and $W^\sigma:=\{\xi^\sigma\colon \xi \in W\}.$ Then the number of points of $\PG(W^\sigma)$ fixed by $\vartheta_{M,{\sigma}}$ is the same as the number of points of $\PG(W)$ satisfying~\eqref{tce}, i.e.
\[|\{[\xi]\in \PG(W)\colon [\xi^\sigma]=[\xi M]\}|= |\{[\xi^\sigma]\in \PG(W^\sigma)\colon \vartheta_{M,\sigma}([\xi^\sigma])=[\xi^\sigma]\}|.\]
 \end{lemma}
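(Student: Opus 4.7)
The plan is to exhibit an explicit bijection between the two sets whose cardinalities we wish to equate, and the natural candidate is the $\sigma$-twisting map
\[ \Phi\colon \PG(W)\to \PG(W^{\sigma}),\qquad \Phi([\xi])=[\xi^{\sigma}]. \]
First I would verify that $\Phi$ is well defined and bijective. The identity $(\lambda\xi)^{\sigma}=\lambda^{\sigma}\xi^{\sigma}$ shows that rescaling a representative of $[\xi]$ merely rescales the representative of $[\xi^{\sigma}]$, so $\Phi$ descends to projective space; applying the same observation to $\sigma^{-1}$ gives a two-sided inverse $[\zeta]\mapsto[\zeta^{\sigma^{-1}}]$, so $\Phi$ is a bijection between the two point sets.

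Next I would show that $\Phi$ carries the set on the left-hand side of the claimed equality exactly onto the set on the right-hand side. Starting from the defining formula $\vartheta_{M,\sigma}([\zeta])=[\zeta^{\sigma^{-1}}M]$ and substituting $\zeta=\xi^{\sigma}$, the fixed-point condition $\vartheta_{M,\sigma}([\xi^{\sigma}])=[\xi^{\sigma}]$ unpacks to $[\xi M]=[\xi^{\sigma}]$, which is precisely condition~\eqref{tce}. Conversely, if $[\zeta]\in \PG(W^{\sigma})$ is fixed by $\vartheta_{M,\sigma}$, setting $\xi:=\zeta^{\sigma^{-1}}\in W$ we find $[\xi^{\sigma}]=[\zeta]=[\zeta^{\sigma^{-1}}M]=[\xi M]$, so $[\xi]$ satisfies~\eqref{tce} and $\Phi([\xi])=[\zeta]$.

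Hence $\Phi$ restricts to a bijection between $\{[\xi]\in\PG(W)\colon [\xi^{\sigma}]=[\xi M]\}$ and the set of fixed points of $\vartheta_{M,\sigma}$ in $\PG(W^{\sigma})$, and the equality of cardinalities follows immediately. There is essentially no obstacle: the lemma is a change-of-variables identity, and the reason it is clean is precisely that the factor $\sigma^{-1}$ built into the definition of $\vartheta_{M,\sigma}$ in~\eqref{collineation} is tailored so that the substitution $\zeta=\xi^{\sigma}$ converts the fixed-point equation into~\eqref{tce}.
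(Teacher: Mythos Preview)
Your proof is correct and follows exactly the approach the paper takes: the paper states the substitution $\xi:=\zeta^{\sigma^{-1}}$ (equivalently your map $\Phi$) just before the lemma, observes that $[\zeta]$ is fixed by $\vartheta_{M,\sigma}$ if and only if $[\xi^{\sigma}]=[\xi M]$, and then declares the lemma straightforward. You have simply written out this change of variables carefully.
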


 \begin{lemma}
   \label{l:I}
If $M=I$, then
$\theta_I=|\{ [\xi]\in\PG(V^*)\colon [\xi^{\sigma}]=[\xi]\}|= \frac{s^{n+1}-1}{s-1}.$
\end{lemma}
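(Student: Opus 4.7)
For $M=I$, the condition $[\xi^\sigma]\subseteq [\xi I]=[\xi]$, viewed as containment between hyperplanes of $\PG(V)$, forces equality since both have the same dimension. Equivalently, as points of $\PG(V^*)$, we are counting the $[\xi]$ fixed by the bijection $\Phi\colon \PG(V^*)\to\PG(V^*)$, $[\xi]\mapsto[\xi^\sigma]$; this is the Frobenius-type semilinear collineation acting coordinatewise with respect to the fixed dual basis $E^*$.

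The strategy is to identify the fixed-point set of $\Phi$ with the $\sigma$-fixed subgeometry of $\PG(V^*)$ introduced in Definition~\ref{automorfismo notazione}, which is isomorphic to $\PG(n,s)$ and hence has cardinality $\frac{s^{n+1}-1}{s-1}$. One inclusion is immediate: if $\xi$ has all its coordinates in $\FF_s$, then $\xi^\sigma=\xi$ and $[\xi]$ is fixed by $\Phi$.

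For the reverse inclusion, I would argue as follows. Suppose $\xi^\sigma=\lambda\xi$ for some $\lambda\in\FF_q^*$. Iterating $t:=[\FF_q:\FF_s]$ times and using $\sigma^t=1$ gives
\[ \xi=\xi^{\sigma^t}=\lambda\,\lambda^\sigma\cdots\lambda^{\sigma^{t-1}}\,\xi=N_{\FF_q/\FF_s}(\lambda)\,\xi, \]
which forces $N_{\FF_q/\FF_s}(\lambda)=1$. By Hilbert~90 applied to the cyclic extension $\FF_q/\FF_s$, there exists $\mu\in\FF_q^*$ with $\lambda=\mu/\mu^\sigma$; setting $\eta:=\mu\xi$ then gives $\eta^\sigma=\mu^\sigma\lambda\,\xi=\mu\,\xi=\eta$, so $[\xi]=[\eta]$ admits a $\sigma$-fixed representative and therefore belongs to the $\sigma$-fixed subgeometry.

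The only non-routine ingredient is the Hilbert~90 rescaling; every other step is bookkeeping. Combining the two inclusions yields $\theta_I=\frac{s^{n+1}-1}{s-1}$ from the standard point count for $\PG(n,s)$.
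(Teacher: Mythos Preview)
Your proof is correct and follows essentially the same route as the paper: both identify the set $\{[\xi]:[\xi^\sigma]=[\xi]\}$ with the $\sigma$-fixed subgeometry $\PG(n,s)$ of $\PG(V^*)$ and then count. The only difference is that the paper simply cites the well-known fact (stated after Definition~\ref{automorfismo notazione}) that this subgeometry is isomorphic to $\PG(n,s)$, whereas you supply the standard justification via Hilbert~90 rescaling; this makes your argument more self-contained but not genuinely different in strategy.
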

\begin{proof}
  By Lemma~\ref{l:c} we need to compute the number of fixed points
  of the semilinear collineation $\vartheta_{I,\sigma}.$
  A point $[\xi]\in\PG(V^*)$ is fixed by $\vartheta_{I,\sigma}$if and only if it is
  a point of the $\sigma$-fixed subgeometry $\mathrm{Fix}(\sigma)\cong \PG(n,s).$
  This completes the proof.
\end{proof}
 \begin{lemma}
  \label{c:2}
  Suppose that $M\in M_{n+1}(q)$ is invertible. The following hold.
 \begin{enumerate}
\item\label{c:2p1} If $W$ is a vector subspace of $V^*$ such that $[\xi^{\sigma}]=[\xi M]\,\,\forall \xi \in W$ then $\dim W \leq 1.$
\item\label{c:2p2} If $U$ is a subspace of $V^*$ with   $\dim(U)=2$ then $\PG(U)$ contains at most $s+1=|\FF_s|+1$ points $[\xi]$ such that $[\xi^{\sigma}]=[\xi M].$
\end{enumerate}

\end{lemma}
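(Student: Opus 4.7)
The plan is to translate the projective condition $[\xi^{\sigma}]=[\xi M]$ into the vector equation $\xi^{\sigma}=\lambda_{\xi}\,\xi M$ for some scalar $\lambda_{\xi}\in\FF_q^*$ (well defined up to the choice of representative of $[\xi]$, and unique once the representative is fixed, since $M$ is invertible and so $\xi M\neq 0$), and then examine how the scalars $\lambda_{\xi}$ must combine when one takes linear combinations. The point is that the left hand side is $\sigma$-semilinear in $\xi$ while the right hand side is $\FF_q$-linear in $\xi$, so the identity can only hold on a thin subset.

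For part~(\ref{c:2p1}), I would take $\xi_1,\xi_2\in W$ linearly independent with $\xi_i^{\sigma}=\lambda_i\,\xi_iM$ ($i=1,2$). Applying $\sigma$ to $\xi_1+\xi_2\in W$ and using the identities for $\xi_1,\xi_2$, one finds $\lambda_1\xi_1 M+\lambda_2\xi_2 M$ must be a scalar multiple of $(\xi_1+\xi_2)M$; since $M$ is invertible, $\xi_1M$ and $\xi_2M$ are linearly independent, and this forces $\lambda_1=\lambda_2=:\lambda$. Repeating the argument with an arbitrary $\alpha\xi_1+\beta\xi_2\in W$ yields the requirement that $(\alpha^{\sigma},\beta^{\sigma})$ is proportional to $(\alpha,\beta)$ in $\FF_q^2$ for every choice of $\alpha,\beta\in\FF_q$, and specializing to $\alpha=1$, $\beta=c$ arbitrary gives $c^{\sigma}=c$ for all $c\in\FF_q$, contradicting $\sigma\neq 1$.

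For part~(\ref{c:2p2}), the strategy is to parametrize $\PG(U)$ by a single affine coordinate once two solutions are known. If at most one point of $\PG(U)$ satisfies the condition we are done, so assume there are distinct $[\xi_1],[\xi_2]\in\PG(U)$ with $\xi_i^{\sigma}=\lambda_i\xi_iM$; these vectors span $U$. For $[\alpha\xi_1+\beta\xi_2]$ with $\alpha,\beta\in\FF_q^*$ to also satisfy the condition, expanding $(\alpha\xi_1+\beta\xi_2)^{\sigma}=\alpha^{\sigma}\lambda_1\xi_1M+\beta^{\sigma}\lambda_2\xi_2M$ and requiring it to be a scalar multiple of $(\alpha\xi_1+\beta\xi_2)M$ leads, after setting $\gamma:=\beta/\alpha$, to the single equation
\[
\frac{\gamma^{\sigma}}{\gamma}=\frac{\lambda_1}{\lambda_2}.
\]
The map $\psi\colon \FF_q^*\to\FF_q^*$, $\gamma\mapsto\gamma^{\sigma}/\gamma$, is a group homomorphism with kernel $\FF_s^*$, so every fiber has size $0$ or $s-1$. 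The admissible values of $\gamma$ thus contribute at most $s-1$ projective points of the form $[\xi_1+\gamma\xi_2]$ distinct from $[\xi_1]$ and $[\xi_2]$, and adding $[\xi_1]$ and $[\xi_2]$ themselves gives the bound $s+1$.

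The main obstacle, conceptually, is just keeping track of the projective/affine bookkeeping: the scalar $\lambda_{\xi}$ in $\xi^{\sigma}=\lambda_{\xi}\xi M$ depends on the chosen representative, so one has to be careful when combining equations for different $\xi$. This is handled by fixing representatives $\xi_1,\xi_2$ once and for all, and then reading every other representative in the form $\alpha\xi_1+\beta\xi_2$; the invertibility of $M$ is used precisely to conclude the linear independence of $\xi_1M$ and $\xi_2M$ and thus isolate the scalars.
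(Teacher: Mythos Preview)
Your proposal is correct and follows essentially the same approach as the paper: both reduce to the equation $\gamma^{\sigma-1}=\lambda_1\lambda_2^{-1}$ on a line and use that the kernel of $\gamma\mapsto\gamma^{\sigma-1}$ is $\FF_s^*$ to bound the solutions by $s-1$ (plus the two basis points). The only minor variation is in Part~\ref{c:2p1}: the paper derives it as a consequence of the count (at most $s+1<q+1$ points on the line, so not all of $\PG(W)$ can satisfy the condition), whereas you argue more directly by first forcing $\lambda_1=\lambda_2$ via $\xi_1+\xi_2\in W$ and then obtaining $c^{\sigma}=c$ for all $c$; both are fine and rest on the same linear-independence observation for $\xi_1M,\xi_2M$.
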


 \begin{proof}
Part~\ref{c:2p1}.  Suppose by way of contradiction that $\dim W\geq2$ and let $\xi_1,\xi_2$ be two linearly independent
  vectors of $W$
  such that $[\xi_i^{\sigma}]=[\xi_i M]$ for $i=1,2$. Consider the line $\ell=[\langle \xi_1,
  \xi_2\rangle]$ spanned by $[\xi_1]$ and $[\xi_2]$.  Then,
  \begin{equation}\label{e15} \xi_1^{\sigma}=\lambda_1\xi_1 M,\qquad
    \xi_2^{\sigma}=\lambda_2\xi_2 M
  \end{equation}
  for some $\lambda_1,\lambda_2\in \FF_q\setminus \{0\}$.

  We want to determine the number of points
  in $\ell$ satisfying~\eqref{tce} and different from $[\xi_1]$
  and $[\xi_2]$, i.e. the number of elements
  $\gamma\in\FF_q\setminus\{0\}$ for which
  there exists $\lambda_{\gamma}\neq0$ such that
  \begin{equation}
    \label{cc}
    (\xi_1+\gamma\xi_2)^{\sigma}=\lambda_{\gamma}(\xi_1+\gamma\xi_2)M.
  \end{equation}
  Observe that, by~\eqref{cc} and~\eqref{e15},
  \[ \lambda_{\gamma}(\xi_1+\gamma\xi_2)M= (\xi_1+\gamma\xi_2)^{\sigma}=\xi_1^{\sigma}+\gamma^{\sigma}\xi_2^{\sigma}=
    (\lambda_1\xi_1+\gamma^{\sigma}\lambda_2\xi_2)M.\]
  As $\xi_1M$ and $\xi_2M$ are linearly independent, it follows that
  $\lambda_{\gamma}=\lambda_1$ and $\gamma^{\sigma}\lambda_2=\lambda_1\gamma$.
  Hence
  \[{\gamma^{\sigma -1}} =\lambda_1\lambda_2^{-1}.\]
  Since the kernel of the group homomorphism $\FF_q^*\to\FF_q^*$ given
  by $x\to x^{\sigma-1}$ is   $\FF_s^*$,
  it is not difficult to see that the above equation (in the unknown $\gamma$) admits either $0$ or $s-1$ solutions.
  As $[\xi_1]$ and $[\xi_2]$  also satisfy~\eqref{tce}, we have that
  the overall number of points in $\ell$ satisfying~\eqref{tce} is at most
  $s-1+2=s+1$. Since $s<q$, this contradicts the hypothesis that
  all $[\xi]\in W$ are fixed. Part 1. is proved.\\
Part~\ref{c:2p2}. Note that $s+1\geq 3.$ If the number of points in $U$ satisfying ~\eqref{tce} is either $0$ or $1$ then  the thesis immediately follows. If the number of points in $U$ satisfying~\eqref{tce} is at least $2$ then we can repeat the same proof as in Part~\ref{c:2p1} to have the thesis.
\end{proof}

\begin{theorem}
\label{t:45}
  Let $W$ be a subspace of $V^*$ with $\dim(W)=r$ and suppose $M$ is invertible.
  Then the number of $[\xi]\in\PG(W)$ such that $[\xi^{\sigma}]=[\xi M]$ is at most  $\frac{s^r-1}{s-1}$.
\end{theorem}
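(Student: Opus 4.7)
The plan is to decompose the fixed-point set
$F_W:=\{[\xi]\in\PG(W):[\xi^\sigma]=[\xi M]\}$
according to a finite $\FF_s^*$-valued invariant, bound each piece by an $\FF_s$-dimension count, and close with a case split that distinguishes the ``full'' case (where equality is attained) from the ``non-full'' case (where the bound is strict).

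For each $[\xi]\in F_W$, write $\xi^\sigma=\lambda\xi M$ for a chosen representative; replacing $\xi$ by $c\xi$ changes $\lambda$ into $c^{\sigma-1}\lambda$. Since $(\sigma-1)(1+\sigma+\dots+\sigma^{t-1})=\sigma^t-1$ is trivial on $\FF_q^*$, the norm $N\colon\FF_q^*\to\FF_s^*$ kills $c^{\sigma-1}$, so $\mathrm{cl}([\xi]):=N(\lambda)\in\FF_s^*$ is a well-defined invariant, yielding a disjoint decomposition $F_W=\bigsqcup_{\alpha\in\FF_s^*} F_W^{(\alpha)}$. For each $\alpha$ fix $\lambda_\alpha\in\FF_q^*$ with $N(\lambda_\alpha)=\alpha$ and put $\tilde F_\alpha:=\{\xi\in W:\xi^\sigma=\lambda_\alpha\xi M\}\cup\{0\}$; since $\sigma$ fixes $\FF_s$ pointwise, $\tilde F_\alpha$ is an $\FF_s$-vector subspace of $W$, of some $\FF_s$-dimension $d_\alpha$. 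The natural map $\tilde F_\alpha\setminus\{0\}\to F_W^{(\alpha)}$, $\xi\mapsto[\xi]$, is surjective (by Hilbert~90, any ratio of norm $1$ has the form $c^{\sigma-1}$, providing the required rescaling) and its fibres have cardinality $s-1$ (two $\FF_q$-proportional elements of $\tilde F_\alpha$ must have their ratio fixed by $\sigma$, hence in $\FF_s^*$). Consequently $|F_W^{(\alpha)}|=(s^{d_\alpha}-1)/(s-1)$.

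Next I show $d_\alpha\le r$, the step where invertibility of $M$ enters essentially. Let $\xi_1,\dots,\xi_k\in\tilde F_\alpha$ be $\FF_s$-linearly independent. A relation $\sum c_i\xi_i=0$ with $c_i\in\FF_q$ yields, after applying $\sigma$, $\lambda_\alpha\bigl(\sum c_i^\sigma\xi_i\bigr)M=0$; invertibility of $M$ forces $\sum c_i^\sigma\xi_i=0$. Hence the kernel $K\subseteq\FF_q^k$ of $(c_i)\mapsto\sum c_i\xi_i$ is stable under the coordinate-wise $\sigma$-action, and Galois descent gives $K=(K\cap\FF_s^k)\otimes_{\FF_s}\FF_q$. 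A nonzero $K$ would therefore contain a nontrivial $\FF_s$-relation among the $\xi_i$, contradicting their $\FF_s$-independence. So $k\le r$.

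Finally, a case split closes the argument. If $d_\alpha=r$ for some $\alpha$, then an $\FF_s$-basis of $\tilde F_\alpha$ is also an $\FF_q$-basis of $W$ by the previous step; writing any $[\xi]\in F_W$ in that basis, the relation $\xi^\sigma=\mu\xi M$ forces $\lambda_\alpha c_i^\sigma=\mu c_i$ for each $i$, whence $c_i/c_j\in\FF_s^*$ for any two nonzero coordinates. Hence $[\xi]\in F_W^{(\alpha)}$, the other classes are empty, and $|F_W|=(s^r-1)/(s-1)$, with equality attained. Otherwise $d_\alpha\le r-1$ for every $\alpha\in\FF_s^*$, and summing over the $s-1$ classes gives $|F_W|\le(s-1)\cdot\frac{s^{r-1}-1}{s-1}=s^{r-1}-1<\frac{s^r-1}{s-1}$. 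The main obstacle throughout is the content of the third paragraph: the jump from $\FF_s$- to $\FF_q$-linear independence inside $\tilde F_\alpha$ hinges on $M$ being invertible and on Galois descent, and without this step the $(s-1)$ classes would naively contribute a total of order $s^r-1$, off by a factor of $s-1$ from the desired bound.
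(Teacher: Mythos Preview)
Your proof is correct and takes a genuinely different route from the paper's. The paper argues by induction on $r$: it reformulates the problem as counting fixed points of the semilinear collineation $\vartheta_{M,\sigma}([\zeta])=[\zeta^{\sigma^{-1}}M]$ on $\PG(W^\sigma)$, handles $r\le2$ by the explicit line computation of Lemma~\ref{c:2}, and for $r>2$ splits according to whether a hyperplane of $\PG(W^\sigma)$ is stabilised, finishing with a count of fixed lines through an external fixed point. Your argument is instead direct and algebraic: you slice $F_W$ by the norm invariant $N(\lambda)\in\FF_s^*$, identify each slice with the $\FF_s$-projective space of an $\FF_s$-subspace $\tilde F_\alpha\subseteq W$, and use Galois descent (the $\sigma$-stability of the relation kernel, which is where invertibility of $M$ enters) to cap $\dim_{\FF_s}\tilde F_\alpha$ by $r$. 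The paper's approach stays closer to the geometric language used elsewhere in the article and reuses Lemma~\ref{c:2}; yours is shorter, isolates exactly where $M$ being invertible matters, and yields a sharper dichotomy as a byproduct: either $|F_W|=\frac{s^r-1}{s-1}$ on the nose, or $|F_W|\le s^{r-1}-1$, a gap the inductive argument does not make visible. One small remark: your norm $N$ is the norm of $\FF_q/\FF_s$ relative to the generator $\sigma$ of $\mathrm{Gal}(\FF_q/\FF_s)$; this agrees with the paper's $N(a)=a^{s+1}$ in the special case $\sigma^2=1$ treated there, but you are tacitly using that $\sigma$ generates the full Galois group over its fixed field (true since $\FF_s=\mathrm{Fix}(\sigma)$), which is what makes Hilbert~90 and descent available.
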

\begin{proof}
Note that  if $M=I$, then the thesis follows from Lemma~\ref{l:I}.

We proceed by induction on $r.$
If $r\leq 2$ the result follows from Lemma~\ref{c:2}. Suppose $r>2.$ By Lemma~\ref{l:c}, we will count the number of fixed points of $\vartheta_{M,\sigma}$ in $\PG(W^\sigma).$
 Assume by induction that the number of fixed points of $\vartheta_{M,\sigma}$
  in any subspace of dimension $r'<r$ is at most $\frac{s^{r'}-1}{s-1}$.
  We distinguish three cases:
  \begin{enumerate}
    \item No hyperplane of $\PG(W^\sigma)$ is stabilized by $\vartheta_{M,\sigma}$. Hence, the set of fixed points by $\vartheta_{M,\sigma}$ spans a subspace of $\PG(W^\sigma)$  of codimension at least $2.$ The thesis follows directly by the inductive hypothesis.
    \item
      There is a hyperplane $[H]$ of $\PG(W^\sigma)$ stabilized
      by $\vartheta_{M,\sigma}$ and all fixed points of $\vartheta_{M,\sigma}$ are contained
      in $[H]$; then, by inductive hypothesis on $[H]$ the number of
      fixed points of $W^\sigma$ is at most $\frac{s^{r-1}-1}{s-1}\leq\frac{s^r-1}{s-1}$.
    \item
      There is a hyperplane $[H]$ stabilized by $\vartheta_{M,\sigma}$ and
      at least one point
      $[\xi^\sigma]\in\PG(W^\sigma)\setminus[H]$ fixed  by $\vartheta_{M,\sigma}.$
If $[\xi^\sigma]$ is the unique fixed point of $\PG(W^\sigma)$ then the thesis follows.
Otherwise, suppose $[\zeta^\sigma]\not=[\xi^\sigma]$ is a  point fixed by  $\vartheta_{M,\sigma}.$
      Then the line $\ell:=[\langle\zeta^\sigma,\xi^\sigma\rangle]$ is stabilized by $\vartheta_{M,\sigma}$
      and not contained in $[H]$, so the point  $\ell\cap[H]$ is fixed by $\vartheta_{M,\sigma}.$
      By
      inductive hypothesis, the number of fixed points in $[H]$ is at most
 $\frac{s^{r-1}-1}{s-1}$, so there exists at most $\frac{s^{r-1}-1}{s-1}$ such lines through
      $[\xi^\sigma]$.
      By Lemma~\ref{c:2}, every line of $\PG(W^\sigma)$ through
      $[\xi^\sigma]$ contains at most $s$ fixed points distinct from $[\xi^\sigma]$.
      So, the number of fixed points by $\vartheta_{M,\sigma}$ is at most
      \[ 1+s\frac{s^{r-1}-1}{s-1}=\frac{s^r-s+s-1}{s-1}=\frac{s^r-1}{s-1}. \]
    \end{enumerate}
  \end{proof}

  \noindent {\bf{* Case B: $M$ is an arbitrary matrix.}}

\begin{theorem} \label{m(r)}
  Let
  $m:{\mathbb N}\to{\mathbb N}$ be the function
    \begin{equation}
      \label{e:toc}
      m(r):=\frac{q^{n+1-r}-1}{q-1}+\frac{s^r-1}{s-1}.
    \end{equation}
    Then, for any matrix
    $M\in M_{n+1}(q)$ and $r=1,\dots,n+1$ we have
    \[ \max \{\theta_M: \rank(M)=r \}\leq m(r). \]
\end{theorem}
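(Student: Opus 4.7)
The plan is to split $\theta_M$ according to whether $\xi M=0$ or not. Each $\xi\in\ker M:=\{\xi\in V^*\colon \xi M=0\}$ satisfies $[\xi^{\sigma}]\subseteq[\xi M]=\PG(V)$ trivially, contributing exactly $|\PG(\ker M)|=\frac{q^{n+1-r}-1}{q-1}$ points. It remains to bound
\[ B:=|\{[\xi]\in\PG(V^*)\colon \xi M\neq 0,\ [\xi^{\sigma}]=[\xi M]\}| \]
by $\frac{s^r-1}{s-1}$. Let $R:=\{\zeta M\colon \zeta\in V^*\}$, of dimension $r$. For $[\xi]\in B$, the relation $\xi^{\sigma}=\lambda\,\xi M$ forces $\xi^{\sigma}\in R$, hence $\xi\in R^{\sigma^{-1}}$. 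The map $[\xi]\mapsto[\xi M]$ is then a bijection between $B$ and
\[ F:=\{[\eta]\in\PG(R)\colon [\eta^{\sigma^{-1}}M]=[\eta]\}, \]
with two-sided inverse $[\eta]\mapsto[\eta^{\sigma^{-1}}]$; note that $[\eta]\in F$ already forces $\eta^{\sigma^{-1}}M\neq 0$. Thus $|B|=|F|$, and it suffices to bound the projective fixed points of the $\sigma^{-1}$-semilinear map $\Phi\colon R\to R$, $\eta\mapsto\eta^{\sigma^{-1}}M$.

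\textbf{Restoring bijectivity.} Let $R_F:=\langle\{\eta\in R\colon [\eta]\in F\}\rangle_{\FF_q}\subseteq R$, of dimension $r'\leq r$, and pick a basis $\eta_1,\dots,\eta_{r'}$ of $R_F$ consisting of fixed-point representatives, so $\Phi(\eta_i)=\lambda_i\eta_i$ with $\lambda_i\in\FF_q\setminus\{0\}$. Since $\Phi$ is $\sigma^{-1}$-semilinear, the identity
\[ \Phi\Bigl(\sum_i a_i\eta_i\Bigr)=\sum_i a_i^{\sigma^{-1}}\lambda_i\eta_i \]
shows both that $\Phi(R_F)\subseteq R_F$ and that $\Phi|_{R_F}$ is injective (vanishing of the right-hand side forces each $a_i^{\sigma^{-1}}\lambda_i=0$, hence $a_i=0$). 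So $\Phi|_{R_F}\colon R_F\to R_F$ is a $\sigma^{-1}$-semilinear bijection, and $F$ coincides with the set of its projective fixed points. In the chosen basis, $\Phi|_{R_F}$ acts on coordinate row vectors as $c\mapsto c^{\sigma^{-1}}D$ with $D=\mathrm{diag}(\lambda_1,\dots,\lambda_{r'})$; the fixed-point condition $[c^{\sigma^{-1}}D]=[c]$ is equivalent, upon applying $\sigma$ and rearranging, to $[c^{\sigma}]=[c\,D^{\sigma}]$, which is exactly the hypothesis of Theorem~\ref{t:45} read in ambient dimension $r'$ with the invertible matrix $D^{\sigma}$. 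That theorem then yields $|F|\leq\frac{s^{r'}-1}{s-1}\leq\frac{s^r-1}{s-1}$, and the estimate $\theta_M\leq m(r)$ follows by adding the kernel contribution.

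\textbf{The main obstacle}, compared with the invertible case covered by Theorem~\ref{t:45}, is precisely that $M$ may have a nontrivial kernel, so the semilinear map $\Phi$ is only a partial map on $\PG(R)$ and Theorem~\ref{t:45} cannot be applied verbatim. The decisive move is to pass to the subspace $R_F$ spanned by the fixed-point vectors themselves: on this $\Phi$-invariant subspace $\Phi$ becomes a bijection essentially by construction (its eigenvalues on the chosen basis are the nonzero scalars $\lambda_i$), which restores exactly the invertibility hypothesis needed to invoke Theorem~\ref{t:45} and close the count.
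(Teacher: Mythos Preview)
Your proof is correct and follows essentially the same strategy as the paper: split off the kernel contribution, take the span of the remaining solution vectors to obtain a subspace of dimension at most $r$ on which the relevant semilinear map is injective (via the eigenvector-basis computation), and then invoke Theorem~\ref{t:45}. The paper works directly with $U=\langle\xi:[\xi^\sigma]=[\xi M]\rangle\subseteq V^*$ and shows $U\cap\ker M=0$, whereas you first pass to the row space $R$ and its subspace $R_F$ and then diagonalize explicitly; this is a minor repackaging, with the benefit that your coordinate change makes the appeal to Theorem~\ref{t:45} fully precise where the paper's parenthetical ``$M$ restricted to $U$ is invertible'' is a little informal.
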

\begin{proof}
If $\xi\in\ker(M)$ then $[\xi M]=[0]=\PG(V^*)$, so the condition $[\xi^{\sigma}]\subseteq [\xi M]$ is always satisfied. Since $\rank(M)=r$, clearly $|[\ker(M)]|=\frac{q^{n+1-r}-1}{q-1}.$

Consider the following subspace of $V^*$:
\[U:=\langle \xi\in V^*\colon [\xi^{\sigma}]=[\xi M]\rangle=\]
\[ \langle \xi\in V^*\colon \lambda_\xi\xi^{\sigma}=\xi M, {\rm for\,\, some\,\, } 0\not=\lambda_\xi\in \FF_q \rangle.\]	

We claim that $U\cap\ker(M)=\mathbf{0}$.
Indeed, let ${\mathfrak B}:=(\xi_1,\dots,\xi_l)$ be a basis of $U$
consisting of vectors such that $[\xi_i^{\sigma}]=[\xi_i M]$.
Then, ${\mathfrak B}^{\sigma}=(\xi_1^{\sigma},\dots,\xi_l^{\sigma})$
is a basis of
$U^{\sigma}$ and $\xi_i M=\lambda_i\xi_i^{\sigma}$ for all $i=1,\dots,l$ and
$\lambda_i\in\FF_q\setminus \{0\}$.
Suppose $\zeta\in U\cap\ker(M).$ Then
$\zeta M=\mathbf{0}$ and $\zeta=\sum_{i=1}^l \alpha_i\xi_i$
for some $\alpha_i\in\FF_q$.
   It follows that
  \[ \mathbf{0}=\zeta M=\left(\sum_{i=1}^l \alpha_i\xi_i\right) M=\sum_{i=1}^l  \alpha_i (\xi_iM)=
    \sum_i\alpha_i\lambda_i\xi_i^{\sigma}.\]
  Since the vectors $\xi_i^{\sigma}$ are linearly independent, we have that
  $\alpha_1\lambda_1=\dots=\alpha_l\lambda_l=0$;
  as the $\lambda_i$'s are non-zero, this implies
  $\alpha_1=\dots=\alpha_l=0$, that is $\zeta=\mathbf{0}$ and so $U\cap\ker(M)=\{\mathbf{0}\}$.
  This implies $\dim(U)\leq r$. By Theorem~\ref{t:45} (clearly, $M$ restricted to $U$ is invertible) it follows that
  $\PG(U)$ contains at most $\frac{s^r-1}{s-1}$ points satisfying
  $[\xi^{\sigma}]=[\xi M]$.
  The thesis now follows.
\end{proof}
\begin{theorem}
  \label{mtt}
  The maximum number of points $[\xi]\in \PG(V^*)$ such that $[\xi^{\sigma}]\subseteq[\xi M]$ is
  \begin{enumerate}
  \item $m(3)=s^2+s+1$ if $\sigma^2=1$ and $n=2$; or
  \item $m(1)=\frac{q^{n}-1}{q-1}+1=q^{n-1}+\dots+2$,
    if $\sigma^2\neq 1$ or $n>2$.
  \end{enumerate}
\end{theorem}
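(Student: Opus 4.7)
The plan is to combine the upper bound of Theorem~\ref{m(r)} with explicit constructions realizing the claimed values. By Theorem~\ref{m(r)}, $\theta_M\leq m(\rank(M))$, so it suffices to compute $\max_{1\leq r\leq n+1}m(r)$ and exhibit a matrix attaining this maximum in each case.

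First I would analyze the integer function $m(r)=\frac{q^{n+1-r}-1}{q-1}+\frac{s^r-1}{s-1}$ via the telescoping identity
\[ m(r+1)-m(r)=s^r-q^{n-r}. \]
Since $s^r$ is strictly increasing and $q^{n-r}$ is strictly decreasing in $r$, this difference is itself strictly increasing, so $m$ is unimodal on $\{1,\dots,n+1\}$ (first decreasing, then increasing) and its maximum is attained at $r=1$ or $r=n+1$. Writing $q=s^t$ with $t\geq 2$, I would compare
\[ m(1)=\frac{q^n-1}{q-1}+1 \quad \text{and} \quad m(n+1)=\frac{s^{n+1}-1}{s-1}. \]
In the borderline case $\sigma^2=1$ and $n=2$ (so $t=2$ and $q=s^2$), a direct calculation gives $m(3)-m(1)=s-1>0$, so the maximum equals $m(3)=s^2+s+1$. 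Whenever $\sigma^2\neq 1$ (i.e.\ $t\geq 3$) or $n\geq 3$, the inequality $t(n-1)>n$ holds, so the leading term $q^{n-1}=s^{t(n-1)}$ of $m(1)$ strictly exceeds the leading term $s^n$ of $m(n+1)$; a routine bound on the remaining lower-order terms then gives $m(1)>m(n+1)$.

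For achievability, two constructions suffice. To realize $m(n+1)$ in case (1), I would take $M=I$: the condition $[\xi^\sigma]\subseteq[\xi I]=[\xi]$ (both hyperplanes, so actually an equality) characterizes the points of the $\sigma$-fixed subgeometry of $\PG(V^*)$, which is isomorphic to $\PG(n,s)$ and has cardinality $\frac{s^{n+1}-1}{s-1}$; for $n=2$ this is exactly $s^2+s+1=m(3)$. To realize $m(1)$ in case (2), I would choose a rank-$1$ matrix $M=v\eta$ with $\eta^{\sigma^{-1}}(v)\neq 0$ (which is possible, e.g., $v=\bee{1}$, $\eta=\bet{1}$). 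Every $\xi\in\ker(M)$ yields $[\xi M]=\PG(V)$, so the containment $[\xi^\sigma]\subseteq[\xi M]$ is automatic; this contributes $\frac{q^n-1}{q-1}$ projective solutions. For $\xi\notin\ker(M)$ we have $\xi M=(\xi v)\eta$, so $[\xi M]=[\eta]$, and equality with the hyperplane $[\xi^\sigma]$ forces $[\xi]=[\eta^{\sigma^{-1}}]$; the nonvanishing condition $\eta^{\sigma^{-1}}(v)\neq 0$ ensures this additional point lies outside $[\ker(M)]$. Summing gives $\theta_M=\frac{q^n-1}{q-1}+1=m(1)$, matching the upper bound. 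The main obstacle is the case analysis comparing $m(1)$ and $m(n+1)$: the inequality reverses exactly at $(n,t)=(2,2)$, so the small cases must be verified by hand; by contrast, the two constructions are routine once the correct extremal rank has been identified.
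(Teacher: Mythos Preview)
Your proposal is correct and follows essentially the same approach as the paper: the telescoping identity $m(r+1)-m(r)=s^r-q^{n-r}$ to establish convexity of $m$, the endpoint comparison of $m(1)$ versus $m(n+1)$ with the sole exception $(n,t)=(2,2)$, and the two witnesses $M=I$ (for $m(n+1)$) and the rank-$1$ matrix $\bee{11}=\bee{1}\bet{1}$ (for $m(1)$). Your presentation is slightly more streamlined in that you run the convexity argument uniformly before splitting cases, and you spell out the rank-$1$ count more explicitly than the paper does, but the underlying ideas coincide.
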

\begin{proof}
  Recall that $q=s^t,\,t>1$
  \begin{itemize}
  \item
    If $n=2$, then the only possibilities for the rank of
    a non-null matrix $M\in M_3(q)$ are $r\in\{1,2,3\}$.
    So, by~\eqref{e:toc},  the possible values for $m(r)$ are
    \[ m(1)=\frac{q^2-1}{q-1}+1=q+2=s^t+2,\qquad
      m(2)=1+\frac{s^2-1}{s-1}=s+2,\]
    \[
      m(3)=\frac{s^3-1}{s-1}=s^2+s+1. \]
     If $t=2$ then $q=s^2$, i.e. $\sigma^2=1$ and
    $s^2+s+1>q+2=s^2+2>s+2$. 
    In this case $\theta_{\max}=\theta_I=m(3)=s^2+s+1$.
    \par
    If $t>2$, then
    $q+2=s^t+2\geq s^3+2> s^2+s+1>s+2$ and the maximum is $m(1)=q+2$.
    We can now check that if
    $M=\bee{11}$, then $\theta_M=\theta_{\max}=m(1)$.
  \item
    Suppose now $n>2$. We want to study the sequence
    \begin{equation} \label{std} m(1),\dots,
      m(r),\dots,m(n+1)\end{equation}
    of possible
  maxima as $r=\rank(M)$ ranges from $1$ to $n+1$; as such, we compare two
  successive terms in~\eqref{std} using~\eqref{e:toc}, namely
  \[ m(r):=\frac{q^{n+1-r}-1}{q-1}+\frac{s^r-1}{s-1}=
    \frac{s^{(n+1-r)t}-1}{s^t-1}+\frac{s^r-1}{s-1} \]
  and
\[
  m(r+1):=\frac{q^{n+1-r-1}-1}{q-1}+\frac{s^{r+1}-1}{s-1}=
  \frac{s^{(n-r)t}-1}{s^t-1}+\frac{s^{r+1}-1}{s-1}. \]
  Subtracting the first equation from
  the second we get
  \[ m(r+1)-m(r)=s^{r}-s^{t(n-r)}. \]
  This is non-positive if and only if
  $r\leq t(n-r)$, that is $r\leq\frac{tn}{t+1}$.
  This means that as $r$ grows
  the value of $m(r)$ is first decreasing and then increasing; so, the
  sequence $m(r)$ with $r=1,\dots,n+1$
  is convex and its maximum must be attained by values
  on the boundary of its domain, namely either for $r=1$ or $r=n+1$.
  In particular $m(1)=
  \frac{s^{tn}-1}{s^t-1}+1=s^{t(n-1)}+\dots+2$
  and
  $m(n+1)=\frac{s^{n+1}-1}{s-1}=s^{n}+\dots+1$.
  Comparing these two latter quantities, we see that
  $s^{tn-t}>s^{n}$ if $n>\frac{t}{t-1}$. However, $\frac{t}{t-1}\leq 2$.
  So, if $n>2$ the maximum of $m(r)$ is attained for $r=1$.
  We also have $\theta_{\bee{11}}=m(1)$, so this is actually the maximum
  for $\theta_{\max}$.
  \end{itemize}
\end{proof}
\begin{remark}
  As a consequence of Theorem~\ref{mtt} we see that for
  $r=1$ (with $n>2$ or $\sigma^2\not= 1$) or $r=3$ (with $n=2$ and $\sigma^2=1$)
  we have the equality
  \[ \max\{\theta_M: \rank(M)=r\}=m(r). \]
  We conjecture that this holds for all $r=1,\dots,n+1$.
\end{remark}

\subsection{Maximum weight codewords}
By Corollary~\ref{weights}, in order to determine the maximum  weight of $\cC(\Lambda_{\sigma})$ we need
to compute
\[ \min\{\theta_M: M\in M_{n+1}(q)\}. \]

In this section we provide sufficient conditions for
$\min\{\theta_M: M\in M_{n+1}(q)\}$ to be $0$.
When this happens, maximum weight codewords have weight
$q^{n-1}\frac{q^{n+1}-1}{q-1}$.

Observe that if $\rank(M)<n+1$, then $\theta_M>0$.
So we shall assume throughout the section that $M$ is an invertible
matrix. As seen in Lemma~\ref{l:c} with $V=W$, we have
$\theta_M=|\mathrm{Fix}(\vartheta_{M,\sigma})|$ where $\vartheta_{M,\sigma}$ is the collineation defined in~(\ref{teta})
$\vartheta_{M,\sigma}([\zeta])=[\zeta^{\sigma^{-1}}M]$.
So, to get $\theta_M=0$ we want to construct a $\sigma^{-1}$-semilinear collineation
which is fixed-point free.
\begin{lemma}
  Let $\sigma$ and $j$ be as in Definition~\ref{automorfismo notazione}.
  Suppose $\gcd(\frac{q^{n+1}-1}{q-1},s^j-1)>1$.
  Then, there exists
  an invertible matrix $M\in M_{n+1}(q)$ such that $\theta_M=0$.
 \end{lemma}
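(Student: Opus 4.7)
The plan is to realize $\vartheta_{M,\sigma}$ as a $\sigma^{-1}$-semilinear operator coming from the field structure on $\FF_{q^{n+1}}$, and then to use a cyclic-group count to exhibit a fixed-point-free choice.

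First I would note that every invertible $\sigma^{-1}$-semilinear map $T\colon V^*\to V^*$ equals $\vartheta_{M,\sigma}$ for some $M\in\GL(n+1,q)$. Let $\psi_0(\zeta):=\zeta^{\sigma^{-1}}$ be the coordinate-wise $\sigma^{-1}$ in the fixed basis $E^*$; it is $\sigma^{-1}$-semilinear and invertible, so $T\psi_0^{-1}$ is $\FF_q$-linear and equals right-multiplication by a unique $M\in\GL(n+1,q)$, giving $T=\vartheta_{M,\sigma}$. Since $\theta_M=|\mathrm{Fix}(\vartheta_{M,\sigma})|$ depends only on the underlying collineation of $\PG(V^*)$, it suffices to exhibit a single invertible $\sigma^{-1}$-semilinear collineation of $\PG(V^*)$ with no fixed point.

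For the construction, I would identify $V^*$ with $\FF_{q^{n+1}}$ as $\FF_q$-vector spaces and let $\tilde\sigma\colon \FF_{q^{n+1}}\to\FF_{q^{n+1}}$, $x\mapsto x^{s^j}$, be the extension of $\sigma$ to a field automorphism. For $\alpha\in\FF_{q^{n+1}}^*$ the map $T_\alpha(\zeta):=\alpha\,\zeta^{\tilde\sigma^{-1}}$ is invertible and $\sigma^{-1}$-semilinear. A projective fixed point of $T_\alpha$ is a nonzero $\zeta$ with $\alpha\zeta^{\tilde\sigma^{-1}}=\lambda\zeta$ for some $\lambda\in\FF_q^*$; writing $\eta:=\zeta^{\tilde\sigma^{-1}}$, so that $\zeta=\eta^{\tilde\sigma}$, this becomes $\eta^{s^j-1}=\alpha/\lambda$. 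Consequently, $T_\alpha$ admits a projective fixed point if and only if $\alpha$ belongs to the subgroup $H:=\FF_q^*\cdot(\FF_{q^{n+1}}^*)^{s^j-1}$ of $\FF_{q^{n+1}}^*$.

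To conclude I would compute the index of $H$. If $\omega$ generates $\FF_{q^{n+1}}^*$ (order $N:=q^{n+1}-1$) and we set $e:=N/(q-1)$ and $d:=\gcd(N,s^j-1)$, then $\FF_q^*=\langle\omega^e\rangle$, $(\FF_{q^{n+1}}^*)^{s^j-1}=\langle\omega^d\rangle$, so $H=\langle\omega^{\gcd(e,d)}\rangle$ has index $\gcd(e,d)$ in $\FF_{q^{n+1}}^*$; since $e\mid N$, this index reduces to $\gcd(e,s^j-1)$, which is $>1$ by hypothesis. Picking any $\alpha\in\FF_{q^{n+1}}^*\setminus H$ makes $T_\alpha$ fixed-point free, and the corresponding $M$ from the first step satisfies $\theta_M=0$. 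The only real obstacle is the opening realization step: one has to notice that $\theta_M$ is an invariant of the semilinear collineation, not of the matrix in a given basis, which is what lets us represent $T_\alpha$ by some $M\in\GL(n+1,q)$ even though the field automorphism $\tilde\sigma^{-1}$ does not coincide with the coordinate-wise $\sigma^{-1}$ in any basis of $V^*$.
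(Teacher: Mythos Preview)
Your proof is correct and follows essentially the same route as the paper: identify $V^*$ with $\FF_{q^{n+1}}$, build the desired semilinear collineation as Frobenius composed with multiplication by a field element, and use a cyclic-group computation to rule out fixed points. The only cosmetic differences are that the paper works with the inverse collineation $\psi^{-1}([\zeta])=[\zeta^{s^j}\omega]$ (so with $\sigma$ rather than $\sigma^{-1}$) and takes the single choice $\alpha=\omega$ primitive, arguing directly via orders that $\omega^{q-1}$ cannot lie in the image of the $(s^j-1)(q-1)$-power map; you instead characterize the full subgroup $H=\FF_q^*\cdot(\FF_{q^{n+1}}^*)^{s^j-1}$ and read off its index. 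Your realization step (writing an arbitrary $\sigma^{-1}$-semilinear bijection as $\vartheta_{M,\sigma}$) is exactly what the paper invokes in its final sentence, so your closing caveat about $\tilde\sigma^{-1}$ versus coordinate-wise $\sigma^{-1}$ is well-placed but not an obstacle.
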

 \begin{proof}
   We need to show that there exists at least one
   $\sigma^{-1}$-semilinear collineation of $\PG(V^*)$ which is fixed-point-free. Observe that $\psi$ is a $\sigma^{-1}$-semilinear collineation
   which is fixed-point-free if and only if
   $\psi^{-1}$ is a $\sigma$-semilinear collineation which is
   also fixed-point-free. We shall now construct such
   $\psi^{-1}$.

   The vector space $V^*$ is isomorphic to the field $\FF_{q^{n+1}}$,
   regarded as a vector space over $\FF_q$.
   We identify the points $[\zeta]$ of $\PG(V^*)$ with elements
   of $\FF_{q^{n+1}}/\FF_{q}$.
   Let $\omega$ be a primitive element of $\FF_{q^{n+1}}$
   and define $\psi^{-1}:\PG(V^*)\to\PG(V^*)$ as the $\FF_q$-semilinear
   collineation
   $[x]\to [\zeta^{s^j}\omega]$.
   A point $[\zeta]\in\FF_{q^{n+1}}$ is
   fixed by $\psi^{-1}$ if and only if $\exists\lambda\in\FF_q$ with
   $\lambda\zeta=\zeta^{s^j}\omega$.
   This implies $\zeta^{s^j-1}\omega\in\FF_q$, that is
   $(\zeta^{s^j-1}\omega)^{q-1}=1$.
   In particular, the order of $\omega^{q-1}$ in $\FF_{q^{n+1}}^*$ must be the same
   as the order of $\zeta^{(s^j-1)(q-1)}$.
   Since $\omega$ is a primitive element of $\FF_{q^{n+1}}$,
   the order of $\omega^{q-1}$ is $(q^{n+1}-1)/(q-1)$.
   On the other hand,
   the order of $\zeta^{(s^j-1)(q-1)}$ divides
   \[ \frac{q^{n+1}-1}{\gcd((s^j-1)(q-1),q^{n+1}-1)}=
     \frac{(q^{n+1}-1)}{(q-1)\gcd(s^j-1,\frac{q^{n+1}-1}{q-1})}. \]
   It follows that if $\gcd(s^j-1,\frac{q^{n+1}-1}{q-1})>1$,
   then $\psi^{-1}$ does not have any fixed points.
   Since $\psi^{-1}$ is a semilinear collineation, it can be written as
   $\psi^{-1}([\zeta])=[\zeta^{\sigma}M^{-1}]$ for some invertible matrix $M$.
   It follows that also $\psi([\zeta])=[\zeta^{\sigma^{-1}}M]$ is fixed-point free;
   consequently  $\theta_M=0$. This completes the proof.
 \end{proof}
 \begin{corollary}
\label{co-odd}
   Suppose both $q$ and $n$ to be odd. Then there exists $M\in M_{n+1}(q)$ such
   that $\theta_M=0$.
 \end{corollary}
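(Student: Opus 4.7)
The plan is to verify that the hypothesis of the preceding lemma, namely $\gcd\bigl(\tfrac{q^{n+1}-1}{q-1},\,s^j-1\bigr)>1$, is automatically satisfied whenever $q$ and $n$ are both odd. Once this is done, the conclusion is immediate.

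First, I would observe that $s$ must be odd: since $q=s^t$ with $q$ odd, the characteristic of $\FF_s$ coincides with that of $\FF_q$, which is odd, so $s$ itself is odd. It follows that $s^j$ is odd and hence $s^j-1$ is even. Next, I would expand
\[
\frac{q^{n+1}-1}{q-1}=1+q+q^2+\cdots+q^{n},
\]
a sum of $n+1$ terms. Since $q$ is odd, every summand $q^i$ is odd; since $n$ is odd, the number of summands $n+1$ is even, so the total sum is even.

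Therefore both integers $\tfrac{q^{n+1}-1}{q-1}$ and $s^j-1$ are divisible by $2$, giving
\[
\gcd\!\left(\frac{q^{n+1}-1}{q-1},\,s^j-1\right)\geq 2>1.
\]
Applying the previous lemma then yields an invertible $M\in M_{n+1}(q)$ with $\theta_M=0$, as required. There is no genuine obstacle here; the only small point worth spelling out is the parity bookkeeping of the summands in the geometric series, which is the reason the hypothesis "$n$ odd" is needed (if $n$ were even, the sum would contain an odd number of odd terms and would itself be odd, so the $\gcd$ argument would fail).
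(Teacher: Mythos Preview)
Your proof is correct and follows exactly the same approach as the paper: both arguments observe that $s^j-1$ and $\frac{q^{n+1}-1}{q-1}$ are even under the stated hypotheses and then invoke the preceding lemma. You have simply spelled out in detail the parity check that the paper states in a single sentence.
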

 \begin{proof}
   Under these assumptions both $s^j-1$ and $\frac{q^{n+1}-1}{q-1}$ are
   even. The previous lemma yields the result.
 \end{proof}
 As seen in Section~\ref{hyperplanes},
 when $n$ is odd and $\sigma^2=1$
 the geometry $\lrootG$ might admit semi-standard spread type hyperplanes
 $\cH_{\phi}$ arising from $\vep_{\sigma}$ of the form $\vep_{\sigma}^{-1}(M^{\perp})$.
 As the following lemma shows, these hyperplanes correspond to words of maximum weight.
 \begin{lemma}
\label{sph-w}
Suppose $\cH_{\phi}=\vep_{\sigma}^{-1}(M^{\perp})$ is a semi-standard spread type hyperplane of $\lrootG$ 
 where $\phi$ is a fixed-point-free semilinear involutory collineation of $\PG(V^*)$ defined by the matrix $M$.
 Then $c_M$ is a maximum weight codeword of $\cC(\Lambda_{\sigma}).$
\end{lemma}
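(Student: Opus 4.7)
The plan is to reduce the claim to computing $\theta_M$ via Lemma~\ref{pesi} and Corollary~\ref{weights}, and then to use the spread structure to force $\theta_M=0$. From the relation $M^{\sigma}=M^{-1}$ we have that $M$ is invertible, so the kernel contribution to $\theta_M$ vanishes and, by Lemma~\ref{l:c} applied with $W=V^*$,
\[
\theta_M=|\{[\xi]\in\PG(V^*)\colon [\xi^{\sigma}]=[\xi M]\}|.
\]
By Corollary~\ref{weights}, showing $\theta_M=0$ yields $\mathrm{wt}(c_M)=q^{n-1}(q^{n+1}-1)/(q-1)$, which matches the upper bound on the weight and thus identifies $c_M$ as a maximum weight codeword.

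The main step will be to translate the analytic condition $[\xi^{\sigma}]=[\xi M]$ into a geometric statement about the hyperplane $H:=[\ker\xi]$ of $\PG(V)$. Using the identifications $\ker(\xi^{\sigma})=(\ker\xi)^{\sigma}$ and $\ker(\xi M)=M^{-1}(\ker\xi)$, the equality $[\xi^{\sigma}]=[\xi M]$ is equivalent to $M(H^{\sigma})=H$, i.e.\ $\phi(H)=H$ where $\phi([x])=[Mx^{\sigma}]$ is the involutory collineation defining the semi-standard spread $S_{\phi}$. So any putative $[\xi]$ contributing to $\theta_M$ produces a $\phi$-invariant hyperplane of $\PG(V)$.

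The obstacle is then to rule out such a hyperplane. For this I would argue as follows: if $H$ is $\phi$-invariant, then for every $[p]\in H$ we have $\phi([p])\in H$, so the spread line $\ell_{p}=\langle p,\phi(p)\rangle$ lies entirely in $H$. Since the $\ell_{p}$'s form the spread $S_{\phi}$ of $\PG(V)$, those contained in $H$ form a line-spread of $H$. But $H$ has projective dimension $n-1$, and a projective space admits a line-spread only if its dimension is odd; since $n$ is odd, $n-1$ is even, and no such spread can exist. This contradiction forces the nonexistence of $[\xi]$ with $[\xi^{\sigma}]=[\xi M]$, hence $\theta_M=0$.

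Combining the three steps, one concludes $|[M^{\perp}]\cap\Lambda_{\sigma}|=(q^{n+1}-1)(q^{n-1}-1)/(q-1)^2$ and so
\[
\mathrm{wt}(c_M)=N_{\sigma}-\tfrac{(q^{n+1}-1)(q^{n-1}-1)}{(q-1)^2}=\tfrac{q^{n-1}(q^{n+1}-1)}{q-1},
\]
which is the maximum allowed by Corollary~\ref{weights}. The expected main difficulty is the bookkeeping for the $\sigma$-twist in identifying $\ker(\xi^{\sigma})=(\ker\xi)^{\sigma}$ and cleanly deriving $\phi(H)=H$; everything else is a direct application of previously established lemmas and the parity argument on line-spreads.
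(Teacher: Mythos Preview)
Your argument is correct. The translation $[\xi^{\sigma}]=[\xi M]\Leftrightarrow\phi(H)=H$ with $H=[\ker\xi]$ is sound (using $\ker(\xi^{\sigma})=(\ker\xi)^{\sigma}$ and $\ker(\xi M)=M^{-1}(\ker\xi)$, both of which hold), and the parity obstruction---a $\phi$-invariant hyperplane would be partitioned by spread lines, impossible in even projective dimension $n-1$---cleanly forces $\theta_M=0$.

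The paper takes a different, shorter route: it simply invokes Proposition~\ref{cardinalita' iperpiani quasi singolari}(3), which gives $|\cH_{\phi}|=\frac{(q^{n+1}-1)(q^{n-1}-1)}{(q-1)^2}$ as a black box (imported from~\cite{parte1}), computes the weight $N_{\sigma}-|\cH_{\phi}|=q^{n-1}(q^{n+1}-1)/(q-1)$, and then reads off $\theta_M=0$ from Corollary~\ref{weights}. Your approach is the reverse: you establish $\theta_M=0$ first, from which the cardinality and the weight follow. The paper's version is a one-line citation; yours is self-contained and explains \emph{why} $\theta_M$ vanishes, via the spread geometry and the parity of $n$, rather than inferring it after the fact from a cardinality formula. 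Either way, the conclusion that the weight attains the trivial upper bound (since $\theta_M\geq0$ always) is immediate.
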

\begin{proof}
  By Proposition~\ref{cardinalita' iperpiani quasi singolari},  $|\cH_{\phi}|=\frac{(q^{n+1}-1)(q^{n-1}-1)}{(q-1)^2}$.
  So, the weight of a codeword associated with $M^{\perp}$ is
  $q^{n-1}\frac{q^{n+1}-1}{q-1}$. It follows by
  Corollary~\ref{weights} that $\theta_M=0$. Hence $c_M$ is a maximum weight codeword of $\cC(\Lambda_{\sigma}).$
\end{proof}

Since $\theta_M\geq0$ by definition, in light of Corollary~\ref{weights}
the following is immediate.
\begin{corollary}
\label{co-mw}
  Suppose $\gcd(\frac{q^{n+1}-1}{q-1},s^j-1)>1$. Then
   the maximum weight of the codewords of $\cC(\Lambda_{\sigma})$
   is $w_{\max}=q^{n-1}\frac{q^{n+1}-1}{q-1}$.
   In particular, this happens for all $\sigma\in\Aut(\FF_q)$ when both $n$ and $q$ are odd.
 \end{corollary}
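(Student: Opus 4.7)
The plan is to derive the corollary directly from Corollary~\ref{weights} combined with the preceding lemma that produces an invertible matrix with $\theta_M = 0$ under the gcd hypothesis. By Corollary~\ref{weights}, every codeword has weight of the form $q^{n-1}\frac{q^{n+1}-1}{q-1} - q^{n-1}\theta_M$, and since $\theta_M \geq 0$ by its definition in~\eqref{teta}, the maximum weight of $\cC(\Lambda_\sigma)$ is bounded above by $q^{n-1}\frac{q^{n+1}-1}{q-1}$, with equality precisely when there exists $M \in M_{n+1}(q)$ attaining $\theta_M = 0$.

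The hypothesis $\gcd(\frac{q^{n+1}-1}{q-1}, s^j-1) > 1$ is exactly the condition of the preceding lemma, which exhibits an invertible $M$ with $\theta_M = 0$ by constructing a fixed-point-free $\sigma^{-1}$-semilinear collineation $\psi$ of $\PG(V^*)$ via multiplication by a suitable power of a primitive element. Hence the upper bound is achieved, establishing $w_{\max} = q^{n-1}\frac{q^{n+1}-1}{q-1}$.

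For the ``in particular'' clause, I would verify that the gcd condition is automatic when both $n$ and $q$ are odd. Since $s$ is a power of the characteristic of $\FF_q$, when $q$ is odd so is $s$, and therefore $s^j - 1$ is even. On the other hand, $\frac{q^{n+1}-1}{q-1} = 1 + q + q^2 + \cdots + q^n$ is a sum of $n+1$ odd summands; when $n$ is odd, $n+1$ is even and the sum is even. Thus $2$ divides both integers and the gcd is at least $2$, so the first part of the corollary applies for every $\sigma \in \Aut(\FF_q)$.

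There is no real obstacle here: the corollary is a packaging statement. All the technical work is performed in Corollary~\ref{weights} (which translates weights into the combinatorial quantity $\theta_M$) and in the constructive lemma immediately preceding (which realizes $\theta_M = 0$). The only care needed is the elementary parity check for the ``in particular'' clause.
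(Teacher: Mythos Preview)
Your proposal is correct and follows exactly the paper's approach: the paper states that the corollary is immediate from Corollary~\ref{weights} and the nonnegativity of $\theta_M$, invoking the preceding lemma for the existence of $M$ with $\theta_M=0$, and handles the ``in particular'' clause via Corollary~\ref{co-odd} with the same parity argument you spell out. Your write-up simply makes explicit the steps the paper leaves to the reader.
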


\subsection{Proof of Theorem~\ref{main thm 3}}\label{proof of main thm 3}
The computation of the length and dimension of $\cC(\Lambda_{\sigma})$ is straightforward from Section~\ref{Sec 1}. Indeed,  the length of $\cC(\Lambda_{\sigma})$ is the
number of point-hyperplane pairs $(p,H)$ of $\PG(n,q)$ with $p\in H,$ that is
\[N_{\sigma}=\frac{(q^{n+1}-1)(q^n-1)}{(q-1)^2}.\]
The dimension of $\cC(\Lambda_{\sigma})$ is the dimension of the embedding  $\bar{\vep}_{\sigma}$; so
\[ k_{\sigma}=(n+1)^2.\]

To determine the minimum distance, by Lemma~\ref{pesi} and Corollary~\ref{weights}, we need to find
the maximum of $\theta_M$ as $M$ varies in $M_{n+1}(q)$.
Hence, the minimum distance of $\cC(\Lambda_{\sigma})$ follows from Theorem~\ref{mtt} and we have \[ d_{\sigma}=\begin{cases}
	q^3-\sqrt{q}^3 & \text{ if }  \sigma^2=1 \text{ and } n=2, \\
	q^{2n-1}-q^{n-1} & \text{ if }  \sigma^2\neq 1 \text{ or } n>2.
\end{cases}
\].\hfill $\square$

\subsection{Proof of Theorem~\ref{main thm 4}}
\label{proof of main thm 4}
Point \ref{c4t0} of Theorem~\ref{main thm 4} follows from Proposition~\ref{c:min} and
Theorem~\ref{msub}.
Point \ref{pt4-13} of Theorem~\ref{main thm 4} follows from
Corollary~\ref{co-odd}, Corollary~\ref{co-mw} and Corollary~\ref{sph-w}. 

We remind that for $\sigma^2=1$ and $\sigma\neq 1$, the $\sigma$-norm function is defined as follows: $N\colon \FF_q\rightarrow \FF_s,\,\,N(x)=x^{s+1},$ where $\mathrm{Fix}(\sigma)=\FF_s,\,\,q=s^2.$

 \begin{theorem}
\label{mw:n2s1}
Suppose $n=2$, $\sigma^2=1$ and $\sigma\neq1$ and let $M\in M_{n+1}(q)\setminus\{0\}.$
The matrix $M$ defines a minimum weight codeword of
$\cC(\Lambda_{\sigma})$ if and only if there exist three linearly
independent vectors $\xi_1,\xi_2,\xi_3\in V^*$
and three non-null scalars $\alpha,\beta,\gamma\in\FF_q\setminus\{0\}$
such that
\begin{equation}
  \label{Cn2}
  \xi_1M=\alpha\xi_1^{\sigma},\qquad
  \xi_2M=\beta\xi_2^{\sigma},\qquad
  \xi_3M=\gamma\xi_3^{\sigma},\qquad
  N(\alpha)=N(\beta)=N(\gamma).
  \end{equation}
\end{theorem}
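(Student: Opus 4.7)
The plan is to prove both implications by reducing the question to the structure of the fixed-point set of a semi-linear collineation of $\PG(V^*)$. The first step is set-up: by Corollary~\ref{weights} together with Theorem~\ref{mtt}, a matrix $M$ yields a minimum weight codeword precisely when $\theta_M = m(3) = s^2+s+1$. Since $m(1)=s^2+2$ and $m(2)=s+2$ are both strictly smaller than $m(3)$ whenever $s\geq 2$, Theorem~\ref{m(r)} forces $M$ to have rank $3$, so the defining condition $[\xi^\sigma]\subseteq[\xi M]$ simplifies to the equality $[\xi^\sigma]=[\xi M]$. Writing $F:=\{[\xi]\in\PG(V^*):\xi M=\lambda_\xi\xi^\sigma\}$, the theorem amounts to characterizing invertible $M$ with $|F|=s^2+s+1$.

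For the ``if'' direction I would start from the three given vectors $\xi_1,\xi_2,\xi_3$. The first move is to rescale them so that a single scalar $\lambda$ works for all three: replacing $\xi_i$ by $\mu_i\xi_i$ changes $\alpha_i$ to $\alpha_i\mu_i^{1-s}$, and since the image of $x\mapsto x^{s-1}$ on $\FF_q^*$ is exactly $\ker N$, the hypothesis $N(\alpha_i)=N(\alpha_j)$ is precisely what allows the equation $\mu_i^{s-1}=\alpha_i/\alpha_1$ to be solved, giving $\xi_iM=\alpha_1\xi_i^\sigma$ after the rescaling. A routine verification then shows that every $\FF_s$-combination $\zeta=a\xi_1+b\xi_2+c\xi_3$ satisfies $\zeta M=\alpha_1\zeta^\sigma$, using that $\FF_s$ is pointwise fixed by $\sigma$. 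This exhibits a copy of $\PG(2,s)$, having $s^2+s+1$ points, inside $F$; combined with the upper bound from Theorem~\ref{mtt}, equality is forced.

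For the ``only if'' direction I would begin by noting that $F$ cannot be contained in a $2$-dimensional subspace of $V^*$, since Theorem~\ref{t:45} would then bound $|F|$ by $s+1$; hence one can select three linearly independent $[\xi_1],[\xi_2],[\xi_3]\in F$, and the task reduces to proving $N(\alpha_1)=N(\alpha_2)=N(\alpha_3)$. The central trick is to consider $\vartheta^2$, where $\vartheta$ is the collineation of~\eqref{collineation}: since $\sigma^2=1$, a direct computation gives $\vartheta^2([\zeta])=[\zeta\,M^\sigma M]$, a genuinely linear projective collineation of $\PG(2,q)$. Since $F\subseteq\mathrm{Fix}(\vartheta^2)$ and $|F|=s^2+s+1>q+2$ for $s\geq 2$, while any non-identity linear projective collineation of $\PG(2,q)$ has at most $q+2$ fixed points (the homology case), $\vartheta^2$ must be the identity. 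Equivalently, $M^\sigma M=\mu I$ for some $\mu\in\FF_q^*$, and $\mu\in\FF_s$ follows by applying $\sigma$ and using $M^\sigma=\mu M^{-1}$.

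The norm condition then falls out mechanically: apply $\sigma$ to $\xi_iM=\alpha_i\xi_i^\sigma$ and multiply by $M$ on the right to get $\xi_i^\sigma(M^\sigma M)=\alpha_i^\sigma\alpha_i\xi_i^\sigma=N(\alpha_i)\xi_i^\sigma$, while the left-hand side equals $\mu\xi_i^\sigma$, so $N(\alpha_i)=\mu$ uniformly in $i$. I expect the main obstacle of the proof to be the passage from ``$|F|$ large'' to ``$\vartheta^2$ is the identity'', which relies on the numerical inequality $s^2+s+1>q+2$; fortunately this holds exactly when $\sigma\neq 1$ (so $s\geq 2$), which is part of the hypotheses. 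Once $M^\sigma M$ is known to be scalar, both the recovery of the norm equality and the sufficiency argument reduce to direct algebraic manipulations.
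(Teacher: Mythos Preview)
Your proof is correct, but it takes a genuinely different route from the paper's own argument.

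For the ``if'' direction, the paper counts the solutions of the system~\eqref{sistema} directly, splitting into the cases $[\zeta]\notin\Delta$, $[\zeta]$ on an edge of the triangle $\Delta$, and $[\zeta]$ a vertex, to obtain $(s-1)^2+3(s-1)+3=s^2+s+1$. Your approach instead normalises the three vectors so that a single scalar works for all of them, and then simply exhibits the $\FF_s$-span as a copy of $\PG(2,s)$ inside $F$. This is shorter and more conceptual; the paper's count is more pedestrian but entirely self-contained.

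For the ``only if'' direction, the paper argues combinatorially: since $3s<s^2+s+1$, some $[\zeta]\in F$ must lie outside the triangle $\Delta$; writing out the system~\eqref{sistema} for such a $\zeta$ with all coordinates nonzero yields $\alpha^{-1}\beta,\alpha^{-1}\gamma\in\SSS=\ker N$ directly. Your approach is structural: you square the semilinear collineation $\vartheta_{M,\sigma}$ to obtain a \emph{linear} collineation given by $M^{\sigma}M$, observe that its fixed set contains at least $s^2+s+1>q+2$ points, and conclude $M^{\sigma}M=\mu I$; the norm identities then drop out of $\xi_i^{\sigma}M^{\sigma}M=N(\alpha_i)\xi_i^{\sigma}$. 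This gives more: you recover the extra structural fact that $M^{\sigma}M$ is scalar, which the paper does not isolate. One small caution: strictly speaking it is $F^{\sigma}$, not $F$, that sits inside $\mathrm{Fix}(\vartheta_{M,\sigma}^2)$ (equivalently, $F$ is fixed by the collineation associated with $MM^{\sigma}$ rather than $M^{\sigma}M$); this does not affect the cardinality argument, but be careful when you write it up.
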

\begin{proof}
Suppose $[M^{\perp}]$ defines a minimum weight codeword of
$\cC(\Lambda_{\sigma}).$ By Theorem~\ref{mtt}, $M$ is an invertible $3\times 3$-matrix with $\theta_M=m(3)=s^2+s+1.$
Define $U:=\{\xi\in V^*\colon [\xi^{\sigma}]=[\xi M]\}.$ By the definition~(\ref{teta}) of $\theta_M$, we have $\theta_M=\frac{|U|-1}{q-1}.$

If $\dim(\langle U \rangle)\leq 2,$ then by Theorem~\ref{t:45} we have $\frac{|U|-1}{q-1}\leq s+1$, which contradicts the maximality of $\theta_M$ by Theorem~\ref{mtt}. Hence $U$ spans $V^*,$ i.e. there exist at least three linearly
independent vectors $\xi_1,\xi_2,\xi_3\in U$ and three scalars $\alpha,\beta,\gamma\in \FF_q\setminus\{0\}$ such that
  $\xi_1M=\alpha \xi_1^{\sigma}$,  $\xi_2M=\beta \xi_2^{\sigma}$,  $\xi_3M=\gamma \xi_3^{\sigma}.$ We now prove that $N(\alpha)=N(\beta)=N(\gamma).$

If no vector of $\langle U\rangle$ different from elements of $[\xi_i],\,\,i=1,2,3,$ is contained in $U$ then $\frac{|U|-1}{q-1}=|\{[\xi_1],[\xi_2],[\xi_3]\}|=3,$ which contradicts again the maximality of $\theta_M$ by Theorem~\ref{mtt}.
Hence there exists at least one vector $\zeta\in U$ which is not a multiple of any $\xi_i,\,\,i=1,2,3$.
So there is vector $\zeta\in U$ such that $[\zeta^{\sigma}]=[\zeta M]$ and
\[\zeta=a_1\xi_1+a_2\xi_2+a_3\xi_3\]
for some scalars $a_1,a_2,a_3,$ at least two of which are not null.
This implies that there exists  $\lambda\in \FF_q\setminus \{0\}$ such that
      \begin{equation}\label{sistema}
\left\{        \begin{array}{lll}
        a_1^{\sigma}=\lambda a_1\alpha\\
        a_2^{\sigma}=\lambda a_2\beta\\
        a_3^{\sigma}=\lambda a_3\gamma.\\
        \end{array}\right.
      \end{equation}
Denote now by $\Delta:=[
        \langle \xi_1,\xi_2\rangle]\cup [\langle \xi_1,\xi_3\rangle]
        \cup[\langle \xi_2,\xi_3\rangle] $ the triangle  of $\PG(V^*)$ whose sides are the lines spanned by $[\xi_i]$ and $[\xi_j],\,i\not=j,\,\,i,j\in \{1,2,3\}.$


Suppose by way of contradiction that all non-null vectors $\zeta\in U$ have the property that the associated projective points $[\zeta]$ are contained in $\Delta.$ Then, by Lemma~\ref{c:2}, we have
$\frac{|U|-1}{q-1}\leq 3s<s^2+s+1=m(3)$, which contradicts again the maximality of $\theta_M=\frac{|U|-1}{q-1}$ by Theorem~\ref{mtt}.

Hence there exists at least one non-null vector $\zeta\in U$  such that $[\zeta]\not\in\Delta$, i.e. there exist three not null scalars  $a_1,a_2,a_3$ and $\lambda\in \FF_q\setminus \{0\}$ satisfying~\eqref{sistema}.
This implies that $a_1a_2a_3\neq0.$  We can assume without
      loss of generality $a_1=1$.  Solving~\eqref{sistema} in $\lambda$, we get
      \begin{equation}
        \label{cond}
        \alpha^{-1}=a_2^{\sigma-1}\beta^{-1}=a_3^{\sigma-1}\gamma^{-1}.
      \end{equation}

        Let $\SSS=\{ x^{\sigma-1}: x\in\FF_q \}$ be the
     kernel of the norm function.
      If $\alpha^{-1}\beta\not\in\SSS$ or
      $\alpha^{-1}\gamma\not\in\SSS$
      we immediately see that there are no possible
      solutions to~\eqref{sistema} in the unknowns $a_2,a_3$   and this contradicts the existence of the scalars $a_2,a_3$ such that $\zeta\in U$. So,
      $\alpha^{-1}\beta\in\SSS$ and $\alpha^{-1}\gamma\in\SSS$, i.e., $N(\alpha^{-1}\beta)=1$ and $N(\alpha^{-1}\gamma)=1$ whence
$ N(\alpha)=N(\beta)=N(\gamma).$

Conversely, suppose there exist three linearly
independent vectors $\xi_1,\xi_2,\xi_3\in V^*$
and three non-null scalars $\alpha,\beta,\gamma\in\FF_q\setminus\{0\}$
such that~\eqref{Cn2} hold.  Then $M$ is invertible. We will show that $\theta_M=s^2+s+1$, hence the thesis would follow from  Theorem~\ref{mtt}.

Take an arbitrary vector $\zeta\in V^*$ as $\zeta=a_1\xi_1+a_2\xi_2+a_3\xi_3$ with $a_1,a_2,a_3\in \FF_q$. To compute $\theta_M$, we need to count the number of points $[\zeta]$ such that $\zeta\in U.$  This is equivalent to compute the number of solutions in the unknowns $a_1,a_2,a_3,\lambda$ of the system~\eqref{sistema}.
Put $\Delta:=\bigcup_{1\leq i< j\leq 3}\langle [\xi_i],[\xi_j]\rangle.$

Suppose $[\zeta]\not\in \Delta.$ Then $a_1a_2a_3\not= 0.$ Without loss of generality, put $a_1=1$, hence $\lambda=\alpha^{-1}$ and the system~\eqref{sistema} becomes

\begin{equation}\label{sistema-ter}
	\left\{        \begin{array}{lll}
		a_2^{\sigma -1}= \alpha^{-1} \beta\\
		a_3^{\sigma -1}=\alpha^{-1} \gamma.\\
	\end{array}\right.
\end{equation}
Since by hypothesis $N(\alpha^{-1} \beta)=N(\alpha^{-1} \gamma)=1$, the above system is solvable and each of the equations of~\eqref{sistema-ter} has exactly $s-1$ solutions.
So, there exist precisely $(s-1)^2=s^2-2s+1$ points $[\zeta]\in \PG(V^*)\setminus \Delta$ with $\zeta\in U.$

Suppose $[\zeta]\in \Delta\setminus\{[\xi_1],[\xi_2],[\xi_3]\}.$   Without loss of generality, assume $[\zeta]\in \langle [\xi_2],[\xi_3]\rangle$. i.e. $a_1=0$ and take $a_2=1.$ Then the system~\eqref{sistema} becomes
\begin{equation}\label{sistema-quater}
			a_3^{\sigma -1}=\beta^{-1} \gamma.
	\end{equation}
Since $N(\beta^{-1} \gamma)=1$, the above equation is solvable and has exactly $s-1$ solutions.

Thus, there exist precisely $(s-1)$ points $[\zeta]\not\in \{[\xi_2],[\xi_3]\}$ on the line $[\langle \xi_2, \xi_3\rangle]$ with $\zeta\in U.$
The same argument applies to the lines $[\langle\xi_1,\xi_2\rangle]$ and $[\langle\xi_1,\xi_3\rangle]$. So,
\[\frac{|U|-1}{q-1}=\theta_M= (s-1)^2+3(s-1)+3=s^2+s+1.\]
By Theorem~\ref{mtt}, $\theta_M=m(3).$ So, the theorem is proved.
\end{proof}
Part~\ref{c4t1} of Theorem~\ref{main thm 4} is exactly Theorem~\ref{mw:n2s1} and Part 3 of Theorem~\ref{main thm 4} is the following theorem.
\begin{theorem}
\label{mw:n>2}
  Suppose $n>2$ or $\sigma^2\neq1$ and let $M\in M_{n+1}(q)\setminus\{0\}.$
  The matrix $M$ defines a minimum weight codeword of
  $\cC(\Lambda_{\sigma})$ if and only if ${\bar{\vep}_{\sigma}}^{-1}(M^{\perp})$ is a quasi-singular but not a singular hyperplane of $\lrootG$.

  The hyperplane $[M^{\perp}]$ defines a second minimum weight codeword of
  $\cC(\Lambda_{\sigma})$ if and only if ${\bar{\vep}_{\sigma}}^{-1}(M^{\perp})$ is a singular hyperplane of $\lrootG$.
 \end{theorem}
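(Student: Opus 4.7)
The plan is to reduce the problem, via Corollary~\ref{weights}, to finding the two largest values of $\theta_M$ and then to identify the matrices $M$ achieving them.

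By Corollary~\ref{weights}, $\wt(c_M)=q^{n-1}\bigl(\tfrac{q^{n+1}-1}{q-1}-\theta_M\bigr)$, so minimum weight codewords correspond to $\theta_M$ maximal and second-lowest-weight codewords to $\theta_M$ second-largest. Theorem~\ref{mtt} already gives the maximum as $m(1)=\tfrac{q^n-1}{q-1}+1$. The first step is to show that, under the hypothesis $n>2$ or $\sigma^2\neq 1$, every $M$ with $\theta_M\geq\tfrac{q^n-1}{q-1}$ has rank $1$, and consequently that the second-largest value of $\theta_M$ is exactly $\tfrac{q^n-1}{q-1}$. By Theorem~\ref{m(r)}, $\theta_M\leq m(r)$ with $r=\rank(M)$. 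Since $m(r)$ is convex on $\{1,\dots,n+1\}$ (as observed in the proof of Theorem~\ref{mtt}), its maximum over $r\geq 2$ is attained either at $r=2$ or at $r=n+1$. A direct computation using $q=s^t$ with $t\geq 2$ (and $t\geq 3$ when $n=2$) gives $\tfrac{q^n-1}{q-1}-m(2)=q^{n-1}-s-1>0$ and $\tfrac{q^n-1}{q-1}-m(n+1)>0$, by comparing the geometric sums involved. Hence $\theta_M\geq\tfrac{q^n-1}{q-1}$ forces $\rank(M)=1$.

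For a rank-$1$ matrix $M=x\xi$, the next step is to compute $\theta_M$ explicitly. Since $\eta M=\eta(x)\,\xi$, the left kernel $\{\eta\in V^*:\eta M=0\}$ equals $\{\eta:\eta(x)=0\}$, contributing $\tfrac{q^n-1}{q-1}$ projective points to $\theta_M$ (Case 3 in the proof of Lemma~\ref{pesi}). For $\eta\notin\ker(M)$, the equation $[\eta^\sigma]=[\eta M]=[\xi]$ is equivalent to $[\eta]=[\xi^{\sigma^{-1}}]$, a single point, and this point contributes to $\theta_M$ exactly when it lies outside $\ker(M)$, that is, when $\xi^{\sigma^{-1}}(x)\neq 0$. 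Therefore
\[ \theta_M=\tfrac{q^n-1}{q-1}+\begin{cases}1 & \text{if } \xi^{\sigma^{-1}}(x)\neq 0,\\ 0 & \text{if } \xi^{\sigma^{-1}}(x)=0.\end{cases} \]

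To conclude, I would translate these two cases into the geometric statement. Using Proposition~\ref{prop} together with the identity $\xi(y^\sigma)=\bigl(\xi^{\sigma^{-1}}(y)\bigr)^\sigma$, one obtains
\[ \bar{\vep}_\sigma^{-1}([M^\perp])=\{([y],[\eta])\in\lrootG:\eta(x)\,\xi(y^\sigma)=0\}=\{([y],[\eta])\in\lrootG:\eta(x)=0\ \text{or}\ \xi^{\sigma^{-1}}(y)=0\}, \]
which is precisely the quasi-singular hyperplane $\cH_{[x],[\xi^{\sigma^{-1}}]}$ defined in~\eqref{quasi-singular}. It is singular exactly when $[x]\in[\xi^{\sigma^{-1}}]$, i.e.\ when $\xi^{\sigma^{-1}}(x)=0$. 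Combined with the previous computation of $\theta_M$, this yields both equivalences of the theorem. The main obstacle is the arithmetic comparison in the first step: the exclusion of the case $n=2$, $\sigma^2=1$ is essential, because there $m(3)>m(1)$ and a genuinely different analysis is required, carried out in Theorem~\ref{mw:n2s1}.
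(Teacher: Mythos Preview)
Your proof is correct and takes a somewhat different route from the paper's. The paper argues via the pre-computed cardinalities of quasi-singular and singular hyperplanes (Proposition~\ref{cardinalita' iperpiani quasi singolari}) together with Proposition~\ref{qsh}, whereas you compute $\theta_M$ directly for a rank-$1$ matrix $M=x\xi$ and then explicitly identify $\bar{\vep}_\sigma^{-1}([M^\perp])$ with the quasi-singular hyperplane $\cH_{[x],[\xi^{\sigma^{-1}}]}$. Your approach is more self-contained: it avoids importing cardinality formulas from the companion paper and makes transparent how the twist by $\sigma$ shifts the correspondence between rank-$1$ matrices and quasi-singular hyperplanes (the matrix $x\xi$ yields $\cH_{[x],[\xi^{\sigma^{-1}}]}$ rather than $\cH_{[x],[\xi]}$). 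You are also more careful in the reduction to rank $1$: the paper asserts $\theta_M\leq m(2)$ whenever $\rank(M)\geq 2$, but Theorem~\ref{m(r)} only gives $\theta_M\leq m(\rank(M))$; your explicit check of both endpoints $r=2$ and $r=n+1$, combined with the convexity observed in the proof of Theorem~\ref{mtt}, is what is actually required. The paper's route, on the other hand, has the advantage of highlighting the parallel with the untwisted case treated in~\cite{parte1}.
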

\begin{proof}
Suppose $\cH$ is a quasi-singular hyperplane of $\lrootG$.  By Proposition~\ref{qsh}, $\cH={\bar{\vep}_{\sigma}}^{-1}(M^{\perp}),$ with $M$ a non-null $(n+1)$-matrix of rank $1$ and $|\cH|=\frac{(q^{n+1}-1)(q^{n-1}-1)}{(q-1)^2}+\frac{q^n-1}{q-1} q^{n-1}$ if $\cH$ is a singular hyperplane or $|\cH|=\frac{(q^{n+1}-1)(q^{n-1}-1)}{(q-1)^2}+(\frac{q^n-1}{q-1} +1)q^{n-1}$ if $\cH$ is a quasi-singular but not singular hyperplane of $\lrootG$ by Proposition~\ref{cardinalita' iperpiani quasi singolari}. By the injective property of the embedding $\bar{\vep}_{\sigma}$, we have $|[M^{\perp}]\cap \Lambda_{\sigma}|=|{\bar{\vep}_{\sigma}}^{-1}(M^{\perp})|.$ If we consider the codeword $c_M\in \cC(\Lambda_{\sigma})$ defined by the matrix $M$, we know by Corollary~\ref{pesi} that the weight of $c_M$ is $wt(c_M)=N_{\sigma}-|[M^{\perp}]\cap \Lambda_{\sigma}|=N_{\sigma}-|\cH|.$

By Theorem~\ref{mtt} and  Corollary~\ref{weights}, comparing the cardinalities of a singular hyperplane and a quasi-singular non-singular hyperplane of $\lrootG$, we immediately see that the minimum weight codewords are associated to a quasi-singular non-singular hyperplane $\vep_{\sigma}^{-1}(M^{\perp})$ of $\lrootG$ with $\theta_M=\frac{q^n-1}{q-1}+1$.
As the singular hyperplanes of $\lrootG$ yield codewords with $\theta_M=\frac{q^n-1}{q+1}$, they correspond to
codewords with the second lowest weight.

Conversely, let  $[M^{\perp}]$ define a minimum weight codeword of
$\cC(\Lambda_{\sigma})$;
then $\theta_M=m(1)=\frac{q^n-1}{q-1} +1$ and, by Theorem~\ref{mtt}, $M$ is  a rank $1$ matrix.
 By Proposition~\ref{qsh}, ${\bar{\vep}_{\sigma}}^{-1}(M^{\perp})$ is a quasi-singular hyperplane of $\lrootG$.
By Lemma~\ref{pesi}, the weight of the minimum weight codewords corresponds  to the maximal cardinality of  the set $[M^{\perp}]\cap \Lambda_{\sigma}.$ Since $|[M^{\perp}]\cap \Lambda_{\sigma}|=|{\bar{\vep}_{\sigma}}^{-1}(M^{\perp})|$,  the thesis now follows from Proposition~\ref{cardinalita' iperpiani quasi singolari}.

Suppose now $[M^{\perp}]$ defines a second lowest weight codeword of
$\cC(\Lambda_{\sigma})$. Then, by Theorem~\ref{mtt} and Corollary~\ref{weights}, its  weight corresponds to $\theta_M=m(1)-1=\frac{q^n-1}{q-1}.$ We
first show that $M$ has rank $1$.

By way of contradiction suppose $\rank(M)\geq 2.$ Then by Theorem~\ref{m(r)}, $\theta_M\leq m(2)$. By Theorem~\ref{m(r)},  $m(2)=\frac{q^{n-1}-1}{q-1} +\frac{s^2-1}{s-1}$ and $m(1)-1=\frac{q^n-1}{q-1},$ but this contradicts $(m(1)-1)=\theta_M\leq m(2).$
 Hence $\rank(M)=1$. The thesis now follows by Proposition~\ref{qsh} and Proposition~\ref{cardinalita' iperpiani quasi singolari}.
   \end{proof}

As a consequence of Theorem~\ref{mw:n>2} we point out that the minimum weight codewords and the second minimum weight codewords of $\cC(\Lambda_{\sigma})$  depend on the properties of the hyperplanes of  $\lrootG$ and not on the way it is embedded into a projective space.

\subsection{Proof of Theorem~\ref{main thm 5}}
\label{proof of main thm 5}



Relying on the representation of the codewords of $\cC(\Lambda_{\sigma})$ given by~\eqref{triv-eq}
  of Lemma~\ref{triv-lem}, we see that $\varrho(g)$ maps an
  element of $\cC(\Lambda_{\sigma})$ to an element of $\cC(\Lambda_{\sigma})$, by the rule
  \begin{multline*}
    c_M=(\Tr(X_1M),\dots,\Tr(X_NM))\to \\
    c_{g^{-1}Mg^{\sigma}}=
    (\Tr(X_1g^{-1}Mg^{\sigma}),\dots,\Tr(X_Ng^{-1}Mg^{\sigma})).
  \end{multline*}
  Furthermore,
  $\varrho(g)$ is a linear transformation of $\cC(\Lambda_{\sigma})$
  since
  \begin{multline*}
    \varrho(g)(\alpha c_A+\beta c_B)=\varrho(g)(c_{\alpha A+\beta B})= \\
    =\varrho(g)(\Tr(X_1(\alpha A+\beta B)),\dots,\Tr(X_N(\alpha A+\beta B)))= \\
    =(\Tr(X_1g^{-1}(\alpha A+\beta B)g^{\sigma}),\dots,\Tr(X_Ng^{-1}(\alpha A+\beta B)g^{\sigma}))=\\
    \alpha(\Tr(X_1g^{-1}Ag^{\sigma}),\dots,\Tr(X_Ng^{-1}Ag^{\sigma}))+\\
    \beta (\Tr(X_1g^{-1}Bg^{\sigma}),\dots,\Tr(X_Ng^{-1}Bg^{\sigma}))=\\
    =\alpha c_{g^{-1}Ag^{\sigma}}+\beta c_{g^{-1}Bg^{\sigma}}=
    \alpha \varrho(g)(c_A)+
    \beta \varrho(g)(c_B).
  \end{multline*}

  There remains to prove that $\varrho(g)$ is an isometry. To this
  purpose observe that by the cyclic property of the trace,
  \[ \Tr(X_i g^{-1}Mg^{\sigma})=\Tr(g^{\sigma}X_ig^{-1} M), \]
  for all $i=1,\dots,N$ and all $M\in M_{n+1}(q)$.
  On the other hand,
  $[g^{\sigma}X_ig^{-1}]=[X_i]^g=[X_{\pi(g)(i)}]$ by~\eqref{action-L};
  consequently, there is a function
  $\lambda:\GL(n+1,q)\times\{1,\dots,N\}\to\FF_q^{\star}$
  depending on $g$ and $i$ such that $g^{\sigma}X_ig^{-1}=\lambda(g,i) X_{\pi(g)(i)}$.
  Thus, we can write
  \[
    \Tr(X_ig^{-1}Mg^{\sigma})=\lambda(g,i)\Tr(X_{\pi(g)(i)}M),
  \]
  for all $i=1,\dots,N$.
  In particular, the components of $c_M^g$ are obtained by permuting
  and multiplying by the scalars $\lambda(g,i)$ the components of $c_M$.
  Since $\lambda(g,i)\neq0$, the number of null components in
  $c_M^g$, i.e. the number of $i$ such that
  $\Tr(X_ig^{-1}Mg^{\sigma})=0$, is the same as the number
  of null components in $c_M$.
  This proves that the weight of $c_M^g$ is the same as
  that of $c_M$, i.e. that $\varrho(g)$ is a isometry.

  To conclude, observe that $g^{-1}Mg^{\sigma}=M$ for all $M$ implies,
  as a special case when $M=I$, that $g^{-1}g^{\sigma}=1$, i.e.
  $g^{\sigma}=g$. This happens if and only if $g$ is a matrix
  with entries over $\FF_{s}$. On the other hand,
  $Mg=g^{-1}M$ for all $M$ implies that $g$ must be a scalar matrix.
  It follows that the kernel of the action is $K_{\sigma}$. \hfill $\square$
\begin{remark}
  The action of $\GL(n+1,q)$ on $M_{n+1}(q)$ described in Theorem~\ref{main thm 5} is
  not the action of $\GL(n+1,q)$ on $\Lambda_{\sigma}$ which lifts through $\bar{\vep}_{\sigma}$ which is instead $X^g:=g^{\sigma}Xg^{-1}$.
 These two actions are adjoint with respect to the saturation form,
  in the sense that if $f$ is the saturation form,
  $f(X^g,M)=\Tr(g^{\sigma}Xg^{-1}M)=\Tr(Xg^{-1}Mg^{\sigma})=f(X,M^g)$.

\end{remark}

Authors' addresses:
\vskip.4cm\noindent\nobreak
\begin{minipage}[t]{8cm}
\small{Ilaria Cardinali, \\
Dep. Information Engineering and Mathematics \\University of Siena\\
Via Roma 56, I-53100 Siena, Italy\\
ilaria.cardinali@unisi.it }
\end{minipage}
\hfill
\begin{minipage}[t]{6cm}
\small{Luca Giuzzi\\
D.I.C.A.T.A.M. \\
Universit\`a di Brescia\\
Via Branze 43, I-25123 Brescia, Italy \\
luca.giuzzi@unibs.it}
\end{minipage}

\end{document}